\theoremstyle{plain}
\newtheorem{theorem}{Theorem}[section]
\newtheorem{lemma}[theorem]{Lemma}
\newtheorem{question}[theorem]{Question}
\newtheorem{remark}[theorem]{Remark}
\newtheorem{corollary}[theorem]{Corollary}
\newtheorem{proposition}[theorem]{Proposition}
\theoremstyle{definition}
\newtheorem{definition}[theorem]{Definition}
\newtheorem{examples}[theorem]{Examples}
\newcommand{\betrag}[1]{\vert{#1}\vert}
\newcommand{\height}[1]{{\rm{ht}}(#1)}
\newcommand{\lub}{{\rm{lub}}}
\newcommand{\dom}[1]{{{\rm{dom}}(#1)}}
\newcommand{\otp}[1]{{{\rm{otp}}\left(#1\right)}}
\newcommand{\ran}[1]{{{\rm{ran}}(#1)}}
\newcommand{\supp}[1]{{{\rm{supp}}(#1)}}
\newcommand{\length}[2]{{\rm{lh}_{{#2}}}\left({#1}\right)}
\newcommand{\POT}[1]{{\mathcal{P}}({#1})}
\newcommand{\map}[3]{{#1}:{#2}\longrightarrow{#3}}
\newcommand{\Map}[5]{{#1}:{#2}\longrightarrow{#3};~{#4}\longmapsto{#5}}
\newcommand{\pmap}[4]{{#1}:{#2}\xrightarrow{#4}{#3}}
\newcommand{\Set}[2]{\left\{{#1}~\vert~{#2}\right\}}
\newcommand{\seq}[2]{\langle{#1}~\vert~{#2}\rangle}
\newcommand{\anf}[1]{{\text{``}\hspace{0.3ex}{#1}\hspace{0.3ex}\text{''}}}
\newcommand{\HH}[1]{{\rm{H}}(#1)}
\newcommand{\Add}[2]{{\rm{Add}}({#1},{#2})}
\newcommand{\id}{{\rm{id}}}
\newcommand{\Lim}{{\rm{Lim}}}
\newcommand{\LL}{{\rm{L}}}
\newcommand{\ZFC}{{\rm{ZFC}}}
\newcommand{\PPP}{{\mathbb{P}}}
\newcommand{\QQQ}{{\mathbb{Q}}}
\newcommand{\TTT}{{\mathbb{T}}}
\newcommand{\PP}{{\mathcal{P}}}
\newcommand{\VV}{{\rm{V}}}
\newcommand{\calC}{\mathcal{C}}
\newcommand{\calF}{\mathcal{F}}
\newcommand{\calU}{\mathcal{U}}
\newcommand{\CF}[2]{{\mathcal{D}}_{{#1}}^{{#2}}}
\begin{document}

\title[Large cardinals and layering]{Characterizing large cardinals in terms of layered posets}

\author{Sean Cox}
\address{
Department of Mathematics and Applied Mathematics \\
Virginia Commonwealth University \\
1015 Floyd Avenue \\
Richmond, Virginia 23284, USA 
}
\email{scox9@vcu.edu}

\author{Philipp L\"ucke}
\address{
Mathematisches Institut \\ 
Rheinische Friedrich-Wilhelms-Universit\"at Bonn \\ 
En\-de\-nicher Allee 60 \\
53115 Bonn \\
Germany 
}
\email{pluecke@uni-bonn.de}

\thanks{This work was partially supported by a grant from the Simons Foundation (\#318467 to Sean Cox) and a grant from the Deutsche Forschungsgemeinschaft (LU2020/1-1 to Philipp L\"ucke). The main results of this paper were obtained during a visit of the second author at VCU in March 2015. The second author would like to thank the first author for his hospitality during his stay in Richmond and the Simons Foundation for the financial support of his visit through the above grant. Finally, the authors would like to thank the anonymous referee for the careful reading of the manuscript.}

\subjclass[2010]{03E05, 03E55, 06A07} 
\keywords{Weakly compact cardinals, layered posets, productivity of chain conditions, Knaster property,  specialization forcings for trees.}

\begin{abstract}
 Given an uncountable regular cardinal $\kappa$, a partial order is $\kappa$-stationarily layered if the collection of regular suborders of $\PPP$ of cardinality less than $\kappa$ is stationary in $\PP_\kappa(\PPP)$. 
 We show that weak compactness can be characterized by this property of partial orders by proving that an uncountable regular cardinal $\kappa$ is weakly compact if and only if every partial order satisfying the $\kappa$-chain condition is $\kappa$-stationarily layered. 
 We prove a similar result for strongly inaccessible cardinals. 
Moreover, we show that the statement that all $\kappa$-Knaster partial orders are $\kappa$-stationarily layered implies that $\kappa$ is a Mahlo cardinal and every stationary subset of $\kappa$ reflects. This shows that this statement characterizes weak compactness in canonical inner models. In contrast, we show that it is also consistent that this statement holds at a non-weakly compact cardinal.  
\end{abstract}

\maketitle


\section{Introduction}\label{sec_Intro}

Since the results presented in this paper are motivated by classical questions on the \emph{productivity of chain conditions} in partial orders, we start with a short introduction to this topic (longer introduction can be found in \cite{MR1393943} and \cite{MR3271280}). 
Given an uncountable regular cardinal $\kappa$, we let $\calC_\kappa$ denote the statement that the product of two partial orders satisfying the $\kappa$-chain condition again satisfies the $\kappa$-chain condition. Then $\calC_\kappa$ implies the non-existence of $\kappa$-Souslin trees and $\mathrm{MA}_{\aleph_1}$ implies $\calC_{\aleph_1}$. In particular, the statement $\calC_{\aleph_1}$ is independent from the axioms of $\ZFC$. A folklore argument shows that $\calC_\kappa$ holds if $\kappa$ is weakly compact. A small modification of this argument yields the statement of the following proposition. 
Recall that, given an uncountable regular cardinal $\kappa$, a partial order is $\kappa$-Knaster if every $\kappa$-sized collection of conditions can be refined to a $\kappa$-sized  set of pairwise compatible conditions. Given an infinite cardinal $\nu$ and a sequence $\seq{\PPP_\gamma}{\gamma<\lambda}$, the corresponding  \emph{$\nu$-support product} consists of all elements $\vec{p}$ of the full product of these partial orders such that the set $\supp{\vec{p}\hspace{0.9pt}}=\Set{\gamma<\lambda}{\vec{p}(\gamma)\neq\mathbbm{1}_{\PPP_\gamma}}$ has cardinality at most $\nu$.

\begin{proposition}\label{proposition:WCyieldsCkappa}
 If $\kappa$ is a weakly compact cardinal and $\nu<\kappa$, then $\nu$-support products of partial orders satisfying the $\kappa$-chain condition are $\kappa$-Knaster. 
\end{proposition}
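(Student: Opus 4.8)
The plan is to reduce pairwise compatibility in the $\nu$-support product to a single simultaneous homogeneity statement that is handled by the partition property of weakly compact cardinals. Write $\PPP$ for the $\nu$-support product of posets $\seq{\PPP_\gamma}{\gamma<\lambda}$, each satisfying the $\kappa$-chain condition, and fix a sequence $\seq{\vec p_\alpha}{\alpha<\kappa}$ of conditions in $\PPP$; I want to find $I\in[\kappa]^\kappa$ whose members are pairwise compatible. First I would record the two features of $\kappa$ that I will use, both consequences of weak compactness: $\kappa$ is inaccessible, so $2^\nu<\kappa$ and more generally $\mu^\nu<\kappa$ for all $\mu<\kappa$; and $\kappa$ satisfies the partition relation $\kappa\rightarrow(\kappa)^2_\theta$ for every $\theta<\kappa$.

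Since each $\supp{\vec p_\alpha}$ has size at most $\nu$ and $\kappa$ is inaccessible, the generalized $\Delta$-system lemma applies and yields a set $I_0\in[\kappa]^\kappa$ such that $\Set{\supp{\vec p_\alpha}}{\alpha\in I_0}$ forms a $\Delta$-system with some root $r$ of size at most $\nu$. The effect of thinning to $I_0$ is that, for distinct $\alpha,\beta\in I_0$, the supports of $\vec p_\alpha$ and $\vec p_\beta$ meet only inside $r$; hence on every coordinate outside $r$ at least one of the two conditions is trivial, and $\vec p_\alpha,\vec p_\beta$ are compatible in $\PPP$ if and only if $\vec p_\alpha(\gamma)$ and $\vec p_\beta(\gamma)$ are compatible in $\PPP_\gamma$ for each $\gamma\in r$. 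This last equivalence is where I would use that $\nu$ is infinite: a coordinatewise common lower bound has support contained in $\supp{\vec p_\alpha}\cup\supp{\vec p_\beta}$, which still has size at most $\nu$, so it lies in $\PPP$.

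Next I would package the per-coordinate compatibility data into one coloring. Define $\map{c}{[I_0]^2}{{}^r 2}$ by letting $c(\{\alpha,\beta\})(\gamma)=0$ if $\vec p_\alpha(\gamma)\perp\vec p_\beta(\gamma)$ in $\PPP_\gamma$ and $c(\{\alpha,\beta\})(\gamma)=1$ otherwise, for $\gamma\in r$. The number of colors is $2^{\betrag{r}}\le 2^\nu<\kappa$, so the partition relation $\kappa\rightarrow(\kappa)^2_{2^\nu}$ provides a homogeneous $I\in[I_0]^\kappa$ on which $c$ takes a constant value $s\in{}^r 2$. The key step is then to argue that $s(\gamma)=1$ for every $\gamma\in r$: if $s(\gamma_0)=0$ for some $\gamma_0\in r$, then $\Set{\vec p_\alpha(\gamma_0)}{\alpha\in I}$ would be a family of $\kappa$ pairwise incompatible, hence distinct, conditions in $\PPP_{\gamma_0}$, contradicting the $\kappa$-chain condition for $\PPP_{\gamma_0}$. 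Thus all root coordinates are pairwise compatible on $I$, and by the equivalence from the previous paragraph the conditions $\seq{\vec p_\alpha}{\alpha\in I}$ are pairwise compatible in $\PPP$, as required.

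I expect the only real subtlety to be the bookkeeping around the root $r$: verifying that compatibility in the $\nu$-support product genuinely reduces to coordinatewise compatibility on $r$, which needs both the $\Delta$-system property and the infinitude of $\nu$, and confirming that the number of colors stays below $\kappa$ so that the weakly compact partition relation can be invoked. The heart of the argument is the observation that a single application of $\kappa\rightarrow(\kappa)^2_{2^\nu}$ simultaneously resolves compatibility on all of the at most $\nu$ root coordinates at once; this is precisely the \emph{small modification} of the folklore two-poset argument mentioned before the proposition, in which two colors for two posets are replaced by $2^{\betrag{r}}$ colors for the root coordinates.
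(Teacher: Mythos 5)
Your proof is correct and follows essentially the same route as the paper's: a $\Delta$-system argument on the supports, one application of the weakly compact partition relation to a coloring with fewer than $\kappa$ colors that records compatibility on the root coordinates, and the $\kappa$-chain condition of the factors to rule out any homogeneous color witnessing incompatibility, together with the observation that compatibility in the $\nu$-support product reduces to coordinatewise compatibility on the root. The only (immaterial) difference is bookkeeping: you color pairs with the full compatibility vector in ${}^{r}2$, giving $2^{\nu}<\kappa$ colors, whereas the paper colors with the least incompatible root coordinate, giving $\nu+1$ colors.
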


\begin{proof}
 Fix a sequence $\seq{\PPP_\gamma}{\gamma<\lambda}$ of partial orders satisfying the $\kappa$-chain condition and a sequence $\seq{\vec{p}_\alpha}{\alpha<\kappa}$ of conditions in the corresponding $\nu$-support product $\vec{\PPP}=\prod_{\gamma<\lambda}\PPP_\gamma$. With the help of the $\Delta$-system lemma, we can find $D\in[\kappa]^\kappa$ and a function  $\map{r}{\nu}{\lambda}$ such that the set $\Set{\supp{\vec{p}_\alpha}}{\alpha\in D}$ is a $\Delta$-system with root $\ran{r}$. Given $\alpha,\beta\in D$ with $\alpha<\beta$, there is a minimal $c(\alpha,\beta)\leq\nu$ with the property that either $c(\alpha,\beta)<\nu$ and the conditions $\vec{p}_\alpha(r(c(\alpha,\beta)))$ and $\vec{p}_\beta(r(c(\alpha,\beta)))$ are incompatible in $\PPP_{r(c(\alpha,\beta))}$, or $c(\alpha,\beta)=\nu$ and the conditions $\vec{p}_\alpha$ and $\vec{p}_\beta$ are compatible in $\vec{\PPP}$. By the weak compactness of $\kappa$, there is $H\in[D]^\kappa$ that is homogeneous for the resulting colouring $\map{c}{[D]^2}{\nu+1}$. Since each $\PPP_\xi$ satisfies the $\kappa$-chain condition, we can conclude that $c$ is constant on $[H]^2$ with value $\nu$ and this shows that the resulting sequence $\seq{\vec{p}_\alpha}{\alpha\in H}$ consists of pairwise compatible conditions in $\vec{\PPP}$. 
\end{proof}

The $\kappa$-Knaster property clearly implies the $\kappa$-chain condition. This property is typically used because of its nice product behavior: the product of two $\kappa$-Knaster partial orders is $\kappa$-Knaster, and the product of a $\kappa$-Knaster partial order  with a partial order satisfying the $\kappa$-chain condition satisfies the $\kappa$-chain condition. For reasons described next, we are interested in finding alternative proofs of the above proposition.

A series of deep results shows that, for regular cardinals $\kappa>\aleph_1$, many consequences of weak compactness can be derived from the assumption $\calC_\kappa$. In \cite{MR1318912}, Shelah showed that $\calC_\kappa$ fails if $\kappa$ is the successor of a singular cardinal. Rinot showed in \cite{MR3335852} that $\calC_\kappa$ implies that every stationary subset of $\kappa$ reflects. In particular, this result can be used to reprove a series of results of Shelah showing that $\calC_\kappa$ fails for all successors of uncountable regular cardinals. In addition, Rinot showed in \cite{MR3271280} that for $\kappa>\aleph_1$, the principle $\calC_\kappa$ implies a failure of $\square(\kappa)$ and therefore it implies that $\kappa$ is weakly compact in  G\"odel's constructible universe $\LL$. These results suggest an affirmative answer to the following question of Todor{\v{c}}evi{\'c}.

\begin{question}[Todor{\v{c}}evi{\'c}, {\cite[Question 8.4.27]{MR2355670}}]
 Are the following statements equivalent for every regular cardinal $\kappa>\aleph_1$?
 \begin{enumerate}
  \item $\kappa$ is weakly compact.

  \item $\calC_\kappa$ holds. 
 \end{enumerate}
\end{question}

In this paper, we want to consider properties of partial orders that imply the $\kappa$-chain condition, are preserved by forming products and are equivalent to the $\kappa$-chain condition if $\kappa$ is a weakly compact cardinal. Note that such properties provide alternative proofs of Proposition \ref{proposition:WCyieldsCkappa}. It is interesting to consider the question whether the $\kappa$-chain condition can be equivalent to such a property at non-weakly compact cardinals, because both possible answers yield interesting statements: a positive answer to this question would answer Todor{\v{c}}evi{\'c}'s question in the negative; while a negative answer leads to new characterizations of weak compactness using chain conditions.

We will now introduce the properties of partial orders studied in this paper. 
Remember that, given a partial order $\PPP$, we say that $\QQQ\subseteq\PPP$ is a regular suborder if the inclusion map preserves incompatibility and maximal antichains in $\QQQ$ are maximal in $\PPP$.

\begin{definition}
 Given a cardinal $\kappa$ and a partial order $\PPP$, we let $\mathrm{Reg}_\kappa(\PPP)$ denote the collection of all regular suborders of $\PPP$ of cardinality less than $\kappa$.  
\end{definition}

In the following, we will focus on properties of partial orders that imply that $\mathrm{Reg}_\kappa(\PPP)$ is \emph{large} in a certain sense. 
The following definition presents our main example of such a notion. It uses Jech's definition of stationarity in $\PP_\kappa(A)$ to express largeness (see Section \ref{sec_Prelims}).

\begin{definition}[{\cite[Definition 19]{Cox_Layerings}}]
 Given an uncountable regular cardinal $\kappa$, a partial order $\mathbb{P}$ is called \emph{$\kappa$-stationarily layered} if $\mathrm{Reg}_\kappa(\PPP)$ is stationary in $\PP_\kappa(\PPP)$.
\end{definition}

The notion of layering has appeared most frequently in the literature on saturated ideals.
Foreman, Magidor and Shelah used layered ideals in \cite{MR942519} to construct nonregular ultrafilters. Shelah also used the notion of a layered ideal in \cite{MR1026923} to address a variant of the \emph{Katetov question} asking about the existence of a topological space of size $\aleph_1$ that contains no isolated points and has the property that any real function has a point of continuity. 
The first author used the notion of layering in \cite{Cox_Layerings} to prove a general iteration theorem about so-called \emph{universal Kunen collapses}.

The following lemma is proved in \cite{Cox_Layerings}. For sake of completeness, we will present its short proof in Section \ref{sec_Prelims}.

\begin{lemma}[{\cite[Lemma 4]{Cox_Layerings}}]\label{lemma:StationaryLayeredKnaster}
 If $\kappa$ is an uncountable regular cardinal and $\PPP$ is a $\kappa$-stationarily layered partial order, then $\PPP$ is $\kappa$-Knaster. 
\end{lemma}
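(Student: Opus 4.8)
The plan is to run the standard reformulation of Jech stationarity through elementary submodels. For a sufficiently large regular $\theta$ and a well-ordering $\lhd$ of $\HH{\theta}$, the stationarity of $\mathrm{Reg}_\kappa(\PPP)$ in $\PP_\kappa(\PPP)$ lifts to stationarity of $\Set{x\in\PP_\kappa(\HH{\theta})}{x\cap\PPP\in\mathrm{Reg}_\kappa(\PPP)}$ in $\PP_\kappa(\HH{\theta})$, which meets the club of elementary submodels. Intersecting further with the clubs ``$\vec p\in M$'' and ``$M\cap\kappa\in\kappa$'', I may fix $M\prec(\HH{\theta},\in,\lhd)$ with $\PPP,\kappa,\vec p\in M$, where $\vec p=\seq{p_\alpha}{\alpha<\kappa}$ is the given sequence, $\QQQ:=M\cap\PPP\in\mathrm{Reg}_\kappa(\PPP)$, and $\delta:=M\cap\kappa\in\kappa$. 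First I would record the basic projection fact: every $p\in\PPP$ has a \emph{reduction} to $\QQQ$, i.e.\ some $q\in\QQQ$ such that every $r\le_\QQQ q$ is compatible with $p$ in $\PPP$. Otherwise $\Set{r\in\QQQ}{r\perp p}$ would be dense in $\QQQ$, hence contain a maximal antichain of $\QQQ$, which by the regularity of the suborder is maximal in $\PPP$ while being incompatible with $p$ --- impossible.

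The technical heart is the following mechanism: if $q\in\QQQ$ is a reduction of $p\in\PPP$ and $p'\in\QQQ$ is compatible with $q$, then $p'$ is compatible with $p$ in $\PPP$. Indeed, a common $\QQQ$-extension $r\le_\QQQ p',q$ lies below $q$, so it is compatible with $p$; any witness $t\le_\PPP r,p$ then satisfies $t\le p'$, whence $t$ certifies $p'\parallel p$. Applying this to $p=p_\delta$ with a reduction $q\in\QQQ\subseteq M$, the set $\Set{\alpha<\kappa}{p_\alpha\parallel q}$ belongs to $M$ and contains $\delta=\sup(M\cap\kappa)$ (as $q\parallel p_\delta$), so it is unbounded in $\kappa$ and cofinal in $\delta$; for each such $\alpha\in M$ one has $p_\alpha\in\QQQ$, and the mechanism gives $p_\alpha\parallel p_\delta$. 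The same submodel already reproves the $\kappa$-chain condition: if $\vec p$ were a maximal antichain, then $\vec p\cap M$ would be maximal in $\QQQ$, hence in $\PPP$ by regularity, yet $p_\delta$ is incompatible with all of it --- a contradiction.

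The main obstacle is amplifying this to a \emph{pairwise} compatible family of full size $\kappa$. The mechanism only certifies compatibility of a ``fresh'' condition $p_\delta$ with conditions already in $\QQQ$, and a naive pigeonhole on reductions is doomed: sharing a common reduction does not make two conditions compatible with each other (for instance $\QQQ=\{\mathbbm 1\}$ is always a regular suborder and reduces everything, yet reduces incompatible conditions alike). To build the clique I would iterate along a $\lessdot$-increasing continuous chain $\seq{\QQQ_\xi}{\xi<\kappa}$ with $\QQQ_\xi=M_\xi\cap\PPP$ and $\delta_\xi=M_\xi\cap\kappa$ strictly increasing, arranged so that $\bigcup_{\xi<\kappa}\QQQ_\xi$ contains every $p_\alpha$, pass to a generic filter $G$ for this ($\kappa$-c.c.) union, and use the mechanism level by level to feed cofinally many $p_\alpha$ into the filter generated by $G$, producing a size-$\kappa$ family that is centered, hence linked, in $V[G]$. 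I expect the genuine difficulty to lie in the final reflection back to $V$: since the lower bounds witnessing compatibility vary from pair to pair, the clique is not centered by a single condition and the descent to a ground-model family of size $\kappa$ is not a routine counting argument --- controlling this spread-out witnessing is precisely the step that distinguishes the Knaster conclusion from the easy $\kappa$-chain condition.
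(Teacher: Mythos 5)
Your first half assembles exactly the right tools---existence of reducts, the compatibility-transfer mechanism, the stationary family of elementary submodels $M\prec\HH{\theta}$ with $\PPP\cap M\in\mathrm{Reg}_\kappa(\PPP)$---but the proposal then stalls at the decisive amplification step, and it stalls because of a misdiagnosis: your claim that ``a naive pigeonhole on reductions is doomed'' dismisses precisely the argument that works, and the forcing detour you substitute for it is left with an acknowledged hole (the ``reflection back to $\VV$''), so the Knaster property is never actually established. The pigeonhole on reductions succeeds once it is run as a pressing-down argument on the \emph{models}, not on the conditions: for each $M$ in the stationary set $S$ of suitable submodels, a reduct $r(M)$ of $p_{\kappa\cap M}$ into $\PPP\cap M$ is an element of $M$, so $M\mapsto r(M)$ is regressive on $S$, and normality of the club filter on $\PP_\kappa(\HH{\theta})$ yields a stationary $S'\subseteq S$ and a single condition $q$ with $r(M)=q$ for all $M\in S'$. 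Your objection---that two conditions sharing a reduct need not be compatible---is true but beside the point, because compatibility is not deduced from the shared reduct alone; it is routed through the elementarity of the \emph{larger} model. Given $M,N\in S'$ with $\kappa\cap M<\kappa\cap N$, both $p_{\kappa\cap M}$ and $q$ lie in $N$ (the former since $\vec{p}\in N$ and $\kappa\cap M\in\kappa\cap N$) and are compatible in $\PPP$, so by elementarity $N$ contains a common extension $q'\in\PPP\cap N$ of $p_{\kappa\cap M}$ and $q$. Since $q'\leq_\PPP q=r(N)$ and $q'\in\PPP\cap N$, the very definition of reduct (or your own ``mechanism'' applied inside $N$) gives that $q'$ is compatible with $p_{\kappa\cap N}$; any witness extends $q'\leq_\PPP p_{\kappa\cap M}$, so $p_{\kappa\cap M}\parallel p_{\kappa\cap N}$. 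As $S'$ projects to a stationary subset of $\kappa$, the family $\Set{p_{\kappa\cap M}}{M\in S'}$ is a pairwise compatible subfamily of size $\kappa$, and the lemma is proved. Your counterexample $\QQQ=\{\mathbbm{1}\}$ does not touch this: here the suborders are traces $\PPP\cap M$ of elementary submodels containing $\vec{p}$, and the witness $q'$ is found \emph{inside} the larger model below the common reduct, not manufactured from the reduct itself.

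By contrast, the forcing plan in your last paragraph is not an argument as it stands. You give no mechanism that forces $\kappa$-many of the $p_\alpha$ into (the filter generated by) $G$---genericity only meets dense sets, and $\Set{r\in\PPP}{\exists\beta>\alpha\,(r\leq_\PPP p_\beta)}$ need not be dense---and you yourself flag the descent from $\VV[G]$ to $\VV$ as unresolved. Since pairwise compatibility of ground-model conditions is absolute, what is missing is exactly a set $I\in[\kappa]^\kappa$ lying in $\VV$, and that is what the Fodor argument above produces directly, with no forcing and in a few lines; the ``spread-out witnessing'' you worry about is tamed because all witnesses for pairs involving $p_{\kappa\cap N}$ are collected below the single stabilized reduct $q$ inside $N$.
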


In particular, if every partial order that satisfies the $\kappa$-chain condition is $\kappa$-stationarily layered, then this lemma shows that $\calC_\kappa$ holds.

In the spirit of the approach outlined above, we are interested in classes of stationarily layered partial orders that are closed under products. The following definition yields examples of such classes. These classes will have the property that any two members are layered on a \emph{common stationary set} (in a sense made precise later) and this common layering can be used to show that certain classes of this form  are also closed under products with larger supports.

\begin{definition}
 Let $\kappa$ be an uncountable regular cardinal, $\lambda\geq\kappa$ be a cardinal and $\calF$ be a filter on $\PP_\kappa(\lambda)$. 
 \begin{enumerate}
  \item A partial order $\PPP$  is $\calF$-layered, if it has cardinality at most $\lambda$ and  $$\Set{a\in\PP_\kappa(\lambda)}{s[a]\in\mathrm{Reg}_\kappa(\PPP)}\in\calF$$ holds for every surjection $\map{s}{\lambda}{\PPP}$. 

  \item A partial order $\PPP$ is \emph{completely $\calF$-layered} if every subset of $\PPP$ of cardinality at most $\lambda$ is contained in a regular suborder of $\PPP$ of cardinality at most $\lambda$ and every regular suborder of $\PPP$ of size at most $\lambda$ is $\calF$-layered. 
 \end{enumerate} 
\end{definition}

In Section \ref{section:FLayeredPoset}, we will show that, if $\calF$ is a normal filter on $\PP_\kappa(\lambda)$, then every completely $\calF$-layered partial order is $\kappa$-stationarily layered and the class of partial orders with this property is closed under finite support products. Moreover, for many interesting filters $\calF$, the class of completely $\calF$-layered partial orders is also closed under products with larger supports. Finally, if $\kappa$ is weakly compact and $\calF_{wc(\kappa)}$ is the \emph{weakly compact filter on $\PP_\kappa(\kappa)$} (see Example  \ref{example:Relevant Filters}.(\ref{example:item:WCFilter})), then the class of completely $\calF_{wc(\kappa)}$-layered partial orders will be shown to closed under $\nu$-support products for every $\nu<\kappa$ and the following theorem proven in Section \ref{section:FilterLargeCardinals} shows that this class is equal to the class of all partial orders satisfying the $\kappa$-chain condition.

\begin{theorem}\label{theorem:WeaklyCompactEquivalentChainCondition}
 Given a weakly compact cardinal $\kappa$, the following statements are equivalent for every partial order $\PPP$: 
 \begin{enumerate}
  \item $\PPP$ satisfies the $\kappa$-chain condition. 

  \item $\PPP$ is $\kappa$-Knaster. 

  \item $\PPP$ is $\kappa$-stationarily layered. 

  \item $\PPP$ is completely $\calF_{wc(\kappa)}$-layered. 
 \end{enumerate}
\end{theorem}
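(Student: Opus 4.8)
The plan is to establish the cycle of implications (i) $\Rightarrow$ (iv) $\Rightarrow$ (iii) $\Rightarrow$ (ii) $\Rightarrow$ (i), of which three steps are already available. The implication (ii) $\Rightarrow$ (i) is the elementary observation that the $\kappa$-Knaster property implies the $\kappa$-chain condition, and (iii) $\Rightarrow$ (ii) is precisely Lemma \ref{lemma:StationaryLayeredKnaster}. For (iv) $\Rightarrow$ (iii) I would first record that the weakly compact filter $\calF_{wc(\kappa)}$ from Example \ref{example:Relevant Filters}.(\ref{example:item:WCFilter}) is a normal filter on $\PP_\kappa(\kappa)$, and then quote the result from Section \ref{section:FLayeredPoset} stating that, for a normal filter $\calF$, every completely $\calF$-layered partial order is $\kappa$-stationarily layered. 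Thus the entire weight of the theorem rests on (i) $\Rightarrow$ (iv), which I would split into the two clauses defining complete layering.

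For the first clause, I would show that every $X \subseteq \PPP$ with $\betrag{X} \leq \kappa$ is contained in a regular suborder of $\PPP$ of cardinality at most $\kappa$. Since $\kappa$ is weakly compact, it is inaccessible and hence satisfies $\kappa^{<\kappa} = \kappa$, so I may pick an elementary submodel $N \prec \HH{\theta}$, for suitably large $\theta$, with $X \cup \{\PPP\} \subseteq N$, $\betrag{N} = \kappa$ and $N$ closed under sequences of length less than $\kappa$. The $\kappa$-chain condition ensures that every maximal antichain of the suborder $\QQQ = \PPP \cap N$ has cardinality less than $\kappa$ and, being a subset of $N$, is therefore an element of $N$; an elementarity argument then shows that $\QQQ$ is a regular suborder of $\PPP$ with $X \subseteq \QQQ$ and $\betrag{\QQQ} \leq \kappa$. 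As regular suborders of a partial order satisfying the $\kappa$-chain condition again satisfy the $\kappa$-chain condition, the second clause reduces to the core assertion that every partial order $\QQQ$ of cardinality at most $\kappa$ satisfying the $\kappa$-chain condition is $\calF_{wc(\kappa)}$-layered.

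To prove this, I would fix a surjection $\map{s}{\kappa}{\QQQ}$, set $A = \Set{a \in \PP_\kappa(\kappa)}{s[a] \in \mathrm{Reg}_\kappa(\QQQ)}$, and verify $A \in \calF_{wc(\kappa)}$ through the elementary embedding characterization of the weakly compact filter: it suffices to show that $\kappa \in j(A)$ for every elementary embedding $\map{j}{M}{N}$ with transitive target $N$ and critical point $\kappa$, where $M$ is a transitive model of cardinality $\kappa$ that is closed under sequences of length less than $\kappa$ and contains $A$, $s$ and $\QQQ$; here $\kappa = j[\kappa]$ plays the role of the canonical seed for $\PP_\kappa(\kappa)$. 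Unravelling the definition of $A$ by elementarity, $\kappa \in j(A)$ is equivalent to the statement that $j(s)[\kappa]$ is a regular suborder of $j(\QQQ)$ of $N$-cardinality less than $j(\kappa)$. Since the critical point of $j$ is $\kappa$, we have $j(s)[\kappa] = j[\QQQ]$, and this set lies in $N$ as the range of $j(s)\restriction\kappa$ and has $N$-cardinality at most $\kappa < j(\kappa)$. It therefore remains to show that $j[\QQQ]$ is a regular suborder of $j(\QQQ)$.

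This last point is where the $\kappa$-chain condition is indispensable, and I expect it to be the main obstacle. The restriction $j \restriction \QQQ$ is an order isomorphism of $\QQQ$ onto $j[\QQQ]$, and by elementarity two conditions of $\QQQ$ are compatible in $\QQQ$ if and only if their images are compatible in $j(\QQQ)$; consequently incompatibility in $j[\QQQ]$ agrees with incompatibility in $j(\QQQ)$, which yields the preservation of incompatibility required of a regular suborder. For the maximality clause, let $A_0$ be a maximal antichain of $j[\QQQ]$ and let $A_0' = (j \restriction \QQQ)^{-1}[A_0]$ be the corresponding maximal antichain of $\QQQ$. The $\kappa$-chain condition forces $\betrag{A_0'} < \kappa$, so $A_0'$ is an element of $M$, and since $\kappa$ is the critical point of $j$ we obtain $j(A_0') = j[A_0'] = A_0$. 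Now elementarity transforms the maximality of $A_0'$ in $\QQQ$ into the maximality of $A_0 = j(A_0')$ in $j(\QQQ)$, so that $A_0$ is maximal in $j(\QQQ)$. This shows that $j[\QQQ]$ is a regular suborder of $j(\QQQ)$ and completes the argument. The essential role of the chain condition is to keep every maximal antichain below the critical point of $j$; without it, an antichain of size $\kappa$ would be stretched by $j$ and could lose its maximality in $j(\QQQ)$.
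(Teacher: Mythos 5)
Your proposal is correct, and its skeleton coincides with the paper's: the cycle (i) $\Rightarrow$ (iv) $\Rightarrow$ (iii) $\Rightarrow$ (ii) $\Rightarrow$ (i), with (iv) $\Rightarrow$ (iii) via Lemma \ref{lemma:FLayeredCompleteFLayered} (whose hypothesis $\lambda=\lambda^{{<}\kappa}$ is here $\kappa^{{<}\kappa}=\kappa$, which holds by inaccessibility, as you note), (iii) $\Rightarrow$ (ii) via Lemma \ref{lemma:StationaryLayeredKnaster}, and (i) $\Rightarrow$ (iv) split into the closure clause plus the core claim that every $\kappa$-c.c.\ partial order of size at most $\kappa$ is $\calF_{wc(\kappa)}$-layered; your elementary-submodel argument for the closure clause is just a repackaging of the paper's Proposition \ref{proposition:REgularClosureChainCondition}. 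Where you genuinely diverge is the proof of the core claim, which is the paper's Lemma \ref{lemma:WeaklyCompactFwcLayered}: the paper works directly with the $\Pi^1_1$-definition of $\calF_{wc(\kappa)}$, coding the $s$-preimage of the ordering and incompatibility relations of $\QQQ$ into a subset of $\kappa$ and choosing a triple $\langle\Phi,A,a\rangle$ whose reflection set $R_{\Phi,A,a}$ consists of inaccessible $\mu$ closed in an auxiliary club and such that $s[\mu]$ satisfies the $\mu$-chain condition; at such $\mu$, regularity of $s[\mu]$ follows by an antichain argument much like your final step. You instead invoke the elementary-embedding characterization of the weakly compact filter and then run, for embeddings $\map{j}{M}{N}$ with critical point $\kappa$, essentially verbatim the computation the paper performs for supercompact ultrapowers in Lemma \ref{lemma:SupercompactAllPOLambdaULayered}: $j(s)[\kappa]=j[\QQQ]$, maximal antichains of $j[\QQQ]$ pull back to maximal antichains of $\QQQ$, the $\kappa$-chain condition keeps them below the critical point so that $j(A_0')=j[A_0']=A_0$, and elementarity restores maximality in $j(\QQQ)$. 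This is a legitimate and arguably more uniform route, since it makes the weakly compact and supercompact cases literally the same argument; its cost is that it rests on a fact the paper never states or proves, namely that $A\in\calF_{wc(\kappa)}$ follows from $\kappa\in j(A)$ holding for all such embeddings $j$ defined on $\kappa$-sized transitive $\ZFC^{-}$-models $M\ni A$ closed under ${<}\kappa$-sequences. This equivalence (essentially the Hauser--Hellsten characterization, whose nontrivial half says that every suitable $M$ containing a weakly-compact-positive set $T$ admits an embedding with $\kappa\in j(T)$) is standard but requires proof; making your argument self-contained relative to the paper's definition of $\calF_{wc(\kappa)}$ via the sets $R_{\Phi,A,a}$ would take roughly the same $\Pi^1_1$-coding work the paper does directly. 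Note also that the quantification you actually need is weaker than what you state: it suffices that \emph{some} suitable $M$ containing $A$, $s$ and $\QQQ$ satisfies $\kappa\in j(A)$ for all $j$, which is what the positivity characterization delivers, so your ``for every $M$'' formulation is fine a fortiori.
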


The results mentioned above show that complete $\calF_{wc(\kappa)}$-layeredness satisfies the demands listed above and it is interesting to ask whether the existence of a normal uniform filter $\calF$ on $\PP_\kappa(\lambda)$ with the property that the $\kappa$-chain condition is equivalent to complete $\calF$-layeredness implies the weak compactness of $\kappa$. It turns out that the weak compactness of $\kappa$ already follows from the weaker assumption that the  $\kappa$-chain condition is equivalent to stationary layeredness. This leads to the following new characterization of weakly compact cardinals proven in Section \ref{section:CharacterizeLarge}.

\begin{theorem}\label{thm_WeakCompactEquiv}
 The following statements are equivalent for every  uncountable regular cardinal $\kappa$:
 \begin{enumerate}
  \item $\kappa$ is weakly compact. 

   \item Every partial order of cardinality at most $\kappa$ satisfying the $\kappa$-chain condition is $\kappa$-stationarily layered.

  \item Every partial order satisfying the $\kappa$-chain condition is $\kappa$-stationarily layered.  
\end{enumerate}
\end{theorem}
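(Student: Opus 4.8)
The strategy is to prove the cycle (i)$\Rightarrow$(iii)$\Rightarrow$(ii)$\Rightarrow$(i). The implication (i)$\Rightarrow$(iii) is immediate from Theorem \ref{theorem:WeaklyCompactEquivalentChainCondition}: the equivalence of its clauses (1) and (3) states precisely that, at a weakly compact cardinal, satisfying the $\kappa$-chain condition coincides with being $\kappa$-stationarily layered, so every $\kappa$-chain condition partial order is $\kappa$-stationarily layered. The implication (iii)$\Rightarrow$(ii) is trivial, since (ii) merely restricts the quantifier in (iii) to partial orders of cardinality at most $\kappa$. Hence everything reduces to (ii)$\Rightarrow$(i), which I would prove by contraposition: assuming $\kappa$ is \emph{not} weakly compact, I would produce a partial order of cardinality at most $\kappa$ that satisfies the $\kappa$-chain condition yet fails to be $\kappa$-stationarily layered.

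By the classical characterization of weak compactness as inaccessibility together with the tree property, the failure of (i) splits into two cases, and I would build a counterexample in each. If $\kappa$ is not inaccessible, then, being regular, it is not a strong limit, so I would fix $\mu<\kappa$ with $2^\mu\geq\kappa$ and an injection $\seq{b_\alpha}{\alpha<\kappa}$ of $\kappa$ into ${}^\mu 2$. Setting $c(\alpha,\beta)$ to be the least $\nu<\mu$ with $b_\alpha(\nu)\neq b_\beta(\nu)$, any three of the $b$'s force two of them to agree at their minimal splitting coordinate, so $c$ has no three-element homogeneous set; from $c$ one builds a partial order on underlying set $\kappa$, of size $\kappa$, whose incompatibility relation is governed by $c$ and which satisfies the $\kappa$-chain condition. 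If instead $\kappa$ is inaccessible but carries a $\kappa$-Aronszajn tree $T$ (which then has size $\kappa$, and $\kappa^{<\kappa}=\kappa$), I would use the specialization forcing $\mathbb{S}(T)$, consisting of the partial specializing functions on $T$ with domain of size less than $\kappa$: this partial order has cardinality $\kappa^{<\kappa}=\kappa$ and satisfies the $\kappa$-chain condition by a $\Delta$-system argument combined with the bound on the levels of $T$.

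In both cases the partial order satisfies the $\kappa$-chain condition by construction, so the entire difficulty — and the main obstacle — is to show that it is \emph{not} $\kappa$-stationarily layered. I would emphasize that this cannot be routed through Lemma \ref{lemma:StationaryLayeredKnaster}, because both partial orders may well be $\kappa$-Knaster; it is exactly here that the full strength of stationary layeredness must be contradicted directly. The plan is to exhibit a club in $\PP_\kappa(\PPP)$ disjoint from $\mathrm{Reg}_\kappa(\PPP)$, namely the club of traces $\PPP\cap M$ for elementary submodels $M\prec\HH{\theta}$ containing the relevant parameters with $|M|<\kappa$ and $\delta:=M\cap\kappa\in\kappa$, by showing that each such trace fails to be a regular suborder. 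In the inaccessible case this failure should be read off from the behaviour of $T$ at level $\delta$: a condition concentrating on nodes above level $\delta$ witnesses that some maximal antichain of $\mathbb{S}(T)\cap M$ is no longer maximal in $\mathbb{S}(T)$, and running this across the stationarily many $\delta$ contradicts the absence of a cofinal branch through $T$. In the non-inaccessible case the obstruction is that $M$ sees fewer than $\kappa$ of the branches $b_\alpha$, so the colouring $c$ cannot be reflected into $M$ in a way compatible with regularity. Verifying these regularity failures precisely — equivalently, showing that no small regular suborder can capture the relevant splitting of $T$ or of $\seq{b_\alpha}{\alpha<\kappa}$ — is the heart of the argument. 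Once it is complete, the constructed partial order witnesses the failure of (ii), and the conclusion that $\kappa$ is weakly compact follows from the tree-property characterization.
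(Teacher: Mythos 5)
Your top-level skeleton matches the paper's: (i)$\Rightarrow$(iii) is Theorem \ref{theorem:WeaklyCompactEquivalentChainCondition}, (iii)$\Rightarrow$(ii) is trivial, and (ii)$\Rightarrow$(i) goes by splitting the failure of weak compactness into "not inaccessible" and "inaccessible with a $\kappa$-Aronszajn tree" and producing a $\kappa$-c.c.\ non-layered poset in each case (the paper does exactly this, citing Theorem \ref{thm_InaccEquiv} for inaccessibility, then Lemma \ref{lemma:Baumgartner} and Lemma \ref{lemma:SpecialAronszajnNotLayered} to exclude $\kappa$-Aronszajn trees, then Erd\H{o}s--Tarski). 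However, both of your case constructions have genuine defects, and one is fatal. In the Aronszajn case, your forcing $\mathbb{S}(T)$ with specializing functions of domain size \emph{less than} $\kappa$ provably fails the $\kappa$-chain condition: for each $\alpha<\kappa$, the function $p_\alpha$ that is constantly $0$ on the level $T(\alpha)$ is a condition (a chain meets a level in at most one node, and $\betrag{T(\alpha)}<\kappa$), and for $\alpha<\beta$ the conditions $p_\alpha$ and $p_\beta$ are incompatible, since any node of $T(\beta)$ has a predecessor in $T(\alpha)$ and both receive the value $0$, so $p_\alpha\cup p_\beta$ is not injective on chains. Thus $\Set{p_\alpha}{\alpha<\kappa}$ is an antichain of size $\kappa$. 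Your appeal to a $\Delta$-system argument cannot repair this: the generalized $\Delta$-system lemma needs a uniform bound $\mu<\kappa$ on the size of the sets, not merely size ${<}\kappa$. Finiteness of conditions is essential here, which is why the paper works with the finite-condition forcing $\PPP(\TTT)$ of Definition \ref{def_SpecializingForcing}, whose $\kappa$-c.c.\ is Baumgartner's argument (Lemma \ref{lemma:Baumgartner}). Relatedly, your sketch of non-layeredness in this case is off target: in the paper's Lemma \ref{lemma:SpecialAronszajnNotLayered} the contradiction is with the defining property of a reduct $q$ of the single condition $p=\{\langle t,0\rangle\}$ for $t$ on level $\kappa\cap M$, using that the levels of an Aronszajn tree are small so that the predecessor $u<_{\TTT}t$ on a level above $\dom{q}$ lies in $M$; branchlessness is used for the chain condition, not for the failure of layering, and no argument "across stationarily many $\delta$" against the existence of a cofinal branch is needed or available.

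In the non-inaccessible case, what you describe is not yet a construction: "a partial order on underlying set $\kappa$ whose incompatibility relation is governed by $c$" does not determine a poset (an arbitrary graph need not be the compatibility graph of any partial order, let alone a $\kappa$-c.c.\ one), and the Sierpi\'nski-style fact that $c$ has no $3$-element homogeneous set gives you, by itself, neither the chain condition nor the failure of stationary layeredness --- which you explicitly defer as "the heart of the argument." The paper's actual counterexample (inside the proof of Theorem \ref{thm_InaccEquiv}) is the poset of pairs $p=\langle s_p,a_p\rangle$ with $s_p$ a finite sequence from ${}^{{<}\lambda}2$ and $a_p$ a finite subset of a $\kappa$-sized set $X\subseteq{}^{\lambda}2$, ordered so that extensions must avoid the branches in $a_p$; it is ${<}\kappa$-linked (conditions with the same first coordinate are compatible), hence $\kappa$-c.c., and the reduct argument with an elementary submodel $M$ and some $x\in X\setminus M$ shows $\PPP\cap M$ is not regular. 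Indeed, since ${<}\kappa$-linked posets satisfy the $\kappa$-chain condition, you could dispose of this whole case by citing Theorem \ref{thm_InaccEquiv}, exactly as the paper does, rather than attempting to rebuild it.
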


The proof of the above theorem also provides a new characterization of inaccessibility in terms of stationary layeredness.   
This characterization makes use of the following strengthening of the Knaster property.

\begin{definition}
 Given an uncountable cardinal $\kappa$, a partial order $\PPP$ is \emph{${<}\kappa$-linked} if there is $\lambda<\kappa$ and a function $\map{c}{\PPP}{\lambda}$ that is injective on antichains in $\PPP$. 
\end{definition}

\begin{theorem}\label{thm_InaccEquiv}
 The following statements are equivalent for every  uncountable regular cardinal $\kappa$: 
 \begin{enumerate}
  \item\label{item_Inacc} $\kappa$ is strongly inaccessible. 

  \item\label{item_LessKappaLinkedEquivLayer} Every ${<}\kappa$-linked partial order of cardinality at most $\kappa$ is $\kappa$-stationarily layered. 

   \item Every ${<}\kappa$-linked partial order is $\kappa$-stationarily layered.
 \end{enumerate}
\end{theorem}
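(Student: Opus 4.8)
The plan is to prove the cycle (i)$\Rightarrow$(iii)$\Rightarrow$(ii)$\Rightarrow$(i), where (iii)$\Rightarrow$(ii) is immediate since (ii) is the restriction of (iii) to partial orders of cardinality at most $\kappa$. Throughout I use the following observation about any ${<}\kappa$-linked partial order $\PPP$ with a colouring $c : \PPP \to \lambda$ ($\lambda<\kappa$) that is injective on antichains: writing $L_\xi = \Set{p\in\PPP}{c(p)=\xi}$ for the (pairwise compatible) colour classes, every antichain of $\PPP$ has size at most $\lambda$, and, crucially, \emph{a maximal antichain $A$ of a suborder that meets every class $L_\xi$ is automatically maximal in $\PPP$} --- for if $p\in L_\eta$ then $p$ is compatible with the unique element of $A\cap L_\eta$. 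Hence non-regularity of a suborder can only be caused by a maximal antichain that \emph{misses} some colour class together with a witness (a dodger) lying in that missed class.

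For (i)$\Rightarrow$(iii), assume $\kappa$ is strongly inaccessible and fix such a $\PPP$ and $c$. Given a club $C$ in $\PP_\kappa(\PPP)$, choose $F : [\PPP]^{<\omega}\to\PPP$ whose closure points lie in $C$. Using that $\kappa$ is a strong limit, build an increasing continuous chain $\seq{M_\eta}{\eta<\lambda^+}$ of elementary submodels of $\HH{\theta}$ (for regular $\theta\gg\kappa$) with $\PPP,c,F,\lambda\in M_0$, $|M_\eta|<\kappa$, and ${}^{\lambda}M_\eta\subseteq M_{\eta+1}$; this is possible because $|M_\eta|^\lambda<\kappa$. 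Since $\lambda^+<\kappa$ and $\kappa$ is regular, $M=\bigcup_{\eta<\lambda^+}M_\eta$ satisfies $|M|<\kappa$ and ${}^{\lambda}M\subseteq M$. Now $\QQQ = M\cap\PPP$ has size $<\kappa$; incompatibility is preserved by elementarity; and any maximal antichain $A$ of $\QQQ$ has $|A|\le\lambda$ and $A\subseteq M$, hence $A\in M$ by $\lambda$-closure, so by elementarity $A$ is maximal in $\PPP$. Thus $\QQQ\in\mathrm{Reg}_\kappa(\PPP)$, and since $F\in M$ the set $\QQQ$ is a closure point of $F$, so $\QQQ\in\mathrm{Reg}_\kappa(\PPP)\cap C$. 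As $C$ was arbitrary, $\PPP$ is $\kappa$-stationarily layered.

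For (ii)$\Rightarrow$(i) I argue by contraposition. Since $\kappa$ is already regular and uncountable, failure of (i) means $\kappa$ is not a strong limit, so I may fix the least $\mu<\kappa$ with $2^\mu\ge\kappa$; then $2^\nu<\kappa$ for every $\nu<\mu$, and since $\kappa$ is regular this yields $2^{<\mu}<\kappa$. Fixing distinct $\seq{x_\alpha}{\alpha<\kappa}$ in ${}^{\mu}2$, the goal is to build a ${<}\kappa$-linked partial order $\PPP$ of cardinality $\kappa$ that is \emph{not} $\kappa$-stationarily layered, contradicting (ii). My candidate has conditions $(s,F)$ with $s\in{}^{<\mu}2$ and $F\in[\kappa]^{<\omega}$ subject to $s\subseteq x_\alpha$ for all $\alpha\in F$, ordered by coordinatewise extension. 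Colouring $(s,F)$ by its stem $s$ witnesses ${<}\kappa$-linkedness: there are only $2^{<\mu}<\kappa$ stems, and two conditions with a common stem are compatible (their union is a condition). Moreover $|\PPP|=\kappa$, so by Lemma \ref{lemma:StationaryLayeredKnaster} the failure of layering cannot be detected through the Knaster property and must be read off directly from $\mathrm{Reg}_\kappa(\PPP)$.

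The main obstacle is verifying that $\mathrm{Reg}_\kappa(\PPP)$ is non-stationary. Guided by the observation above, I would produce a single function $F^*:[\PPP]^{<\omega}\to\PPP$ such that every closure point $a$ of $F^*$ carries a maximal antichain that misses a colour class and is dodged there. The mechanism is that $2^\mu\ge\kappa$ forces each such $a$ (being only finitely closed and of size $<\kappa$) to leave some branch $x_\alpha$ uncaptured; the branch condition $(\emptyset,\{\alpha\})$ is incompatible with exactly those conditions whose stem departs from $x_\alpha$, and it lies outside $a$. The delicate point --- and the step I expect to be hardest --- is to arrange a maximal antichain of $a$ all of whose stems depart from $x_\alpha$: the on-branch conditions of $a$ threaten maximality, and ruling them out requires exploiting that $a$ is only \emph{finitely} closed, so that it cannot assemble the $\mu$-much information along $x_\alpha$ needed to cap such an antichain. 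It is plausible that the clean argument requires choosing the $x_\alpha$ more carefully (for instance, so that the branches realised in a small model cannot approximate a fresh $x_\alpha$ past a bounded level), and pinning down this choice is the crux of the whole implication. Finally, the same counterexample, being ${<}\kappa$-linked and hence satisfying the $\kappa$-chain condition, simultaneously feeds the strongly-inaccessible case into the proof of Theorem \ref{thm_WeakCompactEquiv}.
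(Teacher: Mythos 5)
Your implication (ii)$\Rightarrow$(i) has a genuine gap, and it is not just the step you flag as ``the crux'': your candidate poset cannot be made to work by any choice of the branches $x_\alpha$. The structural problem is that your side conditions force the stem to \emph{follow} the branches in $F$, so a condition whose side part contains two splitting branches freezes all further stem growth at their splitting level. Concretely, fix $M\prec\HH{\theta}$ with $\seq{x_\alpha}{\alpha<\kappa}\in M$, ${}^{{<}\mu}2\subseteq M$ and $\kappa\cap M\in\kappa$, and pick $\alpha\notin M$. For $\beta\in M\cap\kappa$ let $d_\beta<\mu$ be least with $x_\beta(d_\beta)\neq x_\alpha(d_\beta)$. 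If $\beta_1,\beta_2\in M\cap\kappa$ satisfy $d_{\beta_1}<d_{\beta_2}$, then the longest common initial segment of $x_{\beta_1}$ and $x_{\beta_2}$ is exactly $x_\alpha\restriction d_{\beta_1}$; hence every extension of $q=(\emptyset,\{\beta_1,\beta_2\})$ has stem an initial segment of $x_\alpha$, and any such condition is compatible with $p=(\emptyset,\{\alpha\})$ (just add $\alpha$ to its side part). So $q\in\PPP\cap M$ is a reduct of $p$, and your intended dodging mechanism fails at exactly the conditions it targets. Moreover such a pair $\beta_1,\beta_2$ \emph{always} exists: if all $M$-branches departed from $x_\alpha$ at one common level $d$, they would all pass through the node $\sigma=(x_\alpha\restriction d)^\frown\langle 1-x_\alpha(d)\rangle\in{}^{{<}\mu}2\subseteq M$, whence by elementarity every member of $X$ would contain $\sigma$, contradicting $x_\alpha(d)\neq\sigma(d)$. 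In particular your proposed repair (choosing $X$ so that $M$-branches cannot approximate a fresh branch past a bounded level) does not help: even with all departure levels bounded, two \emph{distinct} ones suffice to build the frozen reduct. The paper's counterexample inverts the promises precisely to avoid this: conditions are pairs $\langle s_p,a_p\rangle$ where $s_p$ is a finite \emph{sequence} of nodes of ${}^{{<}\lambda}2$ and the promise is that newly added entries \emph{avoid} being initial segments of the branches in $a_p$. Such conditions never freeze, and a putative reduct $q\in M$ of $p=\langle\emptyset,\{x\}\rangle$ is defeated by extending it \emph{inside} $M$ with the entry $x\restriction\alpha$ (a node of $M$, chosen so that $x\restriction\alpha\neq y\restriction\alpha$ for all $y\in a_q$), which is incompatible with $p$ because it \emph{is} an initial segment of $x$.

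Your direction (i)$\Rightarrow$(iii) is essentially a valid alternative to the paper's argument (the paper instead arranges $\POT{\lambda}\subseteq M$, which inaccessibility permits, and extracts a reduct of $p$ from a $q\in M$ with the same colour trace $\Set{c(r)}{r\leq_\PPP p}$), but one step is wrong as stated: for $\kappa>\omega_1$ it is \emph{not} true that every club in $\PP_\kappa(\PPP)$ contains the closure points of a single-valued function $\map{F}{[\PPP]^{<\omega}}{\PPP}$. For instance, for $\kappa=\omega_2$ the club $\Set{a\in\PP_\kappa(\PPP)}{\omega_1\subseteq a}$ contains no such closure-point set, since finitary single-valued functions have countable closure points. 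You need set-valued functions $\map{F}{[\PPP]^{<\omega}}{\PP_\kappa(\PPP)}$ together with $\kappa\cap M\in\kappa$ (so that $F(e)\in M$ and $\betrag{F(e)}<\kappa$ yield $F(e)\subseteq M$), or, more simply, you can quote Lemma \ref{lemma:StationaryLayeredElementarySubmodel}, whose implication from (iii) to (i) was proved for exactly this purpose: a single elementary submodel of the kind your $\lambda$-closed chain construction produces already witnesses stationary layeredness. With that repair, and noting that your preliminary observation about colour classes is never actually used, the forward direction stands; the converse direction, however, needs the paper's different poset design.
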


In light of the above results, it is natural to ask if there is a large cardinal property that corresponds to the statement that every $\kappa$-Knaster partial order is $\kappa$-stationarily layered. The following results show that the question whether this equality characterizes weak compactness is independent from the axioms of set theory. We start by presenting results showing that this assumption yields large cardinal properties intermediate between inaccessibility and weak compactness. The proof of the first part of the following result makes use of Theorem \ref{thm_InaccEquiv} and a theorem of Todor{\v{c}}evi{\'c} from \cite{MR908147} that characterizes Mahlo cardinals through the existence of \emph{special $\kappa$-Aronszajn trees} (see Section \ref{section:Knaster}). The second part of the theorem is proved using Todor{\v{c}}evi{\'c}'s method of \emph{walks on ordinals}. The second conclusion shows that the above assumption has consistency strength greater than the existence of a Mahlo cardinal. Remember that, given an uncountable regular cardinal $\kappa$, a stationary subset $S$ of $\kappa$ \emph{reflects} if there is an $\alpha\in\kappa\cap\Lim$ with the property that $S\cap\alpha$ is stationary in $\alpha$.

\begin{theorem}\label{theorem:KnasterStationary} 
 If $\kappa$ is an uncountable regular cardinal with the property that every $\kappa$-Knaster partial order of cardinality at most $\kappa$ is $\kappa$-stationarily layered, then the following statements hold: 
 \begin{enumerate}
  \item[(i)] $\kappa$ is a Mahlo cardinal. 

  \item[(ii)] Every stationary subset of $\kappa$ reflects. 
 \end{enumerate}
\end{theorem}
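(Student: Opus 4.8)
The plan is to establish both conclusions by contraposition: from the failure of the desired property I would manufacture a $\kappa$-Knaster partial order of cardinality at most $\kappa$ that is \emph{not} $\kappa$-stationarily layered, contradicting the hypothesis. The first task, used for both parts, is to extract strong inaccessibility. Here I would observe that every ${<}\kappa$-linked partial order is already $\kappa$-Knaster: if $\map{c}{\PPP}{\lambda}$ with $\lambda<\kappa$ is injective on antichains, then $c(p)=c(q)$ forces $p$ and $q$ to be compatible (two incompatible conditions form a two-element antichain and so receive distinct colours), whence by the regularity of $\kappa$ a pigeonhole argument refines any $\kappa$-sized set of conditions to a $\kappa$-sized monochromatic, hence pairwise compatible, subset. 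Thus the hypothesis of the theorem implies that every ${<}\kappa$-linked partial order of cardinality at most $\kappa$ is $\kappa$-stationarily layered, and Theorem~\ref{thm_InaccEquiv} then gives that $\kappa$ is strongly inaccessible. This also tells me what kind of counterexample to look for below: since Theorem~\ref{thm_InaccEquiv} already renders \emph{every} ${<}\kappa$-linked partial order layered at our inaccessible $\kappa$, the partial orders I build must be $\kappa$-Knaster but \emph{not} ${<}\kappa$-linked.

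To upgrade inaccessibility to Mahloness, I would argue by contradiction. If $\kappa$ failed to be Mahlo, then Todor\v{c}evi\'c's characterization of Mahlo cardinals in terms of special Aronszajn trees \cite{MR908147} produces a special $\kappa$-Aronszajn tree $T$, say witnessed by $\map{f}{T}{\lambda}$ with $\lambda<\kappa$ taking distinct values on comparable nodes. From $T$ and $f$ I would build a partial order $\PPP$ of cardinality at most $\kappa$ in which the specialization governs compatibility so that a $\lambda$-valued pigeonhole yields $\kappa$-sized pairwise compatible families, giving the $\kappa$-Knaster property, while the absence of a cofinal branch in $T$ prevents $\mathrm{Reg}_\kappa(\PPP)$ from being stationary. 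Concretely, to witness the failure of layering I would exhibit a club in $\PP_\kappa(\PPP)$ each of whose members, viewed as a suborder, carries a maximal antichain that is not maximal in $\PPP$, exploiting that a set of size less than $\kappa$ cannot capture a branch of $T$. The resulting $\PPP$ would be $\kappa$-Knaster but not $\kappa$-stationarily layered, contradicting the hypothesis and forcing $\kappa$ to be Mahlo.

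For the reflection conclusion I would again argue by contraposition, now assuming that some stationary $S\subseteq\kappa$ fails to reflect, and apply Todor\v{c}evi\'c's method of walks on ordinals. Fixing a sequence $\seq{C_\alpha}{\alpha<\kappa}$ that witnesses the non-reflection of $S$, I would read a partial order $\PPP$ of cardinality at most $\kappa$ off the walk characteristics (the trace and the step-counting function). Their coherence and subadditivity are exactly what make a pressing-down and pigeonhole argument deliver the $\kappa$-Knaster property; on the other hand, any regular suborder of size less than $\kappa$ would be forced to reflect the structure of $S$ at the supremum of its ordinals, which non-reflection forbids for club-many candidates, so $\mathrm{Reg}_\kappa(\PPP)$ misses a club in $\PP_\kappa(\PPP)$ and $\PPP$ is not $\kappa$-stationarily layered. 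This contradiction shows that every stationary subset of $\kappa$ reflects.

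I expect the main obstacle in both parts to be the explicit construction of the partial orders and, above all, the verification of non-layering. Once compatibility is tied to a ${<}\kappa$-valued colouring, the $\kappa$-Knaster property follows from a robust pigeonhole; the delicate point is to isolate the correct combinatorial invariant—one whose preservation by a small regular suborder entails either a cofinal branch of $T$ (for Mahloness) or a reflection point of $S$ (for reflection)—and to package the set of suborders violating it as a genuine club in $\PP_\kappa(\PPP)$, so that Lemma~\ref{lemma:StationaryLayeredKnaster} is not contradicted and the counterexample is honestly $\kappa$-Knaster yet unlayered. By comparison, the reduction to inaccessibility and the appeals to Todor\v{c}evi\'c's theorems are routine.
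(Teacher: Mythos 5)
Your high-level architecture does match the paper's: reduce to inaccessibility via Theorem \ref{thm_InaccEquiv} (using that ${<}\kappa$-linked implies $\kappa$-Knaster), get Mahloness from Todor{\v{c}}evi{\'c}'s special-Aronszajn-tree characterization, and get reflection from walks along a $C$-sequence avoiding a non-reflecting stationary set; in both parts the paper's counterexample is the specialization forcing $\PPP(\TTT)$ of Definition \ref{def_SpecializingForcing}, whose non-layeredness is Lemma \ref{lemma:SpecialAronszajnNotLayered}. But the heart of the matter --- verifying that the counterexample is $\kappa$-Knaster --- rests on two errors in your proposal. First, your notion of ``special'' is wrong at inaccessible cardinals: a function $\map{f}{T}{\lambda}$ with $\lambda<\kappa$ that is injective on chains cannot exist on any tree of height $\kappa$, since a node at level $\lambda^+$ already has $\lambda^+$ pairwise comparable predecessors. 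Todor{\v{c}}evi{\'c}'s notion, which the paper uses, is the weaker one: a \emph{regressive} map $\map{r}{\TTT\restriction S}{\TTT}$ together with, for each node $t$, a coloring $\map{c_t}{r^{-1}\{t\}}{\lambda_t}$ with $\lambda_t<\kappa$ injective on chains. Second, your Knaster mechanism --- ``compatibility tied to a ${<}\kappa$-valued colouring'' so that pigeonhole yields $\kappa$-sized compatible families --- is precisely the definition of ${<}\kappa$-linkedness, which you yourself correctly observe the counterexample \emph{cannot} have (by Theorem \ref{thm_InaccEquiv}, every ${<}\kappa$-linked poset is stationarily layered once $\kappa$ is inaccessible). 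So your plan for Knasterness is self-defeating as stated.

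The paper's actual argument (Lemma \ref{lemma:SpecialTreeKnaster}) is genuinely more delicate: it uses the regressive-map form of specialness, requires in addition that the tree does not split at limit levels --- which is why even in part (i) the paper takes the specific walks tree $\TTT(\rho_0^{\vec{C}})$ produced in the proof of Todor{\v{c}}evi{\'c}'s theorem, rather than an arbitrary special $\kappa$-Aronszajn tree --- and proceeds by repeated applications of Fodor's lemma; the resulting compatibility holds only among conditions from a stabilized stationary set whose domains are separated level-wise, which is strictly weaker than linking. Similarly, in part (ii) the paper does not read a poset directly off the walk characteristics: it shows that a $C$-sequence avoiding $S$ makes $S$ nonstationary with respect to $\TTT(\rho_0^{\vec{C}})$ (Lemma \ref{lemma:NonstationaryInWalksTree}), deduces via Proposition \ref{proposition:SSpecialNoCofinalBranches} and inaccessibility that this tree is $\kappa$-Aronszajn, and then quotes the same two lemmas as in part (i). Finally, your non-layering mechanism (``a small suborder cannot capture a branch'') is not quite the one that works: Lemma \ref{lemma:SpecialAronszajnNotLayered} instead exploits that levels are small, so for $t\in\TTT(\kappa\cap M)$ every predecessor of $t$ at a level below $\kappa\cap M$ lies in $M$, and any candidate reduct of $\{\langle t,0\rangle\}$ inside $M$ can be extended \emph{inside $M$} to a condition incompatible with it. Without repairing the specialness notion and replacing the bare pigeonhole by an argument of this kind, your construction does not go through.
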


Seminal results of Jensen (see {\cite[Section 6]{MR0309729}}) show that the above reflection property characterizes weak compactness in G\"odel's constructible universe $\LL$. These result were extended by Zeman in \cite{MR2563821} to a much larger class of canonical inner models. Together with {\cite[Corollary 0.2.]{MR2563821}}, the above theorems yields the following result.

\begin{corollary}\label{corollary:KnasterLayeredExtenderModels}
 Let $\LL[E]$ be a Jensen-style extender model. In $\LL[E]$, the following statements are equivalent for every uncountable regular cardinal $\kappa$: 
 \begin{enumerate}
  \item $\kappa$ is weakly compact. 

  \item Every $\kappa$-Knaster partial order of cardinality at most $\kappa$ is $\kappa$-stationarily layered. \qed
 \end{enumerate}
\end{corollary}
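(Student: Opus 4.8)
The plan is to prove the two implications separately, noting that only one of them actually requires working inside $\LL[E]$. The forward implication, from weak compactness to statement~(2), holds in $\ZFC$ and needs no fine-structural input: if $\kappa$ is weakly compact, then Theorem~\ref{theorem:WeaklyCompactEquivalentChainCondition} tells us that the $\kappa$-Knaster property and $\kappa$-stationary layeredness are equivalent for every partial order, so in particular every $\kappa$-Knaster partial order of cardinality at most $\kappa$ is $\kappa$-stationarily layered. Alternatively, one may quote Theorem~\ref{thm_WeakCompactEquiv}, since a $\kappa$-Knaster partial order satisfies the $\kappa$-chain condition.

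The substantive direction is the converse, and this is where the hypothesis that we work in a Jensen-style extender model $\LL[E]$ is used. The strategy is to combine the purely combinatorial consequences of statement~(2) established earlier with the known fine-structural characterization of weak compactness. Concretely, assume that in $\LL[E]$ every $\kappa$-Knaster partial order of cardinality at most $\kappa$ is $\kappa$-stationarily layered. First, I would apply Theorem~\ref{theorem:KnasterStationary} to conclude that $\kappa$ is a Mahlo cardinal and that every stationary subset of $\kappa$ reflects. Second, I would feed this reflection property into Zeman's inner-model theorem, \cite[Corollary~0.2]{MR2563821}, which extends Jensen's result for $\LL$ to the class of Jensen-style extender models and asserts that in such models a regular cardinal at which every stationary set reflects is already weakly compact. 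This yields that $\kappa$ is weakly compact, completing the equivalence.

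I do not expect a genuine obstacle once the preceding theorems are in place, since the corollary is essentially a packaging of Theorem~\ref{theorem:KnasterStationary} together with an external fine-structural input; the real work has already been carried out in the proof of Theorem~\ref{theorem:KnasterStationary}, via the characterization of Mahlo cardinals by special $\kappa$-Aronszajn trees and the method of walks on ordinals. The one point demanding care is to verify that the reflection hypothesis we derive matches exactly the hypothesis of \cite[Corollary~0.2]{MR2563821}, and that the conclusion of Theorem~\ref{theorem:KnasterStationary} supplies enough, so that no additional large-cardinal assumption leaks in; here it is reassuring that the theorem delivers both Mahloness and full stationary reflection. It is worth emphasising that the restriction to $\LL[E]$ is indispensable for the converse alone: in an arbitrary model of $\ZFC$, stationary reflection cannot be upgraded to weak compactness, which is consistent with the later results of the paper showing that statement~(2) may hold at a non-weakly-compact cardinal.
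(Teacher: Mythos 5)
Your proposal is correct and follows exactly the paper's own route: the forward direction is the ZFC-provable Theorem \ref{theorem:WeaklyCompactEquivalentChainCondition} (or Theorem \ref{thm_WeakCompactEquiv}), and the converse combines Theorem \ref{theorem:KnasterStationary} with Zeman's fine-structural result {\cite[Corollary 0.2]{MR2563821}}, which is precisely why the paper states the corollary with no further proof. Your remarks on where the $\LL[E]$ hypothesis is genuinely needed, and on matching the reflection hypothesis to Zeman's theorem, are accurate.
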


In contrast, we will show that consistently there can be a non-weakly compact cardinal with the property that every $\kappa$-Knaster partial order is $\kappa$-stationarily layered. The proof of the following theorem shows that such cardinals exist in a model constructed by Kunen in \cite{MR495118}.

\begin{theorem}\label{theorem:KunenModelNonWeaklyCompactKnasterLayered}
 If $\kappa$ is a weakly compact cardinal, then there is a partial order $\PPP$ such that the following statements hold in $\VV[G]$ whenever $G$ is $\PPP$-generic over $\VV$. 
 \begin{enumerate}
  \item $\kappa$ is inaccessible and not weakly compact. 

  \item Every $\kappa$-Knaster partial order is $\kappa$-stationarily layered. 

  \item For every $\nu<\kappa$, the class of $\kappa$-Knaster partial orders is closed under $\nu$-support products.
 \end{enumerate} 
\end{theorem}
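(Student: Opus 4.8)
The plan is to take $\PPP$ to be the standard ${<}\kappa$-closed forcing that adds a $\kappa$-Souslin tree by initial approximations, following Kunen's construction in \cite{MR495118}: its conditions are normal trees of successor height below $\kappa$ ordered by end-extension, and the generic filter $G$ yields a $\kappa$-Souslin tree $T$. Since $\kappa$ is inaccessible we have $2^{<\kappa}=\kappa$, so $\PPP$ has cardinality $\kappa$ and is therefore $\kappa^+$-cc; being ${<}\kappa$-closed it adds no new ${<}\kappa$-sequences, and together these facts show that $\PPP$ preserves cardinals, cofinalities and the inaccessibility of $\kappa$. As $T$ is a $\kappa$-Aronszajn tree in $\VV[G]$, the tree property fails at $\kappa$ and hence $\kappa$ is no longer weakly compact there. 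This establishes conclusion (i), and it reduces conclusions (ii) and (iii) to a single transfer principle that I develop next.

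The crucial point, and the place where Kunen's construction does its work, is the behaviour of the forcing $\QQQ_T$ with the generic tree itself (conditions being nodes of $T$ ordered by the tree order). This forcing is $\kappa$-cc because $T$ is Souslin, and it is ${<}\kappa$-distributive because $T$ is a normal $\kappa$-tree; writing $\VV[G][b]$ for the extension by a $\QQQ_T$-generic branch $b$, in particular no new ${<}\kappa$-sequences are added over $\VV[G]$. I claim that $\kappa$ is \emph{again} weakly compact in $\VV[G][b]$. To see this one lifts an arbitrary ground model embedding $\map{j}{M}{N}$ with critical point $\kappa$, where $M$ is a $\kappa$-model containing $\PPP$: the branch $b$ allows one to build, inside $\VV[G][b]$, an $N$-generic filter for $j(\PPP)$ extending $j[G]=G$ whose union has $b$ as a cofinal branch, using the ${<}j(\kappa)$-closure of $j(\PPP)$ in $N$ and the fact that $\VV[G][b]$ adds no ${<}\kappa$-sequences, so that the $\kappa$-many dense subsets of $j(\PPP)$ lying in $N$ can be met along a descending sequence guided by $b$; the same branch then lifts $j$ through the image of $\QQQ_T$. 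The contrast with $\VV[G]$ is exactly that no such branch is available there, which is why weak compactness genuinely fails after $\PPP$ but is restored by the further $\kappa$-cc forcing $\QQQ_T$.

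For conclusion (ii), let $\QQQ\in\VV[G]$ be a $\kappa$-Knaster partial order. Since $\QQQ$ is $\kappa$-Knaster and $\QQQ_T$ satisfies the $\kappa$-chain condition, the product $\QQQ\times\QQQ_T$ satisfies the $\kappa$-chain condition in $\VV[G]$, as recalled in the introduction, and a standard argument with $\QQQ_T$-names then shows that $\QQQ$ still satisfies the $\kappa$-chain condition in $\VV[G][b]$. As $\kappa$ is weakly compact in $\VV[G][b]$, Theorem \ref{theorem:WeaklyCompactEquivalentChainCondition} shows that $\QQQ$ is $\kappa$-stationarily layered in $\VV[G][b]$. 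Finally, because $\QQQ_T$ is ${<}\kappa$-distributive it adds no new elements to $\PP_\kappa(\QQQ)$ and the property of being a regular suborder of $\QQQ$ is absolute between the two models, so $\mathrm{Reg}_\kappa(\QQQ)$ is the same set in each; since every club of $\PP_\kappa(\QQQ)$ in $\VV[G]$ remains a club in $\VV[G][b]$, its stationarity transfers down, and $\QQQ$ is $\kappa$-stationarily layered in $\VV[G]$. This is precisely where the Knaster hypothesis is essential: a $\kappa$-cc poset that is not $\kappa$-Knaster, such as $T$ itself, may lose its chain condition after forcing with $\QQQ_T$, which is consistent with its failing to be layered in $\VV[G]$.

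Conclusion (iii) follows by the same transfer. Given $\nu<\kappa$ and $\kappa$-Knaster partial orders $\seq{\QQQ_\gamma}{\gamma<\lambda}$ in $\VV[G]$, each $\QQQ_\gamma$ still satisfies the $\kappa$-chain condition in $\VV[G][b]$, hence is $\kappa$-Knaster there by Theorem \ref{theorem:WeaklyCompactEquivalentChainCondition}; Proposition \ref{proposition:WCyieldsCkappa}, applied in $\VV[G][b]$, then shows that the $\nu$-support product is $\kappa$-Knaster, and in particular $\kappa$-stationarily layered, in $\VV[G][b]$. Transferring stationary layeredness down to $\VV[G]$ exactly as above and applying Lemma \ref{lemma:StationaryLayeredKnaster} shows that this product is $\kappa$-Knaster in $\VV[G]$. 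I expect the main obstacle to be the weak-compactness-preservation argument of the second paragraph: one must simultaneously destroy the tree property at $\kappa$ in $\VV[G]$ and arrange that the single further $\kappa$-cc forcing $\QQQ_T$ resurrects enough elementary embeddings to recover weak compactness in $\VV[G][b]$. The delicate steps are the verification that $\QQQ_T$ is ${<}\kappa$-distributive and the construction, inside $\VV[G][b]$, of the lifted $N$-generic filters guided by $b$; these are exactly the points at which the Souslin and normality properties of the generic tree are used.
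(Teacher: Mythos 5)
Your overall strategy---destroy weak compactness by generically adding a $\kappa$-Souslin tree $T$, resurrect it by forcing with $T$, and pull stationary layeredness of $\kappa$-Knaster posets down from $\VV[G][b]$ to $\VV[G]$ using ${<}\kappa$-distributivity---is the same as the paper's, and your third and fourth paragraphs (the downward transfer of the chain condition, the invariance of $\PP_\kappa(\QQQ)$, $\mathrm{Reg}_\kappa(\QQQ)$ and of clubs, and the product argument for (iii)) are essentially correct and mirror what the paper does via its auxiliary normal filter $\calF$. The fatal gap is in your second paragraph. You take $\PPP$ to be the \emph{unprepared} tree-adding forcing, and your lifting argument for weak compactness in $\VV[G][b]$ cannot work: your $j:M\to N$ is a ground-model embedding, so $N$ is a transitive set lying in $\VV$, and you propose to build an $N$-generic filter $h$ for $j(\PPP)$ with $G=j[G]\subseteq h$. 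But for every $\beta<j(\kappa)$ the set of conditions of height at least $\beta$ is a dense subset of $j(\PPP)$ belonging to $N$, so $h$ must contain a condition $t$ of height at least $\kappa+1$. Such a $t$ is compatible with, and hence end-extends, every element of $G$, so the restriction of $t$ to levels below $\kappa$ is exactly $T=\bigcup G$. Since $t\in N\subseteq\VV$, this puts $T$ (and hence $G$) into $\VV$, a contradiction. So no $N$-generic filter extending $G$ exists in \emph{any} outer model; the branch $b$ cannot help, and no rearrangement of this argument over a ground-model target $N$ can succeed.

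Moreover, this is a genuine mathematical obstruction, not a presentational one. By Kunen's observation, the two-step iteration of your $\PPP$ followed by the branch forcing has a dense subset isomorphic to $\Add{\kappa}{1}$, so your claim that $\kappa$ is weakly compact in $\VV[G][b]$ is precisely the claim that $\Add{\kappa}{1}$ preserves the weak compactness of $\kappa$---and that is not a theorem of $\ZFC$. For example, by Hamkins' small-forcing superdestructibility theorem, if one first adds a single Cohen real (which preserves weak compactness by L\'{e}vy--Solovay), then every ${<}\kappa$-closed forcing adding a new subset of $\kappa$, in particular $\Add{\kappa}{1}$, destroys the weak compactness of $\kappa$. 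Since the theorem must hold for an arbitrary weakly compact $\kappa$ in an arbitrary $\VV$, your choice of $\PPP$ simply does not satisfy the conclusion as proved. This is exactly why the paper's $\PPP$ is Kunen's full construction from {\cite[Section 3]{MR495118}}: a preparatory reverse-Easton iteration below $\kappa$ making the weak compactness of $\kappa$ indestructible under $\Add{\kappa}{1}$, followed by the tree-adding forcing. The preparation is what rescues the lifting: $j(\PPP)$ then factors on the $N$-side so that the stage-$\kappa$ generic (equivalently $G*b$) is absorbed into an $N$-side model of the form $N[G][b][H_{\mathrm{tail}}]$, which---unlike your $N$---contains the generic object and hence the required master condition. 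Once $\PPP$ is replaced by the prepared poset, your transfer machinery in the last two paragraphs does go through and yields the theorem.
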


We outline the structure of this paper. Section \ref{sec_Prelims} includes the relevant background material. In Section \ref{section:FLayeredPoset}, we show that classes of completely $\calF$-layered partial orders satisfy the requirements listed above. Section \ref{section:FilterLargeCardinals} contains the proof of Theorem \ref{theorem:WeaklyCompactEquivalentChainCondition} and an analogous result for supercompact cardinals. The characterizations of inaccessible and weakly compact cardinals stated above are proven in Section \ref{section:CharacterizeLarge}. Moreover, we will prove Theorem \ref{theorem:KunenModelNonWeaklyCompactKnasterLayered} in this section. Section \ref{section:Knaster} contains the proof of Theorem \ref{theorem:KnasterStationary}. In Section \ref{sec_Questions}, we conclude the paper with some questions raised by the above results.


\section{Preliminaries}\label{sec_Prelims}

We start by recalling definitions and previous results relevant for the proofs of the above theorems.

A map $\map{e}{\PPP}{\QQQ}$ between partial orders is called a \emph{regular embedding} if $e$ is order and incompatibility preserving, and whenever $A$ is a maximal antichain in $\PPP$ then $e[A]$ is a maximal antichain in $\QQQ$. 
The following well-known observation provides an alternative characterization of regular embeddings that is used throughout this paper. Its short proof can be found, for example, in {\cite[Section 2]{Cox_Layerings}}.

\begin{proposition}
 The following statements are equivalent for order and incompatibility preserving map $\map{e}{\PPP}{\QQQ}$ between partial orders: 
 \begin{enumerate}
  \item $e$ is a regular embedding. 

  \item For every $q\in\QQQ$, there is $p\in\PPP$ such that for all $p^\prime\leq_\PPP p$, the conditions $e(p^\prime)$ and $q$ are compatible in $\QQQ$. 
 \end{enumerate}
\end{proposition}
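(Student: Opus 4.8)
The plan is to prove the two implications separately, thinking of the condition in statement (ii) as asserting that every $q\in\QQQ$ admits a \emph{reduction} $p\in\PPP$, that is, a condition below which every image is compatible with $q$. The only ingredient beyond the hypotheses is the standard fact that every dense subset of a partial order contains a maximal antichain of the whole order, which follows from a routine Zorn's lemma argument: a maximal antichain lying \emph{inside} a dense set $D$ is automatically maximal in the ambient order, since any condition incompatible with all of it can be refined into $D$.

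For the implication from (i) to (ii) I would argue by contraposition. If (ii) fails for some $q\in\QQQ$, then the set $D=\Set{p^\prime\in\PPP}{e(p^\prime)\text{ and }q\text{ are incompatible}}$ is dense in $\PPP$. Extracting a maximal antichain $A\subseteq D$ of $\PPP$, the regularity of $e$ makes $e[A]$ a maximal antichain of $\QQQ$; but by the choice of $D$ every element of $e[A]$ is incompatible with $q$, which contradicts the maximality of $e[A]$. Hence $e$ cannot be a regular embedding.

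For the converse, assume (ii) and let $A$ be a maximal antichain of $\PPP$. Since $e$ preserves incompatibility, $e[A]$ is an antichain of $\QQQ$, and it remains only to verify maximality. Suppose toward a contradiction that some $q\in\QQQ$ is incompatible with every element of $e[A]$, and let $p\in\PPP$ be a reduction of $q$ as provided by (ii). By maximality of $A$, the condition $p$ is compatible with some $a\in A$; fixing a common lower bound $s\leq_\PPP p,a$, the defining property of $p$ yields that $e(s)$ and $q$ are compatible, while order-preservation gives $e(s)\leq_\QQQ e(a)$. Chasing a witness of the compatibility of $e(s)$ and $q$ below $e(a)$ then shows that $e(a)$ and $q$ are compatible, contradicting the choice of $q$. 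Thus $e[A]$ is maximal and $e$ is a regular embedding.

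The arguments are short, and I expect the only genuinely delicate points to be the two antichain manipulations: verifying that a maximal antichain drawn from the dense set $D$ is still maximal in all of $\PPP$, and carefully tracking compatibility through $e$ in the converse using only order- and incompatibility-preservation rather than the full regularity that is being established.
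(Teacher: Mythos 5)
Your proof is correct and is exactly the standard argument: for (i)$\Rightarrow$(ii) you use that the set of $p'$ with $e(p')$ incompatible with $q$ would be dense if $q$ had no reduct, extract a maximal antichain from that dense set, and contradict maximality of its image; for (ii)$\Rightarrow$(i) you chase a common lower bound through a reduct, using only order- and incompatibility-preservation. The paper itself omits the proof (deferring to Section 2 of the cited work of Cox), and your argument coincides with that standard one, so there is nothing to flag.
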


In the above situation, the (possibly non-unique) condition $p$ is called an \emph{$e$-reduct of $q$ into $\mathbb{P}$}. 
Given partial orders $\PPP\subseteq\QQQ$, we say that $\PPP$ is a \emph{suborder} of $\QQQ$ if the inclusion map $\id_\PPP$ is incompatibility preserving and we say that $\PPP$ is a  \emph{regular suborder} of $\QQQ$ if $\id_\PPP$ is a regular embedding. 
In the later case, we just say \emph{reduct} instead of $\id_\PPP$-reduct.  The variations of the following basic observation will be used throughout this paper.

\begin{proposition}\label{proposition:REgularClosureChainCondition}
 Let $\kappa$ be an uncountable regular cardinal and let $\lambda\geq\kappa$ be a cardinal with $\lambda=\lambda^{{<}\kappa}$. If $\QQQ$ is a partial order that satisfies the $\kappa$-chain condition,  $P$ is a subset of $\QQQ$ of cardinality at most $\lambda$ and $C$ is a club in $\PP_\kappa(\QQQ)$, then there is a regular suborder $\PPP$ of $\QQQ$ of cardinality at most $\lambda$ such that $P\subseteq\PPP$ and $C\cap\PP_\kappa(\PPP)$ is club in $\PP_\kappa(\PPP)$.  
\end{proposition}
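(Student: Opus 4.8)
The plan is to realize $\PPP$ as $M\cap\QQQ$ for a carefully chosen elementary submodel $M$, using the hypothesis $\lambda=\lambda^{{<}\kappa}$ precisely to guarantee enough closure of $M$ while keeping its size equal to $\lambda$. Fix a regular cardinal $\theta$ large enough that $\QQQ,P,C\in\HH{\theta}$, together with a well-ordering $<^*$ of $\HH{\theta}$. I would build an elementary submodel $M\prec(\HH{\theta},\in,<^*)$ with $\{\QQQ,P,C\}\cup P\cup(\kappa+1)\subseteq M$, with $|M|=\lambda$, and with $M$ closed under ${<}\kappa$-sized subsets (i.e.\ every subset of $M$ of size ${<}\kappa$ is an element of $M$). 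Such an $M$ arises as the union of a $\subseteq$-increasing continuous chain $\seq{M_\xi}{\xi<\kappa}$ of elementary submodels of size $\lambda$, where $M_0$ contains $\{\QQQ,P,C\}\cup P\cup(\kappa+1)$ and $M_{\xi+1}$ contains every subset of $M_\xi$ of size ${<}\kappa$; here $\lambda=\lambda^{{<}\kappa}$ ensures each $M_\xi$ can be kept of size $\lambda$, and the regularity of $\kappa$ yields the desired closure of $M=\bigcup_{\xi<\kappa}M_\xi$. Setting $\PPP:=M\cap\QQQ$, the requirements $P\subseteq\PPP$ and $|\PPP|\le\lambda$ are immediate.

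Next I would verify that $\PPP$ is a regular suborder of $\QQQ$. First, compatibility is absolute between $\PPP$ and $\QQQ$: any common lower bound in $\QQQ$ of two conditions $p,p'\in\PPP$ reflects into $M\cap\QQQ=\PPP$ by elementarity (with parameters $\QQQ,p,p'\in M$), so two elements of $\PPP$ are compatible in $\PPP$ if and only if they are compatible in $\QQQ$. In particular the inclusion preserves incompatibility, and the antichains of $\PPP$ are exactly the antichains of $\QQQ$ contained in $\PPP$. Now let $A$ be a maximal antichain of $\PPP$. By the $\kappa$-chain condition $|A|<\kappa$, and since $A\subseteq\PPP\subseteq M$, the closure of $M$ gives $A\in M$. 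If some $q\in\QQQ$ were incompatible with every member of $A$, then $\HH{\theta}$ would satisfy $\exists q\in\QQQ\;\forall a\in A\,(q\perp a)$ with parameters $A,\QQQ\in M$, so by elementarity such a $q$ would exist in $M\cap\QQQ=\PPP$; by absoluteness of compatibility this $q$ would be incompatible in $\PPP$ with every element of $A$, contradicting maximality of $A$ in $\PPP$. Hence $A$ is maximal in $\QQQ$, so $\PPP$ is a regular suborder.

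Finally I would check that $C\cap\PP_\kappa(\PPP)$ is club in $\PP_\kappa(\PPP)$. Closedness is automatic: the union of a $\subseteq$-increasing chain of length ${<}\kappa$ from $C\cap\PP_\kappa(\PPP)$ lies in $C$ (as $C$ is closed) and is a subset of $\PPP$ of size ${<}\kappa$ (as $\kappa$ is regular), hence lies in $C\cap\PP_\kappa(\PPP)$. For unboundedness, given $a\in\PP_\kappa(\PPP)$ we have $a\subseteq\PPP\subseteq M$ with $|a|<\kappa$, so $a\in M$ by closure; unboundedness of $C$ in $\PP_\kappa(\QQQ)$ together with elementarity (parameters $C,a\in M$) yields $b\in C\cap M$ with $a\subseteq b$. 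Since $b\in M$, $|b|<\kappa$, and $\kappa+1\subseteq M$, enumerating $b$ by a surjection from an ordinal ${<}\kappa$ inside $M$ forces $b\subseteq M$, so $b\subseteq\PPP$ and thus $b\in C\cap\PP_\kappa(\PPP)$ with $a\subseteq b$.

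I expect the main obstacle to be the construction of a ${<}\kappa$-closed elementary submodel of size exactly $\lambda$, which is exactly where $\lambda=\lambda^{{<}\kappa}$ is indispensable; without it the closure step would inflate the cardinality past $\lambda$. The other delicate point is the reflection of maximal antichains in the regularity argument, which relies on the combination of the $\kappa$-chain condition (to bound antichains below $\kappa$) and the closure of $M$ (to place those antichains into $M$ so that elementarity applies).
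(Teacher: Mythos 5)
Your proof is correct, but it takes a recognizably different route from the paper's. The paper works directly with the partial order: it recursively builds an increasing continuous chain $\seq{\PPP_\alpha}{\alpha\leq\kappa}$ of suborders of $\QQQ$ of size at most $\lambda$, where $\PPP_{\alpha+1}$ explicitly seals each maximal antichain of $\PPP_\alpha$ (extends it to a maximal antichain of $\QQQ$ contained in $\PPP_{\alpha+1}$) and catches, above each $a\in\PP_\kappa(\PPP_\alpha)$, an element of $C$ contained in $\PPP_{\alpha+1}$; the union $\PPP_\kappa$ then works because, by the $\kappa$-chain condition and the regularity of $\kappa$, every maximal antichain of $\PPP_\kappa$ already lies inside some $\PPP_\alpha$, where it has been sealed. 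You instead realize $\PPP$ as $M\cap\QQQ$ for a single elementary submodel $M\prec\HH{\theta}$ of size $\lambda$ that is closed under ${<}\kappa$-sized subsets, and you derive both regularity and the club-trace property from elementarity plus this closure. The two arguments consume the hypotheses in exactly the same way ($\lambda=\lambda^{{<}\kappa}$ for cardinality control of the length-$\kappa$ chain, the $\kappa$-chain condition to bound antichains below $\kappa$, and regularity of $\kappa$ so that every ${<}\kappa$-sized set is caught at some stage), but your version outsources the bookkeeping to standard elementary-submodel facts: maximality in $\QQQ$ of a maximal antichain $A$ of $\PPP$ holds because $A\in M$ and any counterexample to maximality would reflect into $M\cap\QQQ=\PPP$, and unboundedness of $C\cap\PP_\kappa(\PPP)$ holds because a witness $b\in C\cap M$ with $\betrag{b}<\kappa$ satisfies $b\subseteq M$ (using $\kappa\subseteq M$). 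What your approach buys is a shorter, more conceptual verification in which all three required properties flow from one closure fact about $M$; what the paper's approach buys is self-containedness --- no Skolem hulls, no well-ordering of $\HH{\theta}$, no need to have antichains as \emph{elements} of anything --- and an isolation of precisely the two catching properties that make the union work, which is the template reused elsewhere in the paper (e.g.\ in the proof of Lemma \ref{lemma:WeaklyCompactFwcLayered}).
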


\begin{proof}
 By our assumptions, there is an increasing continuous sequence $\seq{\PPP_\alpha}{\alpha\leq\kappa}$ of suborders of $\QQQ$ of cardinality at most $\lambda$ such that the following statements hold for all $\alpha<\kappa$: 
 \begin{enumerate}
  \item $P\subseteq\PPP_\alpha$. 

  \item If $A$ is a maximal antichain in $\PPP_\alpha$, then there is a maximal antichain $\bar{A}$ in $\QQQ$ with $A\subseteq\bar{A}\subseteq\PPP_{\alpha+1}$. 

  \item If $a\in\PP_\kappa(\PPP_\alpha)$, then there is $c\in C$ with $a\subseteq c\subseteq\PPP_{\alpha+1}$. 
 \end{enumerate}

 Let $A$ be a maximal antichain in $\PPP_\kappa$. Since $\PPP_\kappa$ is a suborder of $\QQQ$, we know that $A$ is an antichain in $\QQQ$ and $\betrag{A}<\kappa$. Then there is $\alpha<\kappa$ with $A\subseteq\PPP_\alpha$. Since $A$ is a maximal antichain in $\PPP_\alpha$, there is a maximal antichain $\bar{A}$ in $\QQQ$ with $A\subseteq\bar{A}\subseteq\PPP_{\alpha+1}\subseteq\PPP_\kappa$. Then $\bar{A}$ is an antichain in $\PPP_\kappa$ and we can conclude that $A=\bar{A}$ is also a maximal antichain in $\QQQ$. This shows that $\PPP_\kappa$ is a regular suborder of $\QQQ$. Finally, our construction and the regularity of $\kappa$ ensure that $C\cap\PP_\kappa(\PPP)$  is unbounded in $\PP_\kappa(\PPP_\kappa)$. This yields the statement of the proposition, because $C\cap\PP_\kappa(\PPP)$  is obviously closed in $\PP_\kappa(\PPP_\kappa)$. 
\end{proof}

In this paper, we use Jech's notion of stationarity in $\PP_\kappa(A)$: a subset $S$ of $\PP_\kappa(A)$ is \emph{stationary in $\PP_\kappa(A)$} if it meets every subset of $\PP_\kappa(A)$ which is $\subseteq$-continuous and cofinal in $\PP_\kappa(A)$. 
We use this notion of stationarity rather than the generalized version of Shelah, because the stronger notion seems to be needed to prove Lemma \ref{lemma:StationaryLayeredKnaster} and Lemma \ref{lemma:SpecialAronszajnNotLayered}. If $\kappa = \aleph_1$, then both notions of stationarity are equivalent, but for $\kappa > \aleph_1$ certain instances of Chang's Conjecture may cause them to differ (see \cite{MR2768692}).

The next lemma provides useful characterizations of stationary layeredness. In many arguments, we will just need the assumption that there is some regular cardinal $\theta>\kappa$ and some elementary submodel $M$ of $\HH{\theta}$ with the above properties to derive certain consequence of stationary layeredness. The following lemma shows that this assumption is equivalent to stationary layeredness.

\begin{lemma}\label{lemma:StationaryLayeredElementarySubmodel}
The following statements are equivalent for every uncountable regular cardinal $\kappa$ and every partial order $\PPP$: 

 \begin{enumerate}
  \item[(i)] $\PPP$ is $\kappa$-stationarily layered. 

  \item[(ii)] For every regular cardinal $\theta>\kappa$ with $\PPP\in\HH{\theta}$, the collection of all elementary substructures $M$ of $\HH{\theta}$ with $\betrag{M}<\kappa$, $\kappa\cap M\in \kappa$ and $\PPP\cap M\in\mathrm{Reg}_\kappa(\PPP)$ is stationary in $\PP_\kappa(\HH{\theta})$. 

  \item[(iii)] There is a regular cardinal $\theta$ with $\PP_\kappa(\PP)\in\HH{\theta}$ and an elementary substructure $M$ of $\HH{\theta}$ with $\betrag{M}<\kappa$, $\PPP\in M$, $\kappa\cap M\in \kappa\in M$ and $\PPP\cap M\in\mathrm{Reg}_\kappa(\PPP)$.  
 \end{enumerate} 
\end{lemma}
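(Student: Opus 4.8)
The plan is to prove the equivalences in the cyclic order $(i)\Rightarrow(ii)\Rightarrow(iii)\Rightarrow(i)$, exploiting the standard dictionary between stationarity in $\PP_\kappa(A)$ and elementary submodels. First I would recall the well-known fact (part of Jech's theory of stationary sets in $\PP_\kappa(A)$) that for any structure $\HH{\theta}$ in a countable language, the collection of elementary submodels $M\prec\HH{\theta}$ with $\betrag{M}<\kappa$ and $\kappa\cap M\in\kappa$ contains a club in $\PP_\kappa(\HH{\theta})$; this will let me freely intersect the ``nice submodel'' condition with any stationary set without leaving the stationary realm.

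For $(i)\Rightarrow(ii)$, fix a regular $\theta>\kappa$ with $\PPP\in\HH{\theta}$. Given any club $D\subseteq\PP_\kappa(\HH{\theta})$, I want an $M\in D$ that is elementary, has $\betrag{M}<\kappa$, $\kappa\cap M\in\kappa$, and $\PPP\cap M\in\mathrm{Reg}_\kappa(\PPP)$. The idea is to project $D$ down to a club on $\PP_\kappa(\PPP)$: the set $\Set{M\cap\PPP}{M\in D,\ M\prec\HH{\theta},\ \betrag{M}<\kappa,\ \kappa\cap M\in\kappa}$ contains a club $C$ in $\PP_\kappa(\PPP)$ (using continuity and cofinality, together with the club of nice submodels above). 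Since $\PPP$ is $\kappa$-stationarily layered, $\mathrm{Reg}_\kappa(\PPP)$ meets $C$, so there is a nice $M\in D$ with $M\cap\PPP=\PPP\cap M\in\mathrm{Reg}_\kappa(\PPP)$. As $D$ was an arbitrary club, the desired collection is stationary in $\PP_\kappa(\HH{\theta})$.

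The implication $(ii)\Rightarrow(iii)$ is essentially immediate: $(ii)$ produces, for any suitable $\theta$, a stationary-hence-nonempty collection of such $M$, and any single witness $M$ (after arranging $\PPP\in M$ and $\kappa\in M$, which holds automatically once $\kappa,\PPP\in\HH{\theta}$ are in the defining parameters of the relevant club, or by intersecting with the club of models containing these fixed parameters) gives the single model required in $(iii)$; one just chooses $\theta$ large enough that $\PP_\kappa(\PPP)\in\HH{\theta}$. For $(iii)\Rightarrow(i)$, I would suppose toward a contradiction that $\PPP$ is not $\kappa$-stationarily layered, so there is a $\subseteq$-continuous cofinal $F\subseteq\PP_\kappa(\PPP)$ disjoint from $\mathrm{Reg}_\kappa(\PPP)$. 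Given the model $M$ from $(iii)$ with $\PPP,\kappa,F\in M$, $\betrag{M}<\kappa$ and $\kappa\cap M\in\kappa$, the standard reflection argument shows $\PPP\cap M\in F$: by elementarity and continuity of $F$, the set $\PPP\cap M$ is the union of an $\subseteq$-increasing chain of elements of $F$ indexed by $\kappa\cap M$, hence lies in $F$ by continuity. But then $\PPP\cap M\in F$ is disjoint from $\mathrm{Reg}_\kappa(\PPP)$, contradicting $\PPP\cap M\in\mathrm{Reg}_\kappa(\PPP)$.

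The main obstacle will be the careful bookkeeping in $(iii)\Rightarrow(i)$ showing that $\PPP\cap M\in F$, i.e.\ verifying that $\PPP\cap M$ is genuinely the continuous union along the ordinal $\kappa\cap M$ of an increasing sequence drawn from $F$ and lying inside $M$. This requires using that $F$ is cofinal and continuous, that $F\in M$, and that $M\cap\PPP$ is itself the increasing union of its initial segments of length $<\kappa\cap M$; the delicate point is ensuring the relevant sequence witnessing cofinality can be chosen inside $M$ so that elementarity applies, and that continuity of $F$ at the limit ordinal $\kappa\cap M$ yields membership rather than mere approximation.
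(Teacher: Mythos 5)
Your overall architecture -- $(i)\Rightarrow(ii)$ by standard lifting/projecting, $(ii)\Rightarrow(iii)$ trivially, and $(iii)\Rightarrow(i)$ by reflecting a witness to non-stationarity of $\mathrm{Reg}_\kappa(\PPP)$ into the model $M$ -- is the same as the paper's, and the first two implications are fine. The problems are in $(iii)\Rightarrow(i)$. First, you cannot say ``given the model $M$ from $(iii)$ with $\PPP,\kappa,F\in M$'': statement $(iii)$ hands you a single fixed model $M$, chosen before anyone sees the club $F$, and you have no control over which subsets of $\PP_\kappa(\PPP)$ it contains. The correct move is to reflect the \emph{statement} of non-stationarity: since $\PP_\kappa(\PPP)\in\HH{\theta}$ (this is exactly why $(iii)$ demands this rather than merely $\PPP\in\HH{\theta}$ -- a hypothesis your sketch never uses), every candidate club is an element of $\HH{\theta}$, so ``$\mathrm{Reg}_\kappa(\PPP)$ is non-stationary in $\PP_\kappa(\PPP)$'' is absolute between $\HH{\theta}$ and $\VV$, and elementarity then gives a witness \emph{inside} $M$.

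Second, and more seriously, the step you yourself flag as the main obstacle -- that $\PPP\cap M\in F$ -- is left unresolved, and the route you propose cannot work. A witnessing chain ``lying inside $M$'' is impossible: any $\subseteq$-increasing sequence of elements of $F$ that is an \emph{element} of $M$ has its union in $M$, whereas $\PPP\cap M\notin M$ whenever $\PPP\nsubseteq M$. So the chain must be external; but the natural external recursion stalls at limit stages, because after taking a union one leaves $M$ and elementarity can no longer supply the next element of $F\cap M$, and a directed family of size ${<}\kappa$ (which is all that $F\cap M$ is) need not contain a cofinal chain at all. What is true is that clubs in Jech's sense are closed under directed unions of size ${<}\kappa$ -- provable by induction on the cardinality of the directed family -- and since $F\cap M$ is directed with $\bigcup(F\cap M)=\PPP\cap M$, this would rescue your argument; but that closure property is a genuine lemma you would have to prove. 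The paper sidesteps the issue entirely by working with Menas' characterization of non-stationarity (the result it cites from {\cite[Section 1]{MR0357121}}): non-stationarity of $\mathrm{Reg}_\kappa(\PPP)$ yields a function $\map{F}{\PP_\omega(\PPP)}{\PP_\kappa(\PPP)}$ such that no set closed under $F$ lies in $\mathrm{Reg}_\kappa(\PPP)$; by elementarity $M$ contains such a function $F_M$, and $\PPP\cap M$ is closed under $F_M$ for trivial reasons (finite subsets of $M$ are elements of $M$, and each value $F_M(e)\in M$ has size ${<}\kappa$, hence is a subset of $M$ because $\kappa\cap M\in\kappa$), giving the contradiction at once. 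Your proof needs either this device or the directed-union lemma; as written, it has a genuine gap.
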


\begin{proof}
 The equivalence between (i) and (ii) follows from standard arguments on lifting and projecting stationary sets (see {\cite[Section 1]{MR0357121}}) and the fact that the collection of all elementary submodels $M$ of $\HH{\theta}$ with $\betrag{M}<\kappa$ and $\kappa\cap M\in\kappa$ forms a club in $\PP_\kappa(\HH{\theta})$.

 Now, assume that $\mathrm{Reg}_\kappa(\PPP)$ is not stationary in $\PP_\kappa(\PPP)$. By the results of {\cite[Section 1]{MR0357121}}, there is a function $\map{F}{\PP_\omega(\PPP)}{\PP_\kappa(\PPP)}$ such that $a\notin\mathrm{Reg}_\kappa(\PPP)$ holds for every $a\in\PP_\kappa(\PPP)$ with $F[\PP_\omega(a)]\subseteq\PP_\kappa(a)$. Let $\theta$ and $M$ be as above. Then $F\in\HH{\theta}$. By elementarity, the model $M$ contains a function $\map{F_M}{\PP_\omega(\PPP)}{\PP_\kappa(\PPP)}$ with the above properties. Given $e\in\PP_\omega(\PPP\cap M)$, we have $e\in M$, $F_M(e)\in M$ and therefore $F_M(e)\subseteq\PPP\cap M$. By the properties of $F_M$, this implies that $\PPP\cap M\notin\mathrm{Reg}_\kappa(\PPP)$. 
\end{proof}

Using Lemma \ref{lemma:StationaryLayeredElementarySubmodel}, it is possible to give a short prove of Lemma \ref{lemma:StationaryLayeredKnaster}.  The following proof differs somewhat from the proof presented in \cite{Cox_Layerings}.

\begin{proof}[Proof of Lemma \ref{lemma:StationaryLayeredKnaster}]
 Let $\vec{p}=\seq{p_\alpha}{\alpha<\kappa}$ be an injective enumeration of conditions in $\PPP$ and let $\theta>\kappa$ be a regular cardinal with $\PPP\in\HH{\theta}$. By Lemma \ref{lemma:StationaryLayeredElementarySubmodel}, there is a stationary subset $S$ of $\PP_\kappa(\HH{\theta})$ consisting of elementary substructures $M$ of $\HH{\theta}$ with $\vec{p}\in M$, $\kappa\cap M\in \kappa$ and $\PPP\cap M\in\mathrm{Reg}_\kappa(\PPP)$. Given $M\in S$, there is a reduct $r(M)$ of $p_{\kappa\cap M}$ into $\PPP\cap M$. This defines a regressive function $\map{r}{S}{\HH{\theta}}$ and we can find $q\in\PPP$ and $S^\prime\subseteq S$ stationary in $\PP_\kappa{\HH{\theta}}$ such that $r(M)=q\in M$ for all $M\in S^\prime$. Pick $M,N\in S^\prime$ with $\kappa\cap M<\kappa\cap N$. Then the conditions $p_{\kappa\cap M}$ and $q$ are compatible in $\PPP$ and, since $p_{\kappa\cap M},q\in N$, there is $q^\prime\in\PPP\cap N$ extending $p_{\kappa\cap M}$ and $q$. Then the conditions $p_{\kappa\cap N}$ and $q^\prime$ are compatible in $\PPP$ and hence the conditions $p_{\kappa\cap M}$ and $p_{\kappa\cap N}$ are compatible in $\PPP$. This shows that the sequence $\seq{p_{\kappa\cap M}}{M\in S^\prime}$ consists of $\kappa$-many pairwise compatible conditions. 
\end{proof}


\section{$\calF$-layered partial orders}\label{section:FLayeredPoset}

Throughout this section, $\kappa$ denotes an uncountable regular cardinal, $\lambda$ denotes a cardinal greater than or equal to $\kappa$ and $\calF$ denotes a normal filter on $\PP_\kappa(\lambda)$. Given a cardinal $\nu<\kappa$ and a function $\map{\varphi}{{}^\nu\lambda}{\lambda}$, we define $$\mathrm{Cl}(\varphi) ~ = ~\Set{a\in\PP_\kappa(\lambda)}{\varphi[{}^\nu a]\subseteq a}.$$

In the following, we derive basic structural properties of classes of $\calF$-layered partial orders. These results show that, for many choices of $\calF$, the class of completely $\calF$-layered partial orders satisfies the properties listed in Section \ref{sec_Intro}. We start by presenting the main examples of such filters.

\begin{examples}\label{example:Relevant Filters}
 \begin{enumerate}[(1)]
  \item We let $\CF{\kappa,\lambda}{}$ denote the \emph{club filter} on $\PP_\kappa(\lambda)$. Then $\CF{\kappa,\lambda}{}$ is normal (and therefore  ${<}\kappa$-closed) and it contains all sets of the form $\mathrm{Cl}(\varphi)$ with $\map{\varphi}{{}^n\lambda}{\lambda}$ for some $n<\omega$.

  \item  Let $\nu<\kappa$ be an infinite cardinal with $\lambda=\lambda^\nu$ and $\mu^\nu<\kappa$ for all $\mu<\kappa$. We define the \emph{$\nu$-club filter $\CF{\kappa,\lambda}{\nu}$ on $\PP_\kappa(\lambda)$} to be the collection of all subsets of $\PP_\kappa(\lambda)$ that contain a subset of the form $\mathrm{Cl}(\varphi)$ with $\map{\varphi}{{}^\nu\lambda}{\lambda}$.

Fix a bijection $\map{\sigma}{{}^\nu\lambda}{\lambda}$. Then $\mathrm{Cl}(\sigma)\cap\mathrm{Cl}(\varphi\circ\sigma^{{-}1})\subseteq\mathrm{Cl}(\varphi)$ holds for every   $\map{\varphi}{{}^\nu\lambda}{\lambda}$. Since sets of the form $\mathrm{Cl}(\varphi\circ\sigma^{{-}1})$ are club in $\PP_\kappa(\lambda)$, this shows that $\CF{\kappa,\lambda}{\nu}$ is equal to the filter induced by the restriction of $\CF{\kappa,\lambda}{}$ to the stationary set $\mathrm{Cl}(\sigma)$ and therefore $\CF{\kappa,\lambda}{\nu}$ is a normal filter on $\PP_\kappa(\lambda)$.

  \item\label{example:item:WCFilter} Assume that $\kappa$ is weakly compact. By the results of \cite{MR0281606}, there is a normal filter on $\kappa$ that is generated by sets of the form $$R_{\Phi,A,a} ~ = ~ \Set{\alpha<\kappa}{\VV_\alpha\models\Phi(A\cap\VV_\alpha,a)},$$ where $\Phi$ is a $\Pi^1_1$-formula, $A\subseteq\VV_\kappa$, $a\in\VV_\kappa$ and $\VV_\kappa\models\Phi(A,a)$. The \emph{weakly compact filter $\calF_{wc(\kappa)}$ on $\PP_\kappa(\kappa)$} is the filter generated by sets of the above form viewed as elements of $\PP_\kappa(\kappa)$. This filter is again normal and contains the collection of all inaccessible cardinals less than $\kappa$ as an element. Finally, for every function $\map{\varphi}{{}^\nu\kappa}{\kappa}$ with $\nu<\kappa$, we can find $\langle \Phi,A,a\rangle$ as above such that $\varphi[{}^\nu\mu]\subseteq\mu$ for every $\mu\in R_{\Phi,A,a}$. This shows that $\calF_{wc(\kappa)}$ extend the $\nu$-club filter on $\PP_\kappa(\kappa)$ for every $\nu<\kappa$.

  \item Assume that $\kappa$ is $\lambda$-supercompact. Then there is a normal fine ultrafilter $\calU$ on $\PP_\kappa(\lambda)$. Let $\map{j_\calU}{\VV}{N}$ be the corresponding ultrapower embedding. Then $\calU=\Set{S \subseteq \PP_\kappa(\lambda)}{j_\calU[\lambda]\in j_\calU(S)}$. We show that $\calU$ extends the $\nu$-club filter on $\PP_\kappa(\lambda)$ for every $\nu<\kappa$: Fix $\map{\varphi}{{}^\nu\lambda}{\lambda}$ with $\nu<\kappa$. If $\map{f}{\nu}{j_\calU[\lambda]}$, then there is $\map{f_0}{\nu}{\lambda}$ with $f=j_\calU\circ f_0$ and this implies that $$j_\calU(\varphi)(f) ~ = ~ j_\calU(\varphi)(j_\calU(f_0)) ~ = ~ j_\calU(\varphi(f_0)) ~ \in ~ j_\calU[\lambda].$$ We can conclude that $j_\calU[\lambda]\in j_\calU(\mathrm{Cl}(\varphi))$ and this shows that $\mathrm{Cl}(\varphi)\in\calU$.  
 \end{enumerate} 
\end{examples}

The proof of Theorem \ref{theorem:KunenModelNonWeaklyCompactKnasterLayered} in Section \ref{section:FilterLargeCardinals} contains another example of a normal filter on $\PP_\kappa(\kappa)$ with the property that the resulting class of completely layered partial orders satisfies the requirements listed in Section \ref{sec_Intro}. In the following, we prove the statements about $\calF$-layered partial orders mentioned in Section \ref{sec_Intro}.

\begin{proposition}\label{proposition;CharacterizeFLayered}
  The following statements are equivalent for every partial order of cardinality at most $\lambda$: 
 \begin{enumerate}
  \item $\PPP$ is $\calF$-layered.

  \item There is a surjection $\map{s}{\lambda}{\PPP}$ with $\Set{a\in\PP_\kappa(\lambda)}{s[a]\in\mathrm{Reg}_\kappa(\PPP)}\in\calF$.
 \end{enumerate}
\end{proposition}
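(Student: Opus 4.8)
The plan is to dispatch the forward implication immediately and to concentrate on the converse, where the normality of $\calF$ does all the work. For (i) $\Rightarrow$ (ii), observe that $\PPP$ is nonempty (it has a maximum element) and of cardinality at most $\lambda$, so there is at least one surjection $\map{s}{\lambda}{\PPP}$; by the definition of $\calF$-layeredness the set $\Set{a\in\PP_\kappa(\lambda)}{s[a]\in\mathrm{Reg}_\kappa(\PPP)}$ lies in $\calF$ for \emph{every} such $s$, so any single one witnesses (ii).

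For the substantive direction (ii) $\Rightarrow$ (i), fix a surjection $\map{s}{\lambda}{\PPP}$ witnessing (ii), so that $A=\Set{a\in\PP_\kappa(\lambda)}{s[a]\in\mathrm{Reg}_\kappa(\PPP)}\in\calF$, and let $\map{t}{\lambda}{\PPP}$ be an arbitrary surjection; the goal is to show $\Set{a\in\PP_\kappa(\lambda)}{t[a]\in\mathrm{Reg}_\kappa(\PPP)}\in\calF$. The key idea is to translate between the two enumerations using choice functions. First I would use surjectivity of $s$ to pick $\map{f}{\lambda}{\lambda}$ with $s(f(\xi))=t(\xi)$ for all $\xi<\lambda$, and surjectivity of $t$ to pick $\map{g}{\lambda}{\lambda}$ with $t(g(\xi))=s(\xi)$ for all $\xi<\lambda$.

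Next I would invoke that, being normal, $\calF$ extends the club filter and hence contains $C=\mathrm{Cl}(f)\cap\mathrm{Cl}(g)$, where $f$ and $g$ are viewed as unary functions on $\lambda$. For $a\in\mathrm{Cl}(f)$ we have $f[a]\subseteq a$, whence $t[a]=s[f[a]]\subseteq s[a]$; symmetrically, for $a\in\mathrm{Cl}(g)$ we obtain $s[a]\subseteq t[a]$. Thus $s[a]=t[a]$ for every $a\in C$. Since $\calF$ is a filter, $A\cap C\in\calF$, and for every $a\in A\cap C$ we have $t[a]=s[a]\in\mathrm{Reg}_\kappa(\PPP)$; hence $A\cap C$ is contained in $\Set{a\in\PP_\kappa(\lambda)}{t[a]\in\mathrm{Reg}_\kappa(\PPP)}$, and upward closure of $\calF$ completes the argument.

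The only point requiring genuine care — and the main obstacle, though it is mild — is justifying that sets of the form $\mathrm{Cl}(f)$ belong to $\calF$; this is exactly the content of the first item of Examples \ref{example:Relevant Filters} together with the standing assumption that $\calF$ is normal and therefore contains the club filter. Everything else is routine manipulation of the choice functions $f$ and $g$, which I would not spell out in full.
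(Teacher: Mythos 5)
Your proof is correct and is essentially the paper's argument: the paper's one-line proof observes that for any two surjections $s_0,s_1$ onto $\PPP$ the set $\Set{a\in\PP_\kappa(\lambda)}{s_0[a]=s_1[a]}$ is club and hence in $\calF$ by normality, and your choice functions $f,g$ with the closure sets $\mathrm{Cl}(f)\cap\mathrm{Cl}(g)$ are exactly the standard verification of that club fact, spelled out. The only difference is the level of detail (the paper leaves the forward direction and the closure argument implicit), so there is nothing to amend.
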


\begin{proof}
 Given two surjections $\map{s_0,s_1}{\lambda}{\PPP}$, the set $\Set{a\in\PP_\kappa(\lambda)}{s_0[a]=s_1[a]}$ is an element of $\CF{\kappa,\lambda}{}$ and, since $\calF$ is normal, it is contained in $\calF$.  
\end{proof}

\begin{lemma}\label{proposition:FlayeredStationaryLayered}
  Every $\calF$-layered partial order is $\kappa$-stationarily layered.  
\end{lemma}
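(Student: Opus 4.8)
The plan is to prove Lemma \ref{proposition:FlayeredStationaryLayered} by reducing stationary layeredness to a statement about the normal filter $\calF$, and then exploiting the fact that any set in $\calF$ is in particular stationary in $\PP_\kappa(\lambda)$. First I would fix an $\calF$-layered partial order $\PPP$, so $\betrag{\PPP}\leq\lambda$ by definition, and choose an arbitrary surjection $\map{s}{\lambda}{\PPP}$. By the definition of $\calF$-layeredness, the set $$A ~ = ~ \Set{a\in\PP_\kappa(\lambda)}{s[a]\in\mathrm{Reg}_\kappa(\PPP)}$$ belongs to $\calF$. Since $\calF$ is a (normal, hence proper) filter on $\PP_\kappa(\lambda)$, every element of $\calF$ is stationary in $\PP_\kappa(\lambda)$; in particular $A$ is stationary.

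The heart of the argument is transporting this stationarity from $\PP_\kappa(\lambda)$ across the surjection $s$ to obtain stationarity in $\PP_\kappa(\PPP)$. The map $s$ induces a map $a\mapsto s[a]$ from $\PP_\kappa(\lambda)$ to $\PP_\kappa(\PPP)$, and the image of the stationary set $A$ under this map lands inside $\mathrm{Reg}_\kappa(\PPP)$ by the very definition of $A$. The key technical point is that projecting a stationary subset of $\PP_\kappa(\lambda)$ along a surjection $\lambda\to\PPP$ yields a stationary subset of $\PP_\kappa(\PPP)$: given any continuous cofinal $F\subseteq\PP_\kappa(\PPP)$, I would pull it back along $s$ to a continuous cofinal subset of $\PP_\kappa(\lambda)$ (intersecting with the club of $a$ closed under $s$ and under a choice of preimages for $s$), meet it with $A$, and push the witness forward via $s[\cdot]$. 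This is precisely the standard projection machinery for Jech stationarity in $\PP_\kappa(A)$ recorded in \cite{MR0357121}, which the paper already invokes in the proof of Lemma \ref{lemma:StationaryLayeredElementarySubmodel}; I would cite it rather than redo the bookkeeping.

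The main obstacle I anticipate is the routine but slightly delicate verification that this projection is legitimate, namely ensuring that the set $\Set{s[a]}{a\in A}$ meets every continuous cofinal family in $\PP_\kappa(\PPP)$ and not merely that it is cofinal. The subtlety is handling the preimages: to pull back a continuous cofinal $F\subseteq\PP_\kappa(\PPP)$ I must augment $F$ with a function selecting, for each $p\in\PPP$, some $\gamma<\lambda$ with $s(\gamma)=p$, so that sets $a\in\PP_\kappa(\lambda)$ closed under this selector satisfy $s[a]=s[a]$ consistently and $s$ restricted to such $a$ behaves well under $\subseteq$-increasing unions. Once this continuity-and-cofinality transfer is in place, stationarity of $A$ forces a witness $a\in A$ with $s[a]$ in the given family, and since $s[a]\in\mathrm{Reg}_\kappa(\PPP)$ this shows $\mathrm{Reg}_\kappa(\PPP)$ meets every continuous cofinal subset of $\PP_\kappa(\PPP)$, i.e.\ $\PPP$ is $\kappa$-stationarily layered. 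Everything else is bookkeeping with the normality and properness of $\calF$.
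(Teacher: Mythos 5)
Your proof is correct, and it transfers stationarity across the surjection in a genuinely different way than the paper does. Both arguments start the same: fix a surjection $\map{s}{\lambda}{\PPP}$, observe that $A=\Set{a\in\PP_\kappa(\lambda)}{s[a]\in\mathrm{Reg}_\kappa(\PPP)}$ belongs to $\calF$, and use the fact that every element of a normal filter is stationary in $\PP_\kappa(\lambda)$. The paper then \emph{lifts} $A$ upward: it gets stationarily many elementary submodels $M$ of $\HH{\theta}$ with $s\in M$ and $s[\lambda\cap M]\in\mathrm{Reg}_\kappa(\PPP)$, notes that elementarity gives $\PPP\cap M=s[\lambda\cap M]$, and then invokes Lemma \ref{lemma:StationaryLayeredElementarySubmodel} to come back down to $\PP_\kappa(\PPP)$. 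You instead \emph{project} $A$ directly along $a\mapsto s[a]$ and check Jech stationarity of the image by hand: fixing a selector $\map{t}{\PPP}{\lambda}$ with $s\circ t=\id_\PPP$, the set of $a\in\PP_\kappa(\lambda)$ with $t[s[a]]\subseteq a$ and $s[a]\in F$ is closed and cofinal in $\PP_\kappa(\lambda)$ for any continuous cofinal $F\subseteq\PP_\kappa(\PPP)$ (cofinality via the usual back-and-forth $\omega$-chain between $F$ and the club of $a$ closed under $t\circ s$), so it meets $A$, and any witness $a$ yields $s[a]\in F\cap\mathrm{Reg}_\kappa(\PPP)$. Your route is more self-contained: it needs neither $\HH{\theta}$, nor elementary submodels, nor Lemma \ref{lemma:StationaryLayeredElementarySubmodel}, only the projection machinery of \cite{MR0357121}, at the cost of doing the selector bookkeeping explicitly; the paper's route is shorter on the page because the lifting argument and Lemma \ref{lemma:StationaryLayeredElementarySubmodel} are already set up and reused elsewhere. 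One small caveat: your parenthetical ``normal, hence proper'' is not by itself the reason elements of $\calF$ are stationary; what is really used (and what the paper also asserts without proof) is that normality together with fineness puts every set of the form $\mathrm{Cl}(\varphi)$, and hence the club filter, inside $\calF$, after which properness guarantees that each $A\in\calF$ meets each club.
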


\begin{proof}
 Let $\map{s}{\lambda}{\PPP}$ be a surjection and $\theta>\lambda$ be a sufficiently large regular cardinal. Since the normality of $\calF$ implies  that every element of $\calF$ is stationary in $\PP_\kappa(\lambda)$, we can use a standard lifting argument to show that the collection $S$ of all elementary submodels $M$ of $\HH{\theta}$ in $\PP_\kappa(\HH{\theta})$ such that $s\in M$, $\kappa\cap M\in\kappa$ and $s[\lambda\cap M]\in\mathrm{Reg}_\kappa(\PPP)$ is stationary in $\PP_\kappa(\lambda)$. Elementarity implies that $\PPP\cap M=s[\lambda\cap M]$ holds for all $M\in S$ and, by Lemma  \ref{lemma:StationaryLayeredElementarySubmodel}, we can conclude that $\PPP$ is $\kappa$-stationarily layered.   
\end{proof}

\begin{lemma}\label{lemma:FLayeredProducts}
 Let $\nu<\kappa$ be a cardinal such that $\lambda=\lambda^\nu$, 
$\mu^\nu<\kappa$ for every $\mu<\kappa$ and $\mathrm{Cl}(\varphi)\in\calF$ for every function $\map{\varphi}{{}^\nu\lambda}{\lambda}$. Then the class of $\calF$-layered partial orders is closed under $\nu$-products of length $\lambda$. 
\end{lemma}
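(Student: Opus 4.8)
The plan is to show that if $\seq{\PPP_\gamma}{\gamma<\lambda}$ is a sequence of $\calF$-layered partial orders, then the $\nu$-support product $\vec{\PPP}=\prod_{\gamma<\lambda}\PPP_\gamma$ is again $\calF$-layered. First I would observe that since each $\PPP_\gamma$ has cardinality at most $\lambda$ and the support of every condition has size at most $\nu$, the hypothesis $\lambda=\lambda^\nu$ guarantees that $\vec{\PPP}$ itself has cardinality at most $\lambda$, so it makes sense to ask whether $\vec{\PPP}$ is $\calF$-layered. By Proposition \ref{proposition;CharacterizeFLayered}, it suffices to produce a single surjection $\map{s}{\lambda}{\vec{\PPP}}$ witnessing $\calF$-layeredness.

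The key step is to set up the bookkeeping so that closure under a single function $\map{\varphi}{{}^\nu\lambda}{\lambda}$ captures everything at once. For each $\gamma<\lambda$, fix a surjection $\map{s_\gamma}{\lambda}{\PPP_\gamma}$ witnessing that $\PPP_\gamma$ is $\calF$-layered, so that $\Set{a\in\PP_\kappa(\lambda)}{s_\gamma[a]\in\mathrm{Reg}_\kappa(\PPP_\gamma)}\in\calF$. Using $\lambda=\lambda^\nu$ I would fix bijections coding a $\nu$-sized support set together with a choice of index into each coordinate's surjection, thereby defining a surjection $\map{s}{\lambda}{\vec{\PPP}}$ whose value $s(\xi)$ is a condition supported on a prescribed set of at most $\nu$ coordinates with $\gamma$-th entry of the form $s_\gamma(\eta)$. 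Then I would design $\map{\varphi}{{}^\nu\lambda}{\lambda}$ so that, for any $a\in\mathrm{Cl}(\varphi)$ closed under $\varphi$, the image $s[a]$ has two features: the set of coordinates appearing in supports of elements of $s[a]$ is captured by $a$ (so that $s[a]$ is literally a $\nu$-support product of the partial orders $s_\gamma[a\cap\text{(relevant slice)}]$), and each coordinate slice $a_\gamma=\Set{\eta}{s_\gamma(\eta)\in s[a]\text{ at coordinate }\gamma}$ lies in the layering-good set for $\PPP_\gamma$. The point is that $\varphi$ can fold in the finitely-many (indeed $\nu$-many) functions needed to witness closure under each $s_\gamma$ simultaneously, since $\mu^\nu<\kappa$ keeps $a$ small and $\lambda=\lambda^\nu$ lets all these demands be encoded into a single ${}^\nu\lambda\to\lambda$ function.

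Once the combinatorial setup is in place, the analytic heart is the claim that whenever each slice $s_\gamma[a_\gamma]$ is a regular suborder of $\PPP_\gamma$ (on a $\subseteq$-closed support index set carried by $a$), the product $s[a]$ is a regular suborder of $\vec{\PPP}$. This is a standard fact about products: reducts can be computed coordinatewise, and the closure under $\varphi$ ensures that the $e$-reduct of a full product condition $\vec{q}$ into $s[a]$ is obtained by reducing each coordinate $\vec{q}(\gamma)$ into $s_\gamma[a_\gamma]$ and assembling these on the (at most $\nu$-sized, $a$-captured) support. I would verify regularity using the reduct characterization from the proposition following the definition of regular embedding. Finally, since $\calF$ is ${<}\kappa$-closed (being normal) and $\mathrm{Cl}(\varphi)\in\calF$ by hypothesis, together with the $\calF$-large sets witnessing layeredness of the coordinates, the set of $a$ for which $s[a]\in\mathrm{Reg}_\kappa(\vec{\PPP})$ contains a set in $\calF$, giving $\calF$-layeredness of $\vec{\PPP}$.

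I expect the main obstacle to be the coding step: arranging $\map{s}{\lambda}{\vec{\PPP}}$ and $\map{\varphi}{{}^\nu\lambda}{\lambda}$ so that a single closure condition $a\in\mathrm{Cl}(\varphi)$ simultaneously forces (a) $s[a]$ to decompose as an honest $\nu$-support product over a support-index set carried by $a$, and (b) each coordinate slice to be layering-good. Getting the supports to cohere — in particular ensuring that the coordinatewise reducts reassemble into a genuine condition of $\vec{\PPP}$ with support of size at most $\nu$ and lying in $s[a]$ — is where the hypotheses $\lambda=\lambda^\nu$ and $\mu^\nu<\kappa$ must be deployed carefully, and where a naive product argument could fail if the support bookkeeping is not closed under $\varphi$.
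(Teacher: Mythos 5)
Your overall architecture---building the surjection onto the product from the coordinate surjections, using closure under functions for the support and assembly bookkeeping, and computing reducts coordinatewise---is sound and is essentially the skeleton of the paper's proof. The genuine gap is in the mechanism you propose for guaranteeing that the coordinate slices are layering-good. You want closure of $a$ under a single function $\map{\varphi}{{}^\nu\lambda}{\lambda}$ to imply that each slice lies in the layering-good set $F_\gamma=\Set{b\in\PP_\kappa(\lambda)}{s_\gamma[b]\in\mathrm{Reg}_\kappa(\PPP_\gamma)}$, and you justify this by saying $\varphi$ can ``fold in the functions needed to witness closure under each $s_\gamma$.'' This presumes that each $F_\gamma$ contains a set of the form $\mathrm{Cl}(h_\gamma)$. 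That is false for a general normal filter $\calF$: the hypothesis of the lemma says only that $\calF$ \emph{contains} all closure sets, not that every member of $\calF$ absorbs one. Precisely in the intended applications---$\calF$ the weakly compact filter $\calF_{wc(\kappa)}$, or a normal fine ultrafilter---the sets $F_\gamma$ are merely filter sets, and membership of $a$ in them cannot be forced by closing $a$ under any functions whatsoever.

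Your fallback, that $\calF$ is ``${<}\kappa$-closed (being normal),'' does not repair this: there are $\lambda\geq\kappa$ many coordinates, hence $\lambda$ many sets $F_\gamma$ to combine, and ${<}\kappa$-closure handles only fewer than $\kappa$ of them. The missing idea is the diagonal intersection, which is the full strength of normality: $\Delta_{\gamma<\lambda}F_\gamma=\Set{a\in\PP_\kappa(\lambda)}{\gamma\in a\Rightarrow a\in F_\gamma}\in\calF$. This weaker, ``tagged'' form of largeness is exactly what your argument needs, because your support-capture property already traps all relevant coordinates inside $a$: every $\gamma$ occurring in the support of an element of $s[a]$ belongs to $a$, so for $a\in\Delta_{\gamma<\lambda}F_\gamma$ every slice that actually matters, $s_\gamma[a]$ with $\gamma\in a$, is a regular suborder of $\PPP_\gamma$. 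This is how the paper proceeds: it intersects the closure sets handling supports and assembly with $\Delta_{\delta<\lambda}F_\delta$ and a club, and then runs the coordinatewise-reduct verification you describe. With that replacement your proof goes through; as written, the largeness step fails.
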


\begin{proof}
 Let $\seq{\PPP_\delta}{\delta<\lambda}$ be a sequence of $\calF$-layered partial orders and let $s$ be a surjection of $\lambda$ onto the corresponding $\nu$-support product ${\vec{\PPP}=\prod_{\delta<\lambda}\PPP_\delta}$. Fix a sequence $\seq{d_\gamma\in{}^\nu\lambda}{\gamma<\lambda}$ of injections such that $\supp{s(\gamma)}\subseteq\ran{d_\gamma}$ for every $\gamma<\lambda$. Given $\delta<\lambda$, let $\map{s_\delta}{\lambda}{\PPP_\delta}$ denote the induced surjection defined by  $s_\delta(\gamma)=s(\gamma)(\delta)$ for all $\gamma<\lambda$. In this situation, our assumptions ensures that the set $F_\delta=\Set{a\in\PP_\kappa(\lambda)}{s_\delta[a]\in\mathrm{Reg}_\kappa(\PPP)}$ is an element of $\calF$ for every $\delta<\lambda$.

Pick  $\map{\varphi}{\lambda\times\lambda}{\lambda}$ with $\varphi(\gamma,\xi)=d_\gamma(\xi)$ for all $\gamma<\lambda$ and $\xi<\nu$. In addition, pick $\map{\psi}{{}^\nu\lambda\times{}^\nu\lambda}{\lambda}$ such that for every map  $\map{f}{\nu}{\lambda}$ and every injection $\map{g}{\nu}{\lambda}$, we have $\supp{s(\psi(f,g))}\subseteq\ran{g}$ and $s(\psi(f,g))(g(\xi))=s_{g(\xi)}(f(\xi))$ for all $\xi<\nu$.  Let $C$ denote the club in $\PP_\kappa(\lambda)$ consisting of all $a\in\PP_\kappa(\lambda)$ with $\nu\subseteq a$ and define $$F ~ = ~ C ~ \cap ~ \mathrm{Cl}(\varphi) ~ \cap ~ \mathrm{Cl}(\psi) ~ \cap ~ \Delta_{\delta<\lambda}F_\delta ~ \in ~ \calF.$$

Pick $a\in F$ and $\gamma_0,\gamma_1\in a$ such that the corresponding conditions $\vec{q}_0=s(\gamma_0)$ and $\vec{q}_1=s(\gamma_1)$ are compatible in $\vec{\PPP}$. Since $\nu\subseteq a$ and $\varphi[a\times\nu]\subseteq a$, we have $\supp{\vec{q}_i}\subseteq a$ for all $i<2$ and there is an injection $\map{g}{\nu}{a}$ with $\supp{\vec{q}_0}\cup\supp{\vec{q}_1}\subseteq\ran{g}$. Given $\xi<\nu$, we have $g(\xi)\in a\in F_{g(\xi)}$ and hence $s_{g(\xi)}[a]\in\mathrm{Reg}_\kappa(\PPP_{g(\xi)})$. This shows that there is a function $\map{f}{\nu}{a}$ such that $s_{g(\xi)}(f(\xi))\leq_{\PPP_{g(\xi)}}\vec{q}_i(g(\xi))$ for all $\xi<\nu$ and $i<2$. Set $\vec{q}=s(\varphi(f,g))\in s[a]\subseteq\vec{\PPP}$. By the above choices, $\vec{q}$ is a common extension of $\vec{q}_0$ and $\vec{q}_1$ in $s[a]$. This shows that $s[a]$ is a suborder of $\vec{\PPP}$.

 Next, pick $a\in F$ and a condition $\vec{p}$ in $\vec{\PPP}$. Then there is an injection $\map{g}{\nu}{a}$ with $a\cap\supp{\vec{p}\hspace{0.9pt}}\subseteq\ran{g}$. As above, we have $g(\xi)\in a\in F_{g(\xi)}$ and $s_{g(\xi)}[a]\in\mathrm{Reg}_\kappa(\PPP_{g(\xi)})$ for every $\xi<\nu$. This shows that there is a function $\map{f}{\nu}{a}$ such that $s_{g(\xi)}(f(\xi))$ is a reduct of $\vec{p}(g(\xi))$ into $s_{g(\xi)}[a]$ for every $\xi<\nu$. Define $\vec{q}=s(\varphi(f,g))\in s[a]\subseteq\vec{\PPP}$ and let $\vec{r}$ be a condition in $s[a]$ below $\vec{q}$. As above, $\nu\subseteq a$ and $\varphi[a\times\nu]\subseteq a$ imply that $\supp{\vec{r}\hspace{0.9pt}}\subseteq a$. Given $\xi<\nu$, we have $$\vec{r}(g(\xi)) ~\leq_{\PPP_{g(\xi)}} ~ \vec{q}(g(\xi)) ~ = ~ s(\varphi(f,g))(g(\xi)) ~ = ~ s_{g(\xi)}(f(\xi))$$ and this implies that $\vec{r}(g(\xi))$ is compatible with $\vec{p}(g(\xi))$ in $\PPP_{g(\xi)}$. Since we ensured that  $\supp{\vec{p}\hspace{0.9pt}}\cap\supp{\vec{r}\hspace{0.9pt}}\subseteq\ran{g}$, we can conclude that $\vec{r}$ is compatible with $\vec{p}$ in $\vec{\PPP}$.

The above computations show  that  $s[a]$ is a regular suborder of $\vec{\PPP}$ for every $a\in F$. Since $F\in\calF$, this yields the statement of the lemma.  
\end{proof}

\begin{corollary}
 \begin{enumerate}
  \item The class of $\calF$-layered partial orders is closed under finite support products of length $\lambda$.

  \item Let $\nu<\kappa$ be an infinite cardinal with $\lambda=\lambda^\nu$ and $\mu^\nu<\kappa$ for all $\mu<\kappa$. If $\calF$ extends the $\nu$-club filter on $\PP_\kappa(\lambda)$, then the class of $\calF$-layered partial orders is closed under $\nu$-support products of length $\lambda$.

  \item If $\kappa$ is a weakly compact cardinal, then the class of $\calF_{wc(\kappa)}$-layered partial orders is closed under $\nu$-products of length $\kappa$ for every $\nu<\kappa$. 

  \item If $\calU$ is a normal fine ultrafilter on $\PP_\kappa(\lambda)$ and $\lambda=\lambda^{{<}\kappa}$, then the class of $\calU$-layered partial orders is closed under $\nu$-support products of length $\lambda$ for every $\nu<\kappa$.  
\qed
 \end{enumerate}
\end{corollary}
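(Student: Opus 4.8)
The plan is to derive all four items from Lemma \ref{lemma:FLayeredProducts}, so that essentially the only work is to verify its three hypotheses---$\lambda=\lambda^\nu$, $\mu^\nu<\kappa$ for all $\mu<\kappa$, and $\mathrm{Cl}(\varphi)\in\calF$ for every $\map{\varphi}{{}^\nu\lambda}{\lambda}$---in each of the four situations, together with one small adaptation of the lemma's proof for the finite-support statement in (1).

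For item (2) I would feed the assumptions directly into Lemma \ref{lemma:FLayeredProducts}: the arithmetic conditions $\lambda=\lambda^\nu$ and $\mu^\nu<\kappa$ are assumed outright, and since Example \ref{example:Relevant Filters}.(2) describes $\CF{\kappa,\lambda}{\nu}$ as the collection of all sets containing some $\mathrm{Cl}(\varphi)$ with $\map{\varphi}{{}^\nu\lambda}{\lambda}$, the hypothesis $\calF\supseteq\CF{\kappa,\lambda}{\nu}$ yields $\mathrm{Cl}(\varphi)\in\calF$ for every such $\varphi$. Item (3) is the instance $\lambda=\kappa$: weak compactness makes $\kappa$ strongly inaccessible, so $\kappa^\nu=\kappa$ and $\mu^\nu<\kappa$ for all $\mu,\nu<\kappa$, while Example \ref{example:Relevant Filters}.(\ref{example:item:WCFilter}) records that $\calF_{wc(\kappa)}$ extends the $\nu$-club filter on $\PP_\kappa(\kappa)$ for every $\nu<\kappa$; hence the required sets $\mathrm{Cl}(\varphi)$ lie in $\calF_{wc(\kappa)}$ and the lemma applies. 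Item (4) is the analogous instance for the supercompactness ultrafilter: from $\lambda=\lambda^{{<}\kappa}$ one gets $\lambda^\nu=\lambda$ for every $\nu<\kappa$, the existence of a normal fine ultrafilter on $\PP_\kappa(\lambda)$ forces $\kappa$ to be inaccessible so that $\mu^\nu<\kappa$, and Example \ref{example:Relevant Filters}.(4) shows $\mathrm{Cl}(\varphi)\in\calU$ for every $\map{\varphi}{{}^\nu\lambda}{\lambda}$, so Lemma \ref{lemma:FLayeredProducts} applies once more with $\calF=\calU$.

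Item (1) is the only one that is not literally an instance of the lemma, since a finite-support product is the union of the $n$-support products over all $n<\omega$ rather than a single $\nu$-support product. Here I would re-run the proof of Lemma \ref{lemma:FLayeredProducts} with ``$\nu$'' replaced by ``finite'': at each step of that argument one handles only one or two conditions at a time, each of finite support, so the auxiliary injections $g$ may be taken with finite domain and the closure functions $\varphi,\psi$ become finitary maps on ${}^{{<}\omega}\lambda$. The needed hypotheses then hold for free: $\lambda^n=\lambda$ and $\mu^n<\kappa$ for finite $n$ since $\kappa$ is uncountable and regular, and, as $\calF$ is normal it contains the club filter $\CF{\kappa,\lambda}{}$, which by Example \ref{example:Relevant Filters}.(1) contains every $\mathrm{Cl}(\varphi)$ with $\map{\varphi}{{}^n\lambda}{\lambda}$; a single finitary closure function contributes only a countable intersection of such clubs, which still lies in the ${<}\kappa$-closed filter $\calF$.

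The main obstacle, modest as it is, sits entirely in item (1): one must check that the injection $g$ covering $\supp{\vec{q}_0}\cup\supp{\vec{q}_1}$ (respectively $a\cap\supp{\vec{p}\hspace{0.9pt}}$) in the lemma's proof can still be chosen with finite domain, and that the maps $\varphi,\psi$ witnessing that $s[a]$ is a regular suborder stay finitary; once this bookkeeping is arranged, the diagonal-intersection step and the verification that $s[a]\in\mathrm{Reg}_\kappa(\vec{\PPP})$ go through unchanged. Items (2)--(4) present no genuine difficulty beyond matching the cardinal-arithmetic and filter-containment hypotheses of Lemma \ref{lemma:FLayeredProducts} to the conclusions of Examples \ref{example:Relevant Filters}.(2)--(4).
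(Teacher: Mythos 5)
Your proposal is correct and follows essentially the same route the paper intends: the corollary is stated there with no written proof, all four items being read off from Lemma \ref{lemma:FLayeredProducts} together with Example \ref{example:Relevant Filters}, which is exactly your derivation of items (2)--(4). Your handling of item (1) -- rerunning the lemma's proof with finite injections $g$ and finitary closure maps $\varphi,\psi$, whose closure sets lie in the club filter and hence in the normal (${<}\kappa$-closed) filter $\calF$ -- is precisely the adaptation the paper's \textit{qed} tacitly relies on, since a finite support product is not literally a $\nu$-support product for any single $\nu$ and item (1) carries no cardinal-arithmetic hypotheses that would permit taking $\nu=\omega$.
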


In the second part of this section, we prove the analogs of the above results for completely $\calF$-layered partial orders. We start by showing that the two concepts coincide on the class of partial orders of size at most $\lambda$.

\begin{lemma}
 Every $\calF$-layered partial order is completely $\calF$-layered. 
\end{lemma}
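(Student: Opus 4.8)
The two requirements in the definition of complete $\calF$-layeredness will be handled separately. The first requirement---that every subset of $\PPP$ of size at most $\lambda$ sits inside a regular suborder of size at most $\lambda$---is immediate: since $\calF$-layeredness forces $\betrag{\PPP}\leq\lambda$, the poset $\PPP$ is itself a regular suborder of $\PPP$ of size at most $\lambda$ containing every such subset. So the whole content lies in the second requirement, namely that \emph{every regular suborder $\QQQ$ of $\PPP$ is again $\calF$-layered} (note that $\betrag{\QQQ}\leq\betrag{\PPP}\leq\lambda$ holds automatically).

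Fix a regular suborder $\QQQ$ of $\PPP$ and a surjection $\map{s}{\lambda}{\PPP}$. By Proposition \ref{proposition;CharacterizeFLayered}, it suffices to produce a \emph{single} surjection $\map{t}{\lambda}{\QQQ}$ for which $\Set{a\in\PP_\kappa(\lambda)}{t[a]\in\mathrm{Reg}_\kappa(\QQQ)}\in\calF$. Fixing $q_0\in\QQQ$ and $\gamma_0<\lambda$ with $s(\gamma_0)=q_0$, I define $t(\gamma)=s(\gamma)$ whenever $s(\gamma)\in\QQQ$ and $t(\gamma)=q_0$ otherwise; since $s$ maps onto $\PPP\supseteq\QQQ$, the map $t$ is a surjection onto $\QQQ$, and $t[a]=s[a]\cap\QQQ$ for every $a$ with $\gamma_0\in a$.

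The tempting claim that $s[a]\cap\QQQ$ is a regular suborder of $\QQQ$ whenever $s[a]$ is a regular suborder of $\PPP$ is \emph{false} in general, since intersections of regular suborders need not even be suborders. The remedy, and the heart of the argument, is to close $a$ under finitely many Skolem-type functions so that the images $s[a]$ and $t[a]=s[a]\cap\QQQ$ behave like intersections with an elementary submodel. Concretely, I fix functions $\map{f_\PPP}{{}^2\lambda}{\lambda}$ and $\map{f_\QQQ}{{}^2\lambda}{\lambda}$ choosing, for compatible pairs, an index of a common lower bound computed in $\PPP$ and in $\QQQ$ respectively, together with a function $\map{f_{\mathrm{red}}}{\lambda}{\lambda}$ choosing for each $\gamma$ an index $f_{\mathrm{red}}(\gamma)$ with $s(f_{\mathrm{red}}(\gamma))\in\QQQ$ a reduct of $s(\gamma)$ into $\QQQ$ (such reducts exist because $\QQQ$ is a regular suborder of $\PPP$, by the reduct characterization of regular embeddings). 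Since $\calF$ is normal it contains the club filter $\CF{\kappa,\lambda}{}$ and hence each of the club sets $\mathrm{Cl}(f_\PPP)$, $\mathrm{Cl}(f_\QQQ)$, $\mathrm{Cl}(f_{\mathrm{red}})$, so that
$$ F ~=~ \Set{a\in\PP_\kappa(\lambda)}{s[a]\in\mathrm{Reg}_\kappa(\PPP)} ~\cap~ \mathrm{Cl}(f_\PPP)~\cap~\mathrm{Cl}(f_\QQQ)~\cap~\mathrm{Cl}(f_{\mathrm{red}})~\cap~\Set{a}{\gamma_0\in a} ~\in~\calF, $$
where the first set lies in $\calF$ precisely because $\PPP$ is $\calF$-layered.

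It then remains to check that $t[a]=s[a]\cap\QQQ\in\mathrm{Reg}_\kappa(\QQQ)$ for every $a\in F$. Closure under $f_\QQQ$ makes $t[a]$ an incompatibility-preserving suborder of $\QQQ$, and the cardinality bound $\betrag{t[a]}<\kappa$ is inherited from $s[a]$. For regularity I verify, via the reduct characterization, that every $q\in\QQQ$ has a reduct into $t[a]$: starting from a reduct $p\in s[a]$ of $q$ into $s[a]$ (available since $s[a]$ is regular in $\PPP$), its reduct $\hat p\in s[a]\cap\QQQ$ into $\QQQ$ (captured by $f_{\mathrm{red}}$) will be the desired reduct of $q$ into $t[a]$. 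The verification of this last point is where the closure conditions are spent: given $p'\in t[a]$ with $p'\leq_\QQQ\hat p$, the defining property of $\hat p$ yields compatibility of $p'$ and $p$ in $\PPP$, closure under $f_\PPP$ produces a common lower bound inside $s[a]$ below $p$, the reduct property of $p$ makes this lower bound compatible with $q$ in $\PPP$, and finally the regularity of $\QQQ$ in $\PPP$ transfers this compatibility back to $\QQQ$. I expect this transfer of reducts from $\PPP$ to $\QQQ$---together with the bookkeeping of which compatibilities must be witnessed inside $s[a]$ versus inside $\QQQ$---to be the only delicate part; everything else is routine closure and filter arithmetic.
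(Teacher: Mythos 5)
Your proof is correct and follows essentially the same route as the paper's: after reducing, via Proposition \ref{proposition;CharacterizeFLayered}, to exhibiting a single suitable surjection onto the regular suborder, you use normality to close under reduct-choosing and lower-bound-choosing functions and then transfer reducts through the layered set $s[a]$ exactly as the paper does (there the surjection is packaged as $\bar{s}\circ f$ with $f$ the reduct-choosing function, so that the image is automatically the trace of $\bar{s}[a]$ on the suborder, and the verification that the reduct-of-a-reduct works is the same chain of compatibilities). The only cosmetic difference is your extra closure function $f_\PPP$, which is redundant though harmless: since $s[a]\in\mathrm{Reg}_\kappa(\PPP)$ is in particular an incompatibility-preserving suborder of $\PPP$, the common lower bound $r$ of $p$ and $p'$ can already be found inside $s[a]$ without any additional closure.
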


\begin{proof}
 Let $\QQQ$ be an $\calF$-layered partial order and $\PPP$ be a regular suborder of $\QQQ$. Fix a surjection $\map{\bar{s}}{\lambda}{\QQQ}$ and a function $\map{f}{\lambda}{\lambda}$ with the property that $\bar{s}(f(\gamma))$ is a reduct of $\bar{s}(\gamma)$ into $\PPP$ for all $\gamma<\lambda$ and $f(\gamma)=\gamma$ for all $\gamma<\lambda$ with $\bar{s}(\gamma)\in\PPP$. Define $\map{s=\bar{s}\circ f}{\lambda}{\PPP}$ and fix a function $\map{g}{\lambda\times\lambda}{\lambda}$ with the property that $s(g(\gamma_0,\gamma_1))\leq_\PPP {s(\gamma_0),s(\gamma_1)}$ for all $\gamma_0,\gamma_1\in S$ with $s(\gamma_0)$ and $s(\gamma_1)$ compatible in $\PPP$.

 By our assumptions, there is an $F\in\calF$ such that $\bar{s}[a]\in\mathrm{Reg}_\kappa(\QQQ)$, $f[a]\subseteq a$ and $g[a\times a]\subseteq a$ for all $a\in F$. Then the closure of $a$ under $g$ implies that $s[a]$ is a suborder of $\PPP$. Let $p$ be a condition in $\PPP$. Then there is $\gamma\in a$ such that $\bar{s}(\gamma)$ is a reduct of $p\in\QQQ$ into $\bar{s}[a]$. 
 Pick $\delta\in a$ with $s(\delta)\leq_\PPP s(\gamma)=\bar{s}(f(\gamma))$. Since $s(\gamma)$ is a reduct of $\bar{s}(\gamma)$ into $\PPP$, this shows that the conditions $\bar{s}(\gamma)$ and $s(\delta)=\bar{s}(f(\delta))$ are compatible in $\QQQ$. Then there is $q\in\bar{s}[a]$ with $q\leq_\QQQ \bar{s}(\gamma),s(\delta)$, because $\bar{s}[a]\in\mathrm{Reg}_\kappa(\QQQ)$. By the choice of $\gamma$, this implies that the conditions $p$ and $q$ are compatible in $\QQQ$. Since $\PPP$ is a regular suborder of $\QQQ$, we can conclude that the conditions $p$ and $s(\delta)$ are compatible in both $\QQQ$ and $\PPP$. This shows that $s(\gamma)$ is a reduct of $p$ into $s[a]$. Hence $s[a]$ is a regular suborder of $\PPP$.

 By Proposition \ref{proposition;CharacterizeFLayered} and our assumptions, the above computations show that every regular suborder of $\QQQ$ is $\calF$-layered and this means that $\QQQ$ is completely $\calF$-layered.
\end{proof}

\begin{lemma}\label{lemma:FLayeredCompleteFLayered}
 If $\lambda=\lambda^{{<}\kappa}$ holds, then every completely $\calF$-layered partial order is $\kappa$-stationarily layered. 
\end{lemma}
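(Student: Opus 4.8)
The plan is to reduce everything to a regular suborder of size at most $\lambda$, where complete $\calF$-layeredness directly yields $\kappa$-stationary layeredness, and then to transfer witnesses back up to $\PPP$ using the fact that regular suborders compose. The decisive tool will be Proposition \ref{proposition:REgularClosureChainCondition}, whose cardinal-arithmetic hypothesis is exactly the assumption $\lambda=\lambda^{{<}\kappa}$ of the lemma; the point is that a completely $\calF$-layered $\PPP$ may have cardinality greater than $\lambda$, so $\calF$-layeredness cannot be applied to $\PPP$ itself and a reflection step is unavoidable.

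First I would verify that $\PPP$ satisfies the $\kappa$-chain condition. If $A\subseteq\PPP$ were an antichain with $\betrag{A}=\kappa$, then, since $\kappa\leq\lambda$, complete $\calF$-layeredness provides a regular suborder $\QQQ$ of $\PPP$ of cardinality at most $\lambda$ with $A\subseteq\QQQ$. Because the inclusion $\QQQ\subseteq\PPP$ preserves incompatibility, $A$ remains an antichain in $\QQQ$; but $\QQQ$ is $\calF$-layered, hence $\kappa$-stationarily layered by Lemma \ref{proposition:FlayeredStationaryLayered} and therefore $\kappa$-Knaster by Lemma \ref{lemma:StationaryLayeredKnaster}, so it can contain no antichain of size $\kappa$. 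This contradiction shows $\PPP$ is $\kappa$-c.c.

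Next, to show $\mathrm{Reg}_\kappa(\PPP)$ is stationary in $\PP_\kappa(\PPP)$, I would fix an arbitrary continuous cofinal (club) subset $C$ of $\PP_\kappa(\PPP)$ and look for an element of $C$ inside $\mathrm{Reg}_\kappa(\PPP)$. Applying Proposition \ref{proposition:REgularClosureChainCondition} with $\QQQ:=\PPP$ (which is $\kappa$-c.c. by the previous step), with an arbitrary $P\subseteq\PPP$ of cardinality at most $\lambda$, and with the club $C$, I obtain a regular suborder $\QQQ_0$ of $\PPP$ of cardinality at most $\lambda$ such that $C\cap\PP_\kappa(\QQQ_0)$ is club in $\PP_\kappa(\QQQ_0)$. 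This is exactly where $\lambda=\lambda^{{<}\kappa}$ is used, and it is the step doing the real work, since it pushes the club $C$ down onto a suborder small enough for complete $\calF$-layeredness to bite.

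Finally, as $\QQQ_0$ is a regular suborder of $\PPP$ of size at most $\lambda$, complete $\calF$-layeredness makes it $\calF$-layered, hence $\kappa$-stationarily layered by Lemma \ref{proposition:FlayeredStationaryLayered}; thus $\mathrm{Reg}_\kappa(\QQQ_0)$ is stationary in $\PP_\kappa(\QQQ_0)$ and so meets the club $C\cap\PP_\kappa(\QQQ_0)$. Any $a$ in this intersection is a regular suborder of $\QQQ_0$ of cardinality less than $\kappa$ with $a\in C$, and since regular suborders compose — $a$ being a regular suborder of $\QQQ_0$ and $\QQQ_0$ of $\PPP$ — we conclude $a\in\mathrm{Reg}_\kappa(\PPP)\cap C$. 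As $C$ was arbitrary, this establishes that $\PPP$ is $\kappa$-stationarily layered. I expect the only points needing care to be the composition-of-regular-suborders step, which is routine since incompatibility and maximality of antichains are preserved along both inclusions, and the preservation of club-ness under reflection, which is precisely the conclusion of Proposition \ref{proposition:REgularClosureChainCondition}.
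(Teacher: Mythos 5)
Your proof is correct and follows essentially the same route as the paper: reduce to a regular suborder $\QQQ_0$ of size at most $\lambda$ via Proposition \ref{proposition:REgularClosureChainCondition}, use $\calF$-layeredness of $\QQQ_0$ together with Lemma \ref{proposition:FlayeredStationaryLayered} to meet the reflected club, and transfer back up since regular suborders compose. The only difference is that you explicitly verify the $\kappa$-chain condition for $\PPP$ (needed to invoke Proposition \ref{proposition:REgularClosureChainCondition}), a step the paper's proof leaves implicit, so your write-up is if anything slightly more complete.
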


\begin{proof}
 Let $\QQQ$ be a completely $\calF$-layered partial order and $C$ be a club in $\PP_\kappa(\QQQ)$. By our assumptions, we can use Proposition  \ref{proposition:REgularClosureChainCondition} to construct a regular suborder $\PPP$ of $\QQQ$ of cardinality at most $\lambda$ with the property that $C\cap\PP_\kappa(\PPP)$ is a club in $\PP_\kappa(\PPP)$. An easy computation shows that $\mathrm{Reg}_\kappa(\PPP)\subseteq\mathrm{Reg}_\kappa(\QQQ)$. By definition, $\PPP$ is $\calF$-layered and Lemma \ref{proposition:FlayeredStationaryLayered} shows that $\PPP$ is $\kappa$-stationarily layered. This allows us to conclude that $\emptyset\neq C\cap\mathrm{Reg}_\kappa(\PPP)\subseteq C\cap\mathrm{Reg}_\kappa(\QQQ)$. These computations show that $\mathrm{Reg}_\kappa(\QQQ)$ is stationary in $\PP_\kappa(\QQQ)$.  
\end{proof}

\begin{lemma}\label{lemma:ProductivityCompletelyLayeredPosets}
 Let $\nu<\kappa$ be a cardinal such that $\lambda=\lambda^\nu$, $\mu^\nu<\kappa$ for every $\mu<\kappa$ and $\mathrm{Cl}(\varphi)\in\calF$ for every function $\map{\varphi}{{}^\nu\lambda}{\lambda}$. If $\lambda=\lambda^{{<}\kappa}$ holds, then the class of $\calF$-layered partial orders is closed under $\nu$-support products.  
\end{lemma}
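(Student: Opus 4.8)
The plan is to read the conclusion as asserting that the class of \emph{completely} $\calF$-layered partial orders is closed under $\nu$-support products (for products whose length exceeds $\lambda$ the $\nu$-support product may have size greater than $\lambda$, so it cannot literally be $\calF$-layered). Accordingly, let $\seq{\PPP_\delta}{\delta<\chi}$ be a sequence of completely $\calF$-layered partial orders of arbitrary length $\chi$, let $\vec{\PPP}$ denote its $\nu$-support product, and verify the two clauses in the definition of complete $\calF$-layeredness for $\vec\PPP$. The guiding idea is that every ``small'' piece of $\vec\PPP$ can be trapped inside a $\nu$-support product of \emph{small} regular suborders of the factors, and such a product is $\calF$-layered by Lemma \ref{lemma:FLayeredProducts}; the extra hypothesis $\lambda=\lambda^{{<}\kappa}$ enters only to control cardinalities (and, via Lemma \ref{lemma:FLayeredCompleteFLayered}, to guarantee that $\vec\PPP$ satisfies the $\kappa$-chain condition, so that Proposition \ref{proposition:REgularClosureChainCondition} is available should one prefer to produce the enclosing suborders that way).

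The first ingredient is the routine observation that $\nu$-support products respect regular suborders coordinatewise: if $\QQQ_\delta$ is a regular suborder of $\PPP_\delta$ for each $\delta$, then the $\nu$-support product $\vec\QQQ=\prod^\nu_\delta\QQQ_\delta$ is a regular suborder of $\vec\PPP$. Incompatibility is preserved because compatibility in a $\nu$-support product is decided coordinatewise (the union of two $\nu$-sized supports is still $\nu$-sized, hence $<\kappa$), and a reduct of a given $\vec{p}\in\vec\PPP$ into $\vec\QQQ$ is obtained by taking, on each $\delta\in\supp{\vec p}$, a reduct of $\vec p(\delta)$ into $\QQQ_\delta$ and $\mathbbm{1}_{\PPP_\delta}$ elsewhere; the reduct characterization of regular embeddings then yields regularity. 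Using this, the containment clause is direct: given $X\subseteq\vec\PPP$ with $\betrag{X}\leq\lambda$, set $S=\bigcup_{\vec x\in X}\supp{\vec x}$ (so $\betrag S\leq\lambda$), use complete $\calF$-layeredness of $\PPP_\delta$ to pick, for each $\delta\in S$, a regular suborder $\QQQ_\delta$ of $\PPP_\delta$ of size at most $\lambda$ containing $\Set{\vec x(\delta)}{\vec x\in X}$, and put $\QQQ_\delta=\{\mathbbm{1}_{\PPP_\delta}\}$ for $\delta\notin S$. Then $\vec\QQQ$ is a regular suborder of $\vec\PPP$ containing $X$, and $\lambda=\lambda^\nu$ together with $\betrag S\leq\lambda$ bounds its cardinality by $\lambda$.

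For the second clause, let $\RRR$ be a regular suborder of $\vec\PPP$ with $\betrag\RRR\leq\lambda$; I want $\RRR$ to be $\calF$-layered. Applying the construction of the previous paragraph to $X=\RRR$ produces a regular suborder $\vec\QQQ=\prod^\nu_\delta\QQQ_\delta$ of $\vec\PPP$ with $\RRR\subseteq\vec\QQQ$ and $\betrag{\vec\QQQ}\leq\lambda$, each factor $\QQQ_\delta$ being a regular suborder of $\PPP_\delta$ of size at most $\lambda$ and hence $\calF$-layered. After discarding the trivial factors (and padding back up to length $\lambda$ with one-point $\calF$-layered posets), $\vec\QQQ$ is a $\nu$-support product of length $\lambda$ of $\calF$-layered partial orders, so Lemma \ref{lemma:FLayeredProducts} (whose hypotheses $\lambda=\lambda^\nu$, $\mu^\nu<\kappa$ and $\mathrm{Cl}(\varphi)\in\calF$ are exactly those assumed here) shows that $\vec\QQQ$ is itself $\calF$-layered, and therefore completely $\calF$-layered. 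It remains to see that $\RRR$ is a regular suborder not merely of $\vec\PPP$ but of $\vec\QQQ$: since $\RRR$ and $\vec\QQQ$ are both regular suborders of $\vec\PPP$ and $\RRR\subseteq\vec\QQQ$, compatibility of members of $\RRR$ is decided the same way in $\vec\QQQ$ as in $\vec\PPP$, and a reduct of $\vec q\in\vec\QQQ$ into $\RRR$ taken with respect to $\vec\PPP$ also works with respect to $\vec\QQQ$. As $\vec\QQQ$ is completely $\calF$-layered and $\RRR$ is a regular suborder of it of size at most $\lambda$, the definition gives that $\RRR$ is $\calF$-layered, completing the second clause.

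The step I expect to be the main obstacle is the second clause, namely transporting $\calF$-layeredness from the well-behaved enclosing product $\vec\QQQ$ down to an arbitrary regular suborder $\RRR$. Everything hinges on two facts that must be checked with care: the \emph{intermediate regularity} principle ($\RRR,\vec\QQQ$ regular in $\vec\PPP$ with $\RRR\subseteq\vec\QQQ$ forces $\RRR$ regular in $\vec\QQQ$), and the earlier lemma that $\calF$-layeredness upgrades to complete $\calF$-layeredness, which is precisely what lets us pass from ``$\vec\QQQ$ is $\calF$-layered'' to ``$\RRR$ is $\calF$-layered''. By contrast, the coordinatewise product-of-regular-suborders computation and the cardinal bookkeeping (using $\lambda=\lambda^\nu$, and $\lambda=\lambda^{{<}\kappa}$ where convenient) are routine.
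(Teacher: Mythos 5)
Your proposal is correct and takes essentially the same route as the paper's proof: both read the conclusion as closure of the \emph{completely} $\calF$-layered partial orders under $\nu$-support products, trap any $\lambda$-sized subset (or regular suborder) of the big product inside a $\nu$-support product of $\lambda$-sized regular suborders of the factors, apply Lemma \ref{lemma:FLayeredProducts} to see that this trapping product is $\calF$-layered (hence completely $\calF$-layered), and then use intermediate regularity to conclude that the given regular suborder is $\calF$-layered. The only differences are presentational: you make explicit the coordinatewise product-of-regular-suborders computation, the intermediate-regularity step, and the padding of the trapping product to length $\lambda$, all of which the paper leaves implicit.
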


\begin{proof}
 Let $\seq{\QQQ_\delta}{\delta<\rho}$ be a sequence of completely $\calF$-layered partial orders and let $\vec{\QQQ}=\prod_{\delta<\rho}\QQQ_\delta$ denote the corresponding $\nu$-support product.

Let $P$ be a subset of $\vec{\QQQ}$ of cardinality at most $\lambda$. By our assumptions, there is $S\in[\rho]^\lambda$ and a sequence $\seq{\PPP_\delta}{\delta<\rho}$ such that $\PPP_\delta$ is a regular suborder of $\QQQ_\delta$ of cardinality at most $\lambda$ for every $\delta\in S$, $\PPP_\delta$ is the trivial suborder of $\QQQ_\delta$ for every $\delta\in\rho\setminus S$ and $P$ is contained in the corresponding $\nu$-support product $\vec{\PPP}=\prod_{\delta<\rho}\PPP_\delta$. Then $\vec{\PPP}$ is a regular suborder of $\vec{\QQQ}$ and $\vec{\PPP}$ has cardinality at most $\lambda$.

Next, let $\PPP$ be a regular suborder of $\vec{\QQQ}$ of cardinality at most $\lambda$. By the above computations, there is $S\in[\rho]^\lambda$ and a sequence $\seq{\PPP_\delta}{\delta<\rho}$ such that $\PPP_\delta$ is a regular suborder of $\QQQ_\delta$ of cardinality at most $\lambda$ for every $\delta\in S$, $\PPP_\delta$ is the trivial suborder of $\QQQ_\delta$ for every $\delta\in\rho\setminus S$ and $\PPP$ is contained in the corresponding $\nu$-support product $\vec{\PPP}=\prod_{\delta<\rho}\PPP_\delta$. Then $\PPP$ is a regular suborder of $\vec{\PPP}$. Since $\PPP_\delta$ is $\calF$-layered for every $\delta\in S$, our assumptions and Lemma \ref{lemma:FLayeredProducts} show that $\vec{\PPP}$ is $\calF$-layered. By Lemma \ref{lemma:FLayeredCompleteFLayered}, $\vec{\PPP}$ is completely $\calF$-layered and we can conclude that $\PPP$ is $\calF$-layered.  
\end{proof}

\begin{corollary}\label{corollary:ProductsCompletelyFLayered}
 \begin{enumerate}
  \item The class of completely $\calF$-layered partial orders is closed under finite support products.

  \item Let $\nu<\kappa$ be an infinite cardinal with $\lambda=\lambda^\nu$ and $\mu^\nu<\kappa$ for all $\mu<\kappa$. If $\calF$ extends the $\nu$-club filter on $\PP_\kappa(\lambda)$ and $\lambda=\lambda^{{<}\kappa}$ holds, then the class of completely $\calF$-layered partial orders is closed under $\nu$-support products.

  \item If $\kappa$ is a weakly compact cardinal, then the class of completely $\calF_{wc(\kappa)}$-layered partial orders is closed under $\nu$-support products for every $\nu<\kappa$.

  \item If $\calU$ is a normal fine ultrafilter on $\PP_\kappa(\lambda)$ and $\lambda=\lambda^{{<}\kappa}$ holds, then the class of completely $\calU$-layered partial orders is closed under $\nu$-support products for every $\nu<\kappa$.   \qed
 \end{enumerate}
\end{corollary}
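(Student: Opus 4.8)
The plan is to obtain all four statements by specializing Lemma~\ref{lemma:ProductivityCompletelyLayeredPosets} to a suitable support cardinal $\nu$, so that the only genuine work is to check the cardinal-arithmetic hypotheses of that lemma and, most importantly, the requirement that $\mathrm{Cl}(\varphi)\in\calF$ for every function $\map{\varphi}{{}^\nu\lambda}{\lambda}$. In each case this last condition is exactly what the corresponding item of Example~\ref{example:Relevant Filters} provides, so the strategy is to verify the arithmetic from the ambient large-cardinal or closure assumption and then read off the membership of $\mathrm{Cl}(\varphi)$ in $\calF$ from that item.

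For statement (2), every hypothesis of Lemma~\ref{lemma:ProductivityCompletelyLayeredPosets} is assumed outright except the condition on $\mathrm{Cl}(\varphi)$; since $\calF$ extends the $\nu$-club filter $\CF{\kappa,\lambda}{\nu}$ and, by Example~\ref{example:Relevant Filters}.(2), every $\mathrm{Cl}(\varphi)$ with $\map{\varphi}{{}^\nu\lambda}{\lambda}$ lies in $\CF{\kappa,\lambda}{\nu}$, we get $\mathrm{Cl}(\varphi)\in\calF$ and the lemma applies verbatim. For statement (4), I would first note that the existence of a normal fine ultrafilter on $\PP_\kappa(\lambda)$ forces $\kappa$ to be inaccessible, so that $\mu^\nu<\kappa$ holds for all $\mu,\nu<\kappa$, while $\lambda=\lambda^{{<}\kappa}$ gives $\lambda=\lambda^\nu$ for every $\nu<\kappa$; combined with Example~\ref{example:Relevant Filters}.(4), which shows $\mathrm{Cl}(\varphi)\in\calU$ for every $\map{\varphi}{{}^\nu\lambda}{\lambda}$, Lemma~\ref{lemma:ProductivityCompletelyLayeredPosets} then applies for each $\nu<\kappa$. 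Statement (3) is the case $\lambda=\kappa$: weak compactness makes $\kappa$ inaccessible, whence $\kappa^\nu=\kappa=\kappa^{{<}\kappa}$ and $\mu^\nu<\kappa$ for all $\mu,\nu<\kappa$, and Example~\ref{example:Relevant Filters}.(\ref{example:item:WCFilter}) gives that $\calF_{wc(\kappa)}$ extends each $\nu$-club filter and therefore contains every relevant $\mathrm{Cl}(\varphi)$, so again Lemma~\ref{lemma:ProductivityCompletelyLayeredPosets} yields closure under $\nu$-support products for every $\nu<\kappa$.

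The main obstacle is statement (1), which is asserted with no cardinal-arithmetic hypothesis and hence is \emph{not} a literal instance of Lemma~\ref{lemma:ProductivityCompletelyLayeredPosets}, whose proof invokes the bounds $\lambda=\lambda^\nu$ and $\lambda=\lambda^{{<}\kappa}$. My plan here is to re-run the proof of Lemma~\ref{lemma:ProductivityCompletelyLayeredPosets} with finite supports in place of $\nu$-supports, observing that in the finite-support setting these arithmetic hypotheses become superfluous: a finite-support product of at most $\lambda$-many partial orders of size at most $\lambda$ again has size at most $\lambda=\lambda^{{<}\omega}$, so the reduction (via the density clause of complete $\calF$-layeredness of the factors) to a regular suborder $\vec\PPP=\prod_\delta\PPP_\delta$ of size at most $\lambda$ goes through unchanged, and the requirement $\mathrm{Cl}(\varphi)\in\calF$ is now needed only for finitary $\varphi$, which holds because $\calF$ is normal and hence contains the club filter while, by Example~\ref{example:Relevant Filters}.(1), every such $\mathrm{Cl}(\varphi)$ lies in that filter. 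To see that $\vec\PPP$ is $\calF$-layered I would appeal to the finite-support productivity of $\calF$-layered partial orders recorded after Lemma~\ref{lemma:FLayeredProducts}; then, since every $\calF$-layered partial order is completely $\calF$-layered, each regular suborder of the finite-support product $\vec\QQQ$ of size at most $\lambda$ embeds as a regular suborder of some such $\vec\PPP$ and is therefore itself $\calF$-layered, which is precisely complete $\calF$-layeredness of $\vec\QQQ$.
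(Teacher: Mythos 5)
Your proposal is correct and matches the paper's intended derivation: the paper states this corollary without proof as an immediate consequence of Lemma~\ref{lemma:ProductivityCompletelyLayeredPosets} and Example~\ref{example:Relevant Filters}, which is exactly what you do for parts (2)--(4) by checking the cardinal arithmetic and reading off $\mathrm{Cl}(\varphi)\in\calF$ from the relevant item of that example. Your treatment of part (1) --- re-running the lemma's proof with finite supports, where the hypotheses $\lambda=\lambda^\nu$, $\mu^\nu<\kappa$ and $\lambda=\lambda^{{<}\kappa}$ trivialize or become unnecessary, and normality of $\calF$ supplies $\mathrm{Cl}(\varphi)\in\calF$ for all finitary $\varphi$ --- fills in precisely the adaptation that the paper leaves implicit, since finite-support products are not literally $\nu$-support products for any infinite $\nu$.
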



\section{Layering at large cardinals}\label{section:FilterLargeCardinals}

In this section, we show that, if a filter $\calF$ on $\PP_\kappa(\lambda)$ is induced by certain large cardinal properties of $\kappa$, then the class of all partial orders of cardinality at most $\lambda$ that satisfy the $\kappa$-chain condition is equal to the class of $\calF$-layered partial orders.

\begin{lemma}\label{lemma:WeaklyCompactFwcLayered}
 If $\kappa$ is weakly compact, then every partial order of cardinality at most $\kappa$ that satisfies the $\kappa$-chain condition is $\calF_{wc(\kappa)}$-layered. 
\end{lemma}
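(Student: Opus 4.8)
The plan is to apply Proposition \ref{proposition;CharacterizeFLayered}, which reduces the claim to exhibiting a single well-chosen surjection $\map{s}{\kappa}{\PPP}$ for which $\Set{a\in\PP_\kappa(\kappa)}{s[a]\in\mathrm{Reg}_\kappa(\PPP)}\in\calF_{wc(\kappa)}$. If $\betrag{\PPP}<\kappa$ then this set contains a tail, so I would assume $\betrag{\PPP}=\kappa$ and, fixing a bijection, take the underlying set of $\PPP$ to be $\kappa$ with $s=\id$. Then $s[\alpha]$ is simply the suborder $\PPP{\restriction}\alpha=(\alpha,\leq_\PPP\cap\,\alpha^2)$, and the goal becomes to show that $\Set{\alpha<\kappa}{\PPP{\restriction}\alpha\in\mathrm{Reg}_\kappa(\PPP)}$ lies in $\calF_{wc(\kappa)}$, each $\alpha$ being viewed as an element of $\PP_\kappa(\kappa)$.

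The strategy is to realize this set as an intersection of one reflection set with sets coming from the club filter and from the inaccessibles, all of which belong to $\calF_{wc(\kappa)}$ by Example \ref{example:Relevant Filters}.(\ref{example:item:WCFilter}). Concretely, I would fix: a set $A$ coding $\leq_\PPP$; a function $\map{g}{\kappa\times\kappa}{\kappa}$ with $g(\beta,\gamma)\leq_\PPP\beta,\gamma$ whenever $\beta,\gamma$ are compatible; and a function $E$ assigning to each antichain of $\PPP$ a maximal antichain of $\PPP$ extending it (these exist as maximal antichains are bounded, hence lie in $[\kappa]^{<\kappa}\subseteq\VV_\kappa$, by the $\kappa$-chain condition). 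Let $\Phi$ be the $\Pi^1_1$ statement ``every maximal antichain is bounded,'' which holds in $\VV_\kappa$ exactly because $\PPP$ is $\kappa$-c.c.\ and $\kappa$ is regular. Since the second-order quantifier in $\VV_\alpha\models\Phi(A\cap\VV_\alpha)$ ranges over all subsets of $\VV_\alpha$ (not merely the bounded ones), for inaccessible $\alpha$, where $A\cap\VV_\alpha=\,\leq_\PPP\cap\,\alpha^2$, this says precisely that $\PPP{\restriction}\alpha$ is $\alpha$-c.c. I would then put $T^*=R_{\Phi,A}\cap\Set{\alpha<\kappa}{\alpha\text{ inaccessible}}\cap\mathrm{Cl}(g)\cap\mathrm{Cl}(f)$, where $f(\beta)=\sup\Set{\sup(E(a))+1}{a\subseteq\beta\text{ an antichain}}<\kappa$ (using that $\kappa$ is a strong limit, so $\beta$ has fewer than $\kappa$ subsets); as an intersection of members of $\calF_{wc(\kappa)}$, the set $T^*$ lies in $\calF_{wc(\kappa)}$.

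Finally, I would verify $T^*\subseteq\Set{\alpha}{\PPP{\restriction}\alpha\in\mathrm{Reg}_\kappa(\PPP)}$. Fix $\alpha\in T^*$. Closure under $g$ shows $\PPP{\restriction}\alpha$ is a suborder, since compatible conditions below $\alpha$ acquire a witness below $\alpha$, so incompatibility is preserved. For regularity, let $a$ be a maximal antichain of $\PPP{\restriction}\alpha$; as $\PPP{\restriction}\alpha$ is $\alpha$-c.c.\ and $\alpha$ is regular, $a$ is bounded, say $a\subseteq\beta<\alpha$, and $a$ is an antichain of $\PPP$ because $\PPP{\restriction}\alpha$ is a suborder. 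Closure under $f$ then forces $E(a)\subseteq\alpha$, so $E(a)$ is an antichain of $\PPP{\restriction}\alpha$ containing $a$; maximality of $a$ gives $a=E(a)$, whence $a$ is a maximal antichain of $\PPP$. Thus $\PPP{\restriction}\alpha$ is a regular suborder of cardinality $\betrag{\alpha}<\kappa$, i.e.\ an element of $\mathrm{Reg}_\kappa(\PPP)$.

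The main obstacle I anticipate is the regularity requirement on maximal antichains, which is the one place that genuinely uses weak compactness rather than mere inaccessibility: the club-filter closure conditions handle only boundedly presented antichains, and it is precisely the $\Pi^1_1$-reflection of the $\kappa$-chain condition --- guaranteeing that $\PPP{\restriction}\alpha$ is $\alpha$-c.c., so that \emph{every} maximal antichain of $\PPP{\restriction}\alpha$ is bounded --- that allows the closure under $E$ to turn such an antichain into a maximal antichain of the whole poset. A secondary point requiring care is confirming that the second-order quantifiers in the reflected structure range over all subsets of $\VV_\alpha$, as this is exactly what makes $\VV_\alpha\models\Phi$ express the $\alpha$-chain condition.
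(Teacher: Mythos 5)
Your proof is correct and follows essentially the same route as the paper's: both arguments combine $\Pi^1_1$-reflection of the $\kappa$-chain condition (yielding inaccessible $\alpha$ where the restricted poset is $\alpha$-c.c., so its maximal antichains are bounded) with club-closure conditions guaranteeing that the restriction is a suborder and that bounded antichains extend to maximal antichains of $\PPP$ inside the restriction. The only difference is cosmetic: the paper packages your closure functions $g$, $E$, $f$ into a single club $C$ and arranges the reflecting set to consist of inaccessible limit points of $C$, whereas you intersect the reflection set with $\mathrm{Cl}(g)\cap\mathrm{Cl}(f)$ and the inaccessibles directly.
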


\begin{proof}
 Let $\PPP$ be a partial order of cardinality at most $\kappa$ satisfying the $\kappa$-chain condition and let $\map{s}{\kappa}{\PPP}$ be a surjection. Since $\kappa$ is inaccessible, there is a club $C$ in $\kappa$ such that the following statements hold for all $\alpha,\beta\in C$ with $\alpha<\beta$: 
 \begin{enumerate}
  \item $s[\alpha]$ is a suborder of $\PPP$. 

  \item If $A$ is an antichain in $s[\alpha]$, then there is a maximal antichain $\bar{A}$ in $\PPP$ with $A\subseteq\bar{A}\subseteq s[\beta]$. 
 \end{enumerate}

 By coding the $s$-preimage of the ordering of $\PPP$ and the incompatibility relation of $\PPP$ into a subset of $\kappa$, we find $\langle\Phi,A,a\rangle$ as in Example \ref{example:Relevant Filters}.(\ref{example:item:WCFilter}) such that the corresponding set $R_{\Phi,A,a}$ consists of inaccessible cardinals $\mu\in\Lim(C)$ with the property that the partial order $s[\mu]$ satisfies the $\mu$-chain condition.

 Pick $\mu\in R_{\Phi,A,a}$ and let $A$ be a maximal antichain in $s[\mu]$. Then we can find $\alpha\in\mu\cap C$ with $A\subseteq s[\alpha]$, because $s[\mu]$ satisfies the $\mu$-chain condition. By the definition of $C$, there is a maximal antichain $\bar{A}$ in $\PPP$ with $A\subseteq\bar{A}\subseteq s[\mu]$. Since $s[\mu]$ is a suborder of $\PPP$, we can conclude that $\bar{A}$ is an antichain in $s[\mu]$ and $A=\bar{A}$ is a maximal antichain in $\PPP$. Hence $s[\mu]$ is a regular suborder of $\PPP$. These computations show that $\PPP$ is $\calF_{wc(\kappa)}$-layered.  
\end{proof}

\begin{proof}[Proof of Theorem \ref{theorem:WeaklyCompactEquivalentChainCondition}]
 Let $\kappa$ be a weakly compact cardinal. By Lemma \ref{lemma:StationaryLayeredKnaster} and Lemma \ref{lemma:FLayeredCompleteFLayered}, every completely $\calF_{wc(\kappa)}$-layered partial order is $\kappa$-stationarily layered and $\kappa$-Knaster. Let $\PPP$ be a partial order satisfying the $\kappa$-chain condition. Then Proposition \ref{proposition:REgularClosureChainCondition} implies that every subset of $\PPP$ of cardinality at most $\kappa$ is contained in a regular suborder of $\PPP$ of cardinality at most $\kappa$ and Lemma \ref{lemma:WeaklyCompactFwcLayered} shows that every regular suborder of $\PPP$ of cardinality at most $\kappa$ is $\calF_{wc(\kappa)}$-layered. This shows that $\PPP$ is completely $\calF_{wc(\kappa)}$-layered. 
\end{proof}

A combination of Theorem \ref{theorem:WeaklyCompactEquivalentChainCondition} with Corollary \ref{corollary:ProductsCompletelyFLayered} yields the following statement that provides an alternative proof of Proposition \ref{proposition:WCyieldsCkappa}.

\begin{corollary}
If $\kappa$ is a weakly compact and $\nu<\kappa$, then $\nu$-support products of partial orders satisfying the $\kappa$-chain condition are  $\kappa$-stationarily layered.  \qed
\end{corollary}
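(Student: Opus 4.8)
The plan is to read off the statement as a direct combination of the characterization in Theorem \ref{theorem:WeaklyCompactEquivalentChainCondition} with the product-closure result in Corollary \ref{corollary:ProductsCompletelyFLayered}, so the work is almost entirely bookkeeping once those two results are in hand. First I would fix a sequence $\seq{\PPP_\gamma}{\gamma<\rho}$ of partial orders, each satisfying the $\kappa$-chain condition, and form the corresponding $\nu$-support product $\vec{\PPP}=\prod_{\gamma<\rho}\PPP_\gamma$. Since $\kappa$ is weakly compact and the equivalence between the $\kappa$-chain condition and complete $\calF_{wc(\kappa)}$-layeredness in Theorem \ref{theorem:WeaklyCompactEquivalentChainCondition} holds for \emph{every} partial order, each factor $\PPP_\gamma$ is completely $\calF_{wc(\kappa)}$-layered.

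Next I would invoke the third part of Corollary \ref{corollary:ProductsCompletelyFLayered}, which states that for a weakly compact $\kappa$ the class of completely $\calF_{wc(\kappa)}$-layered partial orders is closed under $\nu$-support products for every $\nu<\kappa$. Applying this to the factors $\seq{\PPP_\gamma}{\gamma<\rho}$ gives that $\vec{\PPP}$ itself is completely $\calF_{wc(\kappa)}$-layered. Finally, applying Theorem \ref{theorem:WeaklyCompactEquivalentChainCondition} once more to $\vec{\PPP}$ — in the direction from complete $\calF_{wc(\kappa)}$-layeredness to $\kappa$-stationary layeredness — yields the conclusion; equivalently, one may cite Lemma \ref{lemma:FLayeredCompleteFLayered}, whose hypothesis $\lambda=\lambda^{{<}\kappa}$ is automatic here since $\calF_{wc(\kappa)}$ lives on $\PP_\kappa(\kappa)$, so the relevant $\lambda$ is $\kappa$ and $\kappa=\kappa^{{<}\kappa}$ by the inaccessibility of $\kappa$.

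There is no serious obstacle, since the paper has already isolated the two substantial ingredients; the only points I would want to double-check are that Theorem \ref{theorem:WeaklyCompactEquivalentChainCondition} is genuinely stated for partial orders of arbitrary cardinality (so that the possibly large factors are legitimately completely $\calF_{wc(\kappa)}$-layered, rather than merely the size-$\kappa$ ones), and that the length $\rho$ of the product carries no restriction in the third part of Corollary \ref{corollary:ProductsCompletelyFLayered}. Both hold, so the corollary follows by chaining the three invocations above.
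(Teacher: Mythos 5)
Your proof is correct and is essentially identical to the paper's own argument: the paper derives this corollary by simply combining Theorem \ref{theorem:WeaklyCompactEquivalentChainCondition} with Corollary \ref{corollary:ProductsCompletelyFLayered}, which is exactly your chain of invocations (factors are completely $\calF_{wc(\kappa)}$-layered, the product stays in that class, and complete layeredness gives stationary layeredness via Lemma \ref{lemma:FLayeredCompleteFLayered}, whose hypothesis holds since $\kappa=\kappa^{{<}\kappa}$ by inaccessibility). The side conditions you flagged --- that Theorem \ref{theorem:WeaklyCompactEquivalentChainCondition} applies to partial orders of arbitrary cardinality and that the product length is unrestricted in Corollary \ref{corollary:ProductsCompletelyFLayered} --- do indeed hold, so nothing further is needed.
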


In the following, we prove the analogs of the above results for $\lambda$-supercompact cardinals. The following lemma is a consequence of {\cite[Lemma 30]{Cox_Layerings}}. We present a short  direct proof for the reader's convenience.

\begin{lemma}\label{lemma:SupercompactAllPOLambdaULayered}
 If $\kappa$ is a $\lambda$-supercompact cardinal and $\calU$ is a normal fine ultrafilter on $\PP_\kappa(\lambda)$, then every partial order of cardinality at most $\lambda$ that satisfies the $\kappa$-chain condition is $\calU$-layered. 
\end{lemma}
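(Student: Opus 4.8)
The plan is to exploit the normal fine ultrafilter $\calU$ directly through its ultrapower embedding $\map{j_\calU}{\VV}{N}$, mirroring the strategy of Lemma \ref{lemma:WeaklyCompactFwcLayered} but replacing the weakly compact filter machinery with the supercompactness embedding. First I would fix a partial order $\PPP$ of cardinality at most $\lambda$ satisfying the $\kappa$-chain condition, together with a surjection $\map{s}{\lambda}{\PPP}$. By Proposition \ref{proposition;CharacterizeFLayered}, it suffices to show that the set $\Set{a\in\PP_\kappa(\lambda)}{s[a]\in\mathrm{Reg}_\kappa(\PPP)}$ lies in $\calU$. Using the characterization $\calU=\Set{S\subseteq\PP_\kappa(\lambda)}{j_\calU[\lambda]\in j_\calU(S)}$ recalled in Example \ref{example:Relevant Filters}, this reduces to verifying a single statement in $N$: namely that $j_\calU(s)[j_\calU[\lambda]]$ is a regular suborder of $j_\calU(\PPP)$ in the sense of $N$.

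The key computation is to identify $j_\calU(s)[j_\calU[\lambda]]$ with the pointwise image $j_\calU[\PPP]$. Indeed, for $\gamma<\lambda$ we have $j_\calU(s)(j_\calU(\gamma))=j_\calU(s(\gamma))$, so as $\gamma$ ranges over $\lambda$ the condition $j_\calU(s)$ applied to $j_\calU[\lambda]$ produces exactly $\Set{j_\calU(p)}{p\in\PPP}=j_\calU[\PPP]$. Thus the task becomes: show that $j_\calU[\PPP]$ is a regular suborder of $j_\calU(\PPP)$ inside $N$. Order and incompatibility preservation are immediate since $j_\calU$ is elementary, so the substance lies in the regularity condition — showing that every element of $j_\calU(\PPP)$ has a reduct into $j_\calU[\PPP]$, equivalently that every maximal antichain of $j_\calU[\PPP]$ stays maximal in $j_\calU(\PPP)$.

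Here is where the $\kappa$-chain condition does its work. Let $A$ be a maximal antichain of $j_\calU[\PPP]$; pulling back along the isomorphism $\map{j_\calU}{\PPP}{j_\calU[\PPP]}$, it corresponds to a maximal antichain $A_0$ of $\PPP$, which by the $\kappa$-chain condition has size less than $\kappa$. Since $\kappa$ is the critical point of $j_\calU$ and $\betrag{A_0}<\kappa$, elementarity gives $j_\calU(A_0)=j_\calU[A_0]=A$. Now $j_\calU(A_0)$ is a maximal antichain of $j_\calU(\PPP)$ in $N$ by elementarity applied to the maximality of $A_0$ in $\PPP$. Therefore $A$ is maximal in $j_\calU(\PPP)$, as required. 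The main obstacle — and the crux of the argument — is precisely this use of $\betrag{A_0}<\kappa=\mathrm{crit}(j_\calU)$ to force $j_\calU$ to fix the antichain setwise; without the $\kappa$-chain condition, a maximal antichain of $j_\calU[\PPP]$ could have image strictly contained in a larger maximal antichain of $j_\calU(\PPP)$, and regularity would fail.

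Assembling these observations, $j_\calU[\PPP]=j_\calU(s)[j_\calU[\lambda]]$ is a regular suborder of $j_\calU(\PPP)$ in $N$, and since $\betrag{j_\calU[\PPP]}<j_\calU(\kappa)$ in $N$ (again by $\betrag{\PPP}\leq\lambda$ together with the chain condition bounding antichains below $\kappa$), this is exactly the statement $j_\calU[\lambda]\in j_\calU(\Set{a\in\PP_\kappa(\lambda)}{s[a]\in\mathrm{Reg}_\kappa(\PPP)})$. By the ultrafilter characterization this places the relevant set in $\calU$, and Proposition \ref{proposition;CharacterizeFLayered} then yields that $\PPP$ is $\calU$-layered.
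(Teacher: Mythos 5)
Your proposal is correct and takes essentially the same route as the paper's proof: both work with the ultrapower embedding $j_\calU$, identify $j_\calU(s)[j_\calU[\lambda]]$ with $j_\calU[\PPP]$, and use the $\kappa$-chain condition to ensure that a maximal antichain $A_0$ of $\PPP$ has size below the critical point, so that $j_\calU(A_0)=j_\calU[A_0]$ and elementarity makes $j_\calU[\PPP]$ a regular suborder of $j_\calU(\PPP)$ in $N$. One minor inaccuracy: the bound $\betrag{j_\calU[\PPP]}<j_\calU(\kappa)$ in $N$ follows from $\betrag{\PPP}\leq\lambda$ together with the standard fact that $\lambda<j_\calU(\kappa)$ for a $\lambda$-supercompactness embedding, not from the chain condition bounding antichains.
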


\begin{proof}
 Let $\map{j_\calU}{\VV}{N}$ be the ultrapower embedding corresponding to $\calU$. Fix a partial order $\PPP$ of cardinality at most $\lambda$ satisfying the $\kappa$-chain condition and a surjection $\map{s}{\lambda}{\PPP}$. By elementarity, $j_\calU[\PPP]$ is a suborder of $j_\calU(\PPP)$ and the map $\map{j_\calU\restriction\PPP}{\PPP}{j_\calU[\PPP]}$ is an isomorphism of partial orders. Let $A$ be a maximal antichain in $j_\calU[\PPP]$. Then $A_0=j_\calU^{{-}1}[A]$ is a maximal antichain in $\PPP$ and, since $\PPP$ satisfies the $\kappa$-chain condition, we have $\betrag{A}<\kappa$. By elementarity, this implies that $A=j_\calU[A_0]=j_\calU(A_0)$ is a maximal antichain in $j_\calU(\PPP)$. This argument shows that $j_\calU[\PPP]$ is a regular suborder of $j_\calU(\PPP)$ in $N$. Since $$j_\calU(s)[j_\calU[\lambda]] ~ = ~ j_\calU[s[\lambda]] ~ = ~ j_\calU[\PPP] ~ \in ~ \mathrm{Reg}_{j_\calU(\kappa)}(j_\calU(\PPP))^N ~ = ~ j_\calU(\mathrm{Reg}_\kappa(\PPP)),$$ we can conclude that $\Set{a\in\PP_\kappa(\lambda)}{s[a]\in\mathrm{Reg}_\kappa(\PPP)}\in\calU$. 
\end{proof}

\begin{theorem}\label{theorem:SupercompactULayered}
  If $\kappa$ is a $\lambda$-supercompact cardinal, $\calU$ is a normal fine ultrafilter on $\PP_\kappa(\lambda)$ and $\lambda=\lambda^{{<}\lambda}$ holds, then the following statements are equivalent for every partial order $\PPP$: 
 \begin{enumerate}
  \item $\PPP$ satisfies the $\kappa$-chain condition. 

  \item $\PPP$ is $\kappa$-Knaster. 

  \item $\PPP$ is $\kappa$-stationarily layered. 

  \item $\PPP$ is completely $\calU$-layered. 
 \end{enumerate}
\end{theorem}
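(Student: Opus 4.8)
The plan is to establish a cycle of implications among the four statements, mirroring the structure of Theorem~\ref{theorem:WeaklyCompactEquivalentChainCondition} but now drawing on the $\lambda$-supercompactness hypothesis rather than weak compactness. The implications $(4)\Rightarrow(3)\Rightarrow(2)\Rightarrow(1)$ are cheap and require no new work: Lemma~\ref{lemma:FLayeredCompleteFLayered} (applicable because $\lambda=\lambda^{{<}\lambda}$ forces $\lambda=\lambda^{{<}\kappa}$) gives $(4)\Rightarrow(3)$, Lemma~\ref{lemma:StationaryLayeredKnaster} gives $(3)\Rightarrow(2)$, and the $\kappa$-Knaster property trivially implies the $\kappa$-chain condition for $(2)\Rightarrow(1)$. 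So the entire content of the theorem lies in the single arrow $(1)\Rightarrow(4)$.

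\par

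To prove $(1)\Rightarrow(4)$, I would follow the template of the proof of Theorem~\ref{theorem:WeaklyCompactEquivalentChainCondition}, substituting Lemma~\ref{lemma:SupercompactAllPOLambdaULayered} for Lemma~\ref{lemma:WeaklyCompactFwcLayered} and $\calU$ for $\calF_{wc(\kappa)}$. Let $\PPP$ satisfy the $\kappa$-chain condition. Complete $\calU$-layeredness has two clauses to verify. First, every subset of $\PPP$ of cardinality at most $\lambda$ must be contained in a regular suborder of $\PPP$ of cardinality at most $\lambda$; this follows from Proposition~\ref{proposition:REgularClosureChainCondition}, whose hypothesis $\lambda=\lambda^{{<}\kappa}$ is supplied by $\lambda=\lambda^{{<}\lambda}$. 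Second, every regular suborder $\QQQ$ of $\PPP$ of size at most $\lambda$ must itself be $\calU$-layered. Here the key observation is that a regular suborder of a $\kappa$-chain-condition poset again satisfies the $\kappa$-chain condition (since its maximal antichains remain maximal, hence of size $<\kappa$, in $\PPP$), so Lemma~\ref{lemma:SupercompactAllPOLambdaULayered} applies directly to $\QQQ$ to yield $\calU$-layeredness. Combining the two clauses shows $\PPP$ is completely $\calU$-layered, closing the cycle.

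\par

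I expect the main obstacle to be entirely bookkeeping rather than conceptual: verifying that the cardinal-arithmetic hypothesis $\lambda=\lambda^{{<}\lambda}$ really delivers every instance of $\lambda=\lambda^{{<}\kappa}$ needed by the cited lemmas, and confirming that the normal fine ultrafilter $\calU$ meets the normality requirement tacitly assumed in Section~\ref{section:FLayeredPoset} (this is guaranteed since fine normal ultrafilters on $\PP_\kappa(\lambda)$ are in particular normal filters). The genuinely load-bearing input, namely the ultrapower computation that $s[a]\in\mathrm{Reg}_\kappa(\PPP)$ for $\calU$-almost all $a$, is already isolated in Lemma~\ref{lemma:SupercompactAllPOLambdaULayered}, so no new combinatorial argument is required beyond assembling these pieces.
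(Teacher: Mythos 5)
Your proposal is correct and follows essentially the same route as the paper's own proof: the cycle $(1)\Rightarrow(4)\Rightarrow(3)\Rightarrow(2)\Rightarrow(1)$, with Proposition~\ref{proposition:REgularClosureChainCondition} and Lemma~\ref{lemma:SupercompactAllPOLambdaULayered} carrying the implication $(1)\Rightarrow(4)$ (using that regular suborders inherit the $\kappa$-chain condition), and Lemmas~\ref{lemma:FLayeredCompleteFLayered} and~\ref{lemma:StationaryLayeredKnaster} giving the remaining arrows. Your observation that $\lambda=\lambda^{{<}\lambda}$ supplies the needed instances of $\lambda=\lambda^{{<}\kappa}$ is exactly the bookkeeping the paper leaves implicit.
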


\begin{proof}
 By Lemma \ref{lemma:StationaryLayeredKnaster} and Lemma \ref{lemma:FLayeredCompleteFLayered}, every completely $\calU$-layered partial order is $\kappa$-stationarily layered and $\kappa$-Knaster. Let $\PPP$ be a partial order satisfying the $\kappa$-chain condition. Then Proposition \ref{proposition:REgularClosureChainCondition} implies that every subset of $\PPP$ of cardinality at most $\lambda$ is contained in a regular suborder of $\PPP$ of cardinality at most $\lambda$ and Lemma \ref{lemma:SupercompactAllPOLambdaULayered} shows that every regular suborder of $\PPP$ of cardinality at most $\lambda$ is $\calU$-layered. This shows that $\PPP$ is completely $\calU$-layered. 
\end{proof}

In the remainder of this section, we prove Theorem \ref{theorem:KunenModelNonWeaklyCompactKnasterLayered}. This proof uses a construction of Kunen presented in {\cite[Section 3]{MR495118}} showing that it is possible to use forcing to first destroy the weak compactness of a cardinal and then resurrect it by a further forcing. In order to describe this construction, we recall some basic definitions about set-theoretic trees.

Remember that a partial order $\TTT$ is a \emph{tree} if it has a unique minimal element and for every $t\in\TTT$, the set $\mathrm{pred}_\TTT(t)=\Set{s\in\TTT}{s<_\TTT t}$ is well-ordered by $<_\TTT$. In this situation, we set $\length{t}{\TTT}=\otp{\mathrm{pred}_\TTT(t),<_\TTT}$, $\TTT(\alpha)=\Set{t\in\TTT}{\length{t}{\TTT}=\alpha}$ and $\TTT_{{<}\alpha}=\Set{t\in\TTT}{\length{t}{\TTT}<\alpha}$ for every ordinal $\alpha$. We let $\height{\TTT}=\lub_{t\in\TTT}\length{t}{\TTT}$ denote the \emph{height of a tree $\TTT$} and we set $\TTT\restriction A=\bigcup_{\alpha\in A}\TTT(\alpha)$ for every $A\subseteq\height{\TTT}$. A subset $B$ of a tree $\TTT$ is a \emph{cofinal branch through $\TTT$} if $B$ is downwards closed with respect to $\leq_\TTT$ and $B$ is well-ordered by $<_\TTT$ with order-type $\height{\TTT}$. Given an uncountable regular cardinal $\kappa$, a tree $\TTT$ is a \emph{$\kappa$-Aronszajn tree} if $\height{\TTT}=\kappa$, $\TTT$ has no cofinal branches and $\betrag{\TTT(\alpha)}<\kappa$ for all $\alpha<\kappa$. Finally, we say that a tree $\TTT$ of uncountable regular height $\kappa$ is a \emph{$\kappa$-Souslin tree} if the corresponding partial order $\PPP_\TTT=\langle\TTT,\geq_\TTT\rangle$ satisfies the $\kappa$-chain condition.

Kunen's construction starts with a preparatory forcing that causes the weak compactness of a cardinal to be indestructible with respect to forcing with the partial order $\Add{\kappa}{1}$ that adds a Cohen subset to $\kappa$. Then an $\Add{\kappa}{1}$-generic extension is split into a two-step iteration that first adds a $\kappa$-Souslin tree $\TTT$ and then adds a cofinal branch through this tree using the partial order $\PPP_\TTT$. In the intermediate model, $\kappa$ is inaccessible and non-weakly compact; whereas the preparatory forcing ensures that $\kappa$ is again weakly compact in the final extension. Moreover, this construction ensures that the partial order $\PPP_\TTT$ is ${<}\kappa$-distributive in $\VV[G]$.

\begin{proof}[Proof of Theorem \ref{theorem:KunenModelNonWeaklyCompactKnasterLayered}]
 Let $\kappa$ be a weakly compact cardinal. By the results of {\cite[Section 3]{MR495118}} mentioned above, there is a partial order $\PPP$ and a filter $G$ on $\PPP$ that is generic over $\VV$ such that $\kappa$ is inaccessible in $\VV[G]$ and there is a $\kappa$-Souslin tree $\TTT$ in $\VV[G]$ with the property that the corresponding partial order $\PPP_\TTT$ is ${<}\kappa$-distributive in $\VV[G]$ and $\kappa$ is weakly compact in every $\PPP_\TTT$-generic extension of $\VV[G]$. Let $\calF$ denote the set of all subsets $A$ of $\PP_\kappa(\kappa)$ in $\VV[G]$ such that $\mathbbm{1}_{\PPP_\TTT}\Vdash\anf{\check{A}\in\calF_{wc(\check{\kappa})}}$ holds in $\VV[G]$. Then $\calF$ is a normal filter on $\PP_\kappa(\kappa)$ in $\VV[G]$.

 Pick a $\kappa$-Knaster partial order $\QQQ$ of cardinality $\kappa$ and a surjection $\map{s}{\kappa}{\QQQ}$ in $\VV[G]$. Then $\PPP_\TTT\times\QQQ$ satisfies the $\kappa$-chain condition in $\VV[G]$. Define $$F ~ = ~ \Set{a\in\PP_\kappa(\kappa)^{\VV[G]}}{s[a]\in\mathrm{Reg}_\kappa(\QQQ)^{\VV[G]}} ~ \in ~ \PP_\kappa(\kappa)^{\VV[G]}.$$ Let $H$ be $\PPP_\TTT$-generic over $\VV[G]$. In $\VV[G,H]$, $\kappa$ is a weakly compact cardinal and $\QQQ$ satisfies the $\kappa$-chain condition. Since the partial order $\PPP_\TTT$ is ${<}\kappa$-distributive in $\VV[G]$, we know that $\PP_\kappa(\kappa)^{\VV[G]}=\PP_\kappa(\kappa)^{\VV[G,H]}$,  $\PP_\kappa(\QQQ)^{\VV[G]}=\PP_\kappa(\QQQ)^{\VV[G,H]}$ and  $\mathrm{Reg}_\kappa(\QQQ)^{\VV[G]}=\mathrm{Reg}_\kappa(\QQQ)^{\VV[G,H]}$. 
In this situation, Lemma \ref{lemma:WeaklyCompactFwcLayered} implies that $\QQQ$ is $\calF_{wc(\kappa)}^{\VV[G,H]}$-layered in $\VV[G,H]$ and we can conclude that $$F ~ = ~ \Set{a\in\PP_\kappa(\kappa)^{\VV[G,H]}}{s[a]\in\mathrm{Reg}_\kappa(\QQQ)^{\VV[G,H]}} ~ \in ~ \calF_{wc(\kappa)}^{\VV[G,H]}.$$ This argument shows that $F\in\calF$ and $\QQQ$ is $\calF$-layered in $\VV[G]$.

 Now, work in $\VV[G]$ and pick a $\kappa$-Knaster partial order $\QQQ$. By Proposition \ref{proposition:REgularClosureChainCondition}, every subset of $\QQQ$ of cardinality at most $\kappa$ is contained in a regular suborder of $\QQQ$ of cardinality $\kappa$. Since every regular suborder of $\QQQ$ is also $\kappa$-Knaster, the above computations show that every regular suborder of $\QQQ$ of cardinality $\kappa$ is $\calF$-layered.

 These computations show that every $\kappa$-Knaster partial order is completely $\calF$-layered in $\VV[G]$. By Lemma \ref{lemma:FLayeredCompleteFLayered}, this implies that every $\kappa$-Knaster partial order  is $\kappa$-stationarily layered in $\VV[G]$.

Finally, given a cardinal $\nu<\kappa$, a function $\map{\varphi}{{}^\nu\kappa}{\kappa}$ in $\VV[G]$ and a filter $H$ that is $\PPP_\TTT$-generic over $\VV[G]$, we can use the computations made in Example \ref{example:Relevant Filters}.(\ref{example:item:WCFilter}) and the fact that the partial order $\PPP_\TTT$ is ${<}\kappa$-distributive in $\VV[G]$ to conclude that $\mathrm{Cl}(\varphi)^{\VV[G]}=\mathrm{Cl}(\varphi)^{\VV[G,H]}\in\calF_{wc(\kappa)}^{\VV[G,H]}$ holds. This shows that we have $\mathrm{Cl}(\varphi)^{\VV[G]}\in\calF$ for every cardinal $\nu<\kappa$ and every function $\map{\varphi}{{}^\nu\kappa}{\kappa}$ in $\VV[G]$. In this situation, Lemma \ref{lemma:ProductivityCompletelyLayeredPosets} implies that for every $\nu<\kappa$, the class of completely $\calF$-layered partial orders is closed under $\nu$-support products in $\VV[G]$. By combining Lemma \ref{lemma:StationaryLayeredKnaster}, Lemma \ref{lemma:FLayeredCompleteFLayered} and the above computations, we know that the class of $\kappa$-Knaster partial orders coincides with the class of completely $\calF$-layered partial orders in $\VV[G]$. This yields the last statement of the theorem. 
\end{proof}

\begin{remark}
 Proposition \ref{proposition:WCyieldsCkappa} shows that the last statement listed in Theorem \ref{theorem:KunenModelNonWeaklyCompactKnasterLayered} holds when $\kappa$ is weakly compact and the theorem shows that consistently this statement holds at an inaccessible cardinal that is not weakly compact. In contrast, {\cite[Theorem 1.13]{Luecke_Specialize}} shows that this statement characterizes weak compactness in canonical inner models. 
\end{remark}


\section{Characterizations of large cardinals}\label{section:CharacterizeLarge}

This section contains the proofs of the theorems that characterize inaccessible and weakly compact cardinals in terms of layered partial orders.   We start with the characterization of inaccessible cardinals.

\begin{proof}[Proof of Theorem \ref{thm_InaccEquiv}] 
 Let $\kappa$ be an inaccessible cardinal and let $\PPP$ be a ${<}\kappa$-linked partial order. Fix a function $\map{c}{\PPP}{\lambda}$ that is injective on antichains in $\PPP$ and a regular cardinal $\theta>\kappa$ with $\PPP\in\HH{\theta}$. Let $S$ denote the collection of all elementary submodels $M$ of $\HH{\theta}$ such that $\betrag{M}<\kappa$ and $\POT{\lambda}\cup\{\PPP,c\}\subseteq M$. Then $S$ is stationary in $\PP_\kappa(\HH{\theta})$. Fix $M\in S$ and a condition $p$ in $\PPP$. Set $A=\Set{c(r)}{r\leq_\PPP p}\subseteq\lambda$. Then $A\in M$ and elementarity implies that there is a condition $q$ in $\PPP\cap M$ with $A=\Set{c(r)}{r\leq_\PPP q}$.  Let $r$ be an extension of $q$ in $\PPP\cap M$. Then there is an extension $s$ of $p$ in $\PPP$ with $c(r)=c(s)$. This shows that every extension of $q$ in $\PPP\cap M$ is compatible with $p$ in $\PPP$. We can conclude that $q$ is a reduct of $p$ into $\PPP\cap M$. This shows that $\PPP\cap M$ is a regular suborder of $\PPP$.

 In the other direction, assume toward a contradiction that $\kappa$ is regular but not strongly inaccessible and every ${<}\kappa$-linked partial order is $\kappa$-stationarily layered.  Let $\lambda<\kappa$ be minimal with $2^\lambda\geq\kappa$. Since $\kappa$ is regular, we have $2^{{<}\lambda}<\kappa$. Fix a subset $X$ of ${}^\lambda 2$ of cardinality $\kappa$ and define a partial order $\PPP$ by the following clauses. 
  \begin{enumerate}[(a)]
  \item Conditions in $\PPP$  are pairs $p=\langle s_p,a_p\rangle$ with $\map{s_p}{n_p}{{}^{{<}\lambda}2}$ for some $n_p<\omega$ and $a_p\in\left[X\right]^{{<}\omega}$. 

  \item Given conditions $p$ and $q$ in $\PPP$, we have $p\leq_\PPP q$ if and only if $s_q\subseteq s_p$, $a_q\subseteq a_p$ and $s_p(i)\nsubseteq x$ for all $x\in a_q$ and $n_q\leq i<n_p$.
 \end{enumerate}
 Since conditions in $\PPP$ with the same first component are compatible, the partial order $\PPP$ is ${<}\kappa$-linked and has cardinality $\kappa$. Let $\theta>\kappa$ be a regular cardinal with $\PPP\in\HH{\theta}$. By Lemma \ref{lemma:StationaryLayeredElementarySubmodel}, there is an elementary submodel $M$ of $\HH{\theta}$ of cardinality less than $\kappa$ such that $\PPP\in M$, ${}^{{<}\lambda}2\subseteq M$ and $\PPP\cap M$ is a regular suborder of $\PPP$. Pick $x\in X\setminus M$ and define $p=\langle\emptyset,\{x\}\rangle$. Then $p$ is a condition in $\PPP$ and there is reduct $q$ of $p$ into $\PPP\cap M$. Since $x \notin M$ and $a_q$ is a finite subset of $M$, we have $x\notin a_q$ and there is an $\alpha<\lambda$ with $x\restriction\alpha\neq y\restriction\alpha$ for every $y\in a_q$.   Define $r=\langle s_q\cup\{\langle n_q,x\restriction\alpha\rangle\},a_q\rangle$.  Notice that $x \restriction \alpha$ is an element of $M$ because ${}^{{<}\lambda}2\subseteq M$.  Then $r$ is an extension of $q$ in $\PPP\cap M$ and our assumptions imply that $r$ is compatible with $p$ in $\PPP$, a contradiction. 
\end{proof}

\begin{remark}
Essentially the same proof shows that the partial order $\PPP$ defined in the previous proof is not \emph{strongly proper} with respect to any stationary subset of $\PP_\kappa(\HH{\theta})$.  Here \emph{strongly proper} is in the sense of Mitchell (see \cite{MR2279659}).   
\end{remark}

To characterize weakly compact cardinals, we make use of the following well-known forcing notion that turns a tree of uncountable regular height without cofinal branches into a special tree of height $\omega_1$.

\begin{definition}\label{def_SpecializingForcing}
 Given an uncountable regular cardinal $\kappa$ and a tree $\TTT$ of height $\kappa$, we let $\PPP(\TTT)$ denote the partial order whose conditions are finite partial functions $\pmap{s}{\TTT}{\omega}{\textit{finite}}$ that are injective on chains in $\TTT$ and whose ordering is given by reversed inclusion. 
\end{definition}

The proof of the following lemma is a small variation of Baumgartner's elegant proof of {\cite[Theorem 8.2]{MR823775}}. A detailed presentation of this argument can be found in \cite{Luecke_Specialize}.

\begin{lemma}\label{lemma:Baumgartner}
 If $\kappa$ is an uncountable regular cardinal and $\TTT$ is a tree of height $\kappa$ without cofinal branches, then the partial order $\PPP(\TTT)$ satisfies the $\kappa$-chain condition. 
\end{lemma}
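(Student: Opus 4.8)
The plan is to show that $\PPP(\TTT)$ has no antichain of size $\kappa$ by a pressing-down argument on a putative large antichain, exploiting that the tree $\TTT$ has no cofinal branch. Suppose toward a contradiction that $\Set{p_\alpha}{\alpha<\kappa}$ is an antichain of conditions in $\PPP(\TTT)$. Each $p_\alpha$ is a finite partial function from $\TTT$ to $\omega$; let $d_\alpha=\dom{p_\alpha}\in[\TTT]^{{<}\omega}$. First I would apply the $\Delta$-system lemma to the finite sets $d_\alpha$: since $\kappa$ is regular and uncountable and each $d_\alpha$ is finite, there is $D\in[\kappa]^\kappa$ such that $\Set{d_\alpha}{\alpha\in D}$ forms a $\Delta$-system with some finite root $R$. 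By thinning $D$ further (using that there are only finitely many functions from the fixed finite set $R$ into $\omega$, and only finitely many order-types and finitely many coordinatewise arrangements of the finitely many points of each $d_\alpha$ relative to $R$), I may assume that all the conditions $p_\alpha$ for $\alpha\in D$ agree on $R$, that all the $d_\alpha$ have the same fixed size $n$, and that the values $p_\alpha$ assigns are arranged in a uniform pattern. After this uniformization, two conditions $p_\alpha,p_\beta$ with $\alpha,\beta\in D$ can only be incompatible because some node in $d_\alpha\setminus R$ lies on a chain (i.e.\ is comparable in $<_\TTT$) with some node in $d_\beta\setminus R$ and they are assigned the same value in $\omega$, violating injectivity on chains.

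The heart of the argument is to derive a cofinal branch from the failure of the chain condition, contradicting the hypothesis on $\TTT$. The idea is that the uniformized antichain forces, for each pair $\alpha<\beta$ in $D$, a comparability between a node of $d_\alpha\setminus R$ and a node of $d_\beta\setminus R$ carrying a clashing value. Since the off-root parts $d_\alpha\setminus R$ are pairwise disjoint and there are $\kappa$ of them, I would extract from these forced comparabilities an increasing-in-height sequence of tree nodes that are pairwise comparable, i.e.\ a chain of length $\kappa$ through $\TTT$. Concretely, after further thinning so that the off-root nodes lie at strictly increasing levels of the tree as $\alpha$ increases through $D$, the comparabilities between successive conditions in the antichain can be chained together: if the $i$-th off-root node of $p_\alpha$ is comparable to the $j$-th off-root node of $p_\beta$ for appropriate fixed coordinates $i,j$, then a Fodor/counting argument pins down the coordinates and produces a set of pairwise-comparable nodes cofinal in the levels of $\TTT$, whose downward closure is a cofinal branch. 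This contradicts the assumption that $\TTT$ has no cofinal branches.

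The main obstacle I anticipate is the bookkeeping needed to pass from ``every pair of conditions is incompatible'' to ``a single cofinal branch exists.'' The incompatibility of $p_\alpha$ and $p_\beta$ only gives \emph{some} pair of comparable off-root nodes with a clashing color, and a priori the witnessing coordinates and the witnessing level could vary with the pair $(\alpha,\beta)$. The work is to stabilize these choices: one must thin $D$ so that the same pair of coordinates witnesses incompatibility for a stationary (or at least $\kappa$-sized) collection of pairs, and arrange the off-root nodes to occupy a strictly increasing sequence of levels, so that the comparabilities align transitively into one chain rather than a branching family of incomparable chains. This is exactly the point where Baumgartner's original argument for $\aleph_1$ must be adapted: the finite-to-$\omega$ coloring together with the regularity of $\kappa$ and a pressing-down argument on the levels does the job, and the regularity of $\kappa$ is what guarantees the resulting chain is cofinal in $\TTT$ rather than bounded. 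Once the cofinal branch is produced, the contradiction is immediate, completing the proof that $\PPP(\TTT)$ satisfies the $\kappa$-chain condition.
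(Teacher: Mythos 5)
Your opening moves are fine and agree with the standard argument: the $\Delta$-system and uniformization step is correct, and so is the observation that, after it, incompatibility of $p_\alpha$ and $p_\beta$ can only be witnessed by two comparable off-root nodes that receive the same value. (Note that the paper itself does not prove this lemma: it cites Baumgartner's argument in \cite{MR823775} and the detailed presentation in \cite{Luecke_Specialize}.) The genuine gap is in the step you describe as ``bookkeeping'': passing from ``every pair of conditions is linked by a comparable pair of off-root nodes'' to a chain of size $\kappa$. That step is not bookkeeping --- it is the entire mathematical content of the lemma --- and the two tools you invoke cannot deliver it. Fixing a single coordinate pair $(i,j)$ that witnesses comparability for \emph{all} pairs from a $\kappa$-sized index set is a homogenization of a colouring of $[\kappa]^2$ into finitely many colours, i.e.\ a Ramsey-type property of $\kappa$; this is essentially weak compactness and is exactly what is unavailable here, since the lemma must hold at $\kappa=\aleph_1$ and at non-weakly compact cardinals (at $\aleph_1$ it must even apply to Souslin trees, where no construction of an uncountable chain can possibly succeed, so the argument has to end in a counting contradiction rather than an exhibited branch). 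What Fodor's lemma actually gives is only column-wise stabilization: a stationary set $S$ and a fixed pair $(i^*,j^*)$ such that for each $\beta\in S$ the set $A_\beta=\Set{\alpha<\beta}{t^\alpha_{i^*}<_\TTT t^\beta_{j^*}}$ is unbounded in $\beta$. The resulting local chains $\Set{t^\alpha_{i^*}}{\alpha\in A_\beta}$ need not cohere: for $\beta\neq\beta'$ in $S$ the sets $A_\beta$ and $A_{\beta'}$ can be disjoint, and comparability in a tree only propagates through common \emph{upper} bounds (two nodes below a common node are comparable, two nodes above a common node need not be), whereas pressing-down arguments produce common \emph{lower} data. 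So the ``transitive chaining'' you describe breaks down precisely at the point where the work has to be done.

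A secondary problem: the thinning ``so that the off-root nodes lie at strictly increasing levels'' is unjustified for the lemma as stated, because nothing bounds the width of $\TTT$; if some level of $\TTT$ has $\kappa$ many nodes, the domains of $\kappa$ many conditions can all meet that level and no level-separated subfamily of size $\kappa$ exists. (Level separation can be arranged when $\betrag{\TTT_{{<}\gamma}}<\kappa$ for all $\gamma<\kappa$, e.g.\ for $\kappa$-Aronszajn trees with $\kappa$ regular, but even then the gluing problem above remains.) The proofs in \cite{MR823775} and \cite{Luecke_Specialize} share your first two steps but then proceed by a different mechanism: one shows, using an elementary submodel $M$ with $\betrag{M}<\kappa$ and $\kappa\cap M\in\kappa$ and a condition whose off-root domain avoids $M$, that all witnessing comparabilities with conditions indexed in $M$ must point \emph{downward} into $M$ (because the set of $<_\TTT$-predecessors of any node of $M$ is an element of $M$ of size less than $\kappa$, hence a subset of $M$), and then a further combinatorial argument, for which the hypothesis that $\TTT$ has no cofinal branch is essential, turns this into a contradiction without ever homogenizing the pair colouring. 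Without an idea of this kind, your outline is not a proof.
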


The next lemma is the last ingredient missing in order to complete the proof of Theorem \ref{thm_WeakCompactEquiv}.

\begin{lemma}\label{lemma:SpecialAronszajnNotLayered}
 If $\kappa$ is an uncountable regular cardinal and $\TTT$ is a $\kappa$-Aronszajn tree, then the partial order $\PPP(\TTT)$ is not $\kappa$-stationarily layered. 
\end{lemma}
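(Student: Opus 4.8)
The plan is to exploit Lemma~\ref{lemma:StationaryLayeredElementarySubmodel} in its contrapositive form: to show $\PPP(\TTT)$ is \emph{not} $\kappa$-stationarily layered, it suffices to exhibit a single elementary submodel $M\prec\HH{\theta}$ with $\betrag{M}<\kappa$, $\kappa\cap M\in\kappa$, $\PPP(\TTT)\in M$, and yet $\PPP(\TTT)\cap M\notin\mathrm{Reg}_\kappa(\PPP(\TTT))$. Actually, since part (iii) of that lemma says the \emph{existence} of such a good $M$ already implies stationary layeredness, the right strategy is to show that \emph{every} sufficiently closed $M$ fails to yield a regular suborder; equivalently I will argue that on a club of models $M$, the suborder $\PPP(\TTT)\cap M$ is not regular. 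First I would fix a regular $\theta>\kappa$ with $\TTT,\PPP(\TTT)\in\HH{\theta}$ and consider an arbitrary $M\prec\HH{\theta}$ of size less than $\kappa$ with $\kappa\cap M=\delta\in\kappa$ and $\TTT,\PPP(\TTT)\in M$; by elementarity $\TTT\cap M=\TTT_{{<}\delta}$ (since $\TTT$ has height $\kappa$ and $\betrag{\TTT(\alpha)}<\kappa$, so each level below $\delta$ is captured) and $\PPP(\TTT)\cap M$ consists exactly of the finite injective partial functions whose domain lies in $\TTT_{{<}\delta}$.

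The key point is the \emph{Aronszajn} hypothesis, which gives branchlessness at height $\kappa$ while forcing the tree to be tall. Pick any node $t\in\TTT(\delta)$ (the $\delta$-th level is nonempty since $\height{\TTT}=\kappa>\delta$), and consider the condition $p=\{\langle t,0\rangle\}\in\PPP(\TTT)$; note $t\notin M$ because $t$ has length exactly $\delta=\kappa\cap M$. To show $\PPP(\TTT)\cap M$ is not a regular suborder of $\PPP(\TTT)$, I would produce a reduct candidate $q\in\PPP(\TTT)\cap M$ of $p$ and then construct, inside $M$, an extension $r\leq q$ in $\PPP(\TTT)\cap M$ that is incompatible with $p$ in $\PPP(\TTT)$. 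Incompatibility of two conditions in $\PPP(\TTT)$ happens precisely when their union fails to be injective on some chain, i.e. when $r$ already assigns the value $0$ to some node $s<_\TTT t$ below $t$ in the same branch. The branch $\mathrm{pred}_\TTT(t)$ below $t$ is a cofinal branch through $\TTT_{{<}\delta}$, and this is exactly the object that lives \emph{outside} $M$'s control: although each initial segment of this branch of length $<\delta$ belongs to $M$, the whole branch does not, so $M$ cannot ``see'' which nodes of $\TTT_{{<}\delta}$ lie below $t$.

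The main obstacle — and the heart of the argument — is to turn this ``$M$ cannot see the branch below $t$'' intuition into a rigorous construction of an extension $r\in\PPP(\TTT)\cap M$ conflicting with $p$. The clean way is a reduct-by-reduct analysis: let $q$ be any putative reduct of $p$ into $\PPP(\TTT)\cap M$, so every extension of $q$ inside $M$ is compatible with $p$. Using elementarity I would argue that $M$ believes there is an extension of $q$ assigning the value $0$ to a node of arbitrarily large length below some fixed branch the model can name; but since $t$ sits above the whole branch $\mathrm{pred}_\TTT(t)$ and $M$ can, by elementarity and the fact that $\TTT$ is a tree with small levels, locate a node $s\in\TTT_{{<}\delta}\cap M$ that is forced to be comparable with $t$ and gets value $0$ in some $M$-extension of $q$, this extension is incompatible with $p$ in $\PPP(\TTT)$, contradicting that $q$ is a reduct. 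Making precise \emph{which} node $s$ to choose is where the Aronszajn assumption (no cofinal branch, every level of size $<\kappa$, and height exactly $\kappa$) must be deployed carefully, and I expect the bookkeeping of chains and the injectivity constraint to be the only delicate part; the rest follows formally from Lemma~\ref{lemma:StationaryLayeredElementarySubmodel}.
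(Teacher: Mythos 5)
Your setup is exactly the paper's: fix $M\prec\HH{\theta}$ with $\betrag{M}<\kappa$, $\delta=\kappa\cap M\in\kappa$, $\TTT,\PPP(\TTT)\in M$ and $\PPP(\TTT)\cap M\in\mathrm{Reg}_\kappa(\PPP(\TTT))$, pick $t\in\TTT(\delta)$, set $p=\{\langle t,0\rangle\}$, and aim to defeat every reduct $q$ of $p$ into $\PPP(\TTT)\cap M$. But the step you yourself call ``the heart of the argument'' --- actually producing an extension $r\leq q$ in $\PPP(\TTT)\cap M$ that is incompatible with $p$ --- is left as an unresolved intuition, and the route you sketch for it does not work as stated: you propose an elementarity argument in which ``$M$ believes there is an extension of $q$ assigning the value $0$ to a node of arbitrarily large length below some fixed branch the model can name,'' but $M$ cannot name the chain $\mathrm{pred}_\TTT(t)$ (that is precisely what lies outside $M$), and no internal statement of $M$ about branches is needed. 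The construction of $r$ is carried out in $\VV$; one only needs $r$ to be an \emph{element} of $M$, which is automatic for any finite set of elements of $M$.

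Here is the missing construction. Since $q$ is a reduct of $p$, in particular $q$ itself is compatible with $p$, so $q(s)\neq 0$ for every $s\in\dom{q}$ with $s<_\TTT t$. Since $\dom{q}$ is finite, every node in it has length less than $\delta$, and $\delta$ is a limit ordinal, there is $\beta<\delta$ with $\dom{q}\subseteq\TTT_{{<}\beta}$. Because $\TTT$ is a $\kappa$-Aronszajn tree, $\betrag{\TTT(\beta)}<\kappa$; since $\TTT(\beta)\in M$ and $\beta<\kappa\cap M$, elementarity yields $\TTT(\beta)\subseteq M$. Hence the unique $u\in\TTT(\beta)$ with $u<_\TTT t$ lies in $M$ --- this is the node you were trying to ``locate,'' obtained with no reference to what $M$ knows about the branch. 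Now $r=q\cup\{\langle u,0\rangle\}$ is a condition: any $s\in\dom{q}$ comparable with $u$ has length below $\beta$, hence $s<_\TTT u<_\TTT t$, hence $q(s)\neq 0$, so injectivity on chains is preserved. Moreover $r\in\PPP(\TTT)\cap M$ (a finite subset of $M$), $r\leq_{\PPP(\TTT)} q$, and $r$ is incompatible with $p$ because $r(u)=0=p(t)$ with $u<_\TTT t$. This contradicts the reduct property of $q$. Note finally that your emphasis on branchlessness is misplaced: this lemma nowhere uses that $\TTT$ has no cofinal branch (that hypothesis is what drives Lemma~\ref{lemma:Baumgartner}, i.e.\ the $\kappa$-chain condition of $\PPP(\TTT)$); what is used here is only that $\TTT$ has height $\kappa$, so that $\TTT(\delta)\neq\emptyset$, and that its levels have size less than $\kappa$, so that $\TTT(\beta)\subseteq M$.
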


\begin{proof}
 Assume toward a contradiction that $\PPP(\TTT)$ is $\kappa$-stationarily layered. By Lemma \ref{lemma:StationaryLayeredElementarySubmodel}, there is an elementary submodel $M$ of $\HH{\kappa^+}$ such that $\betrag{M}<\kappa$, $\kappa\cap M\in\kappa$, $\PPP\in M$ and $\PPP(\TTT)\cap M\in\mathrm{Reg}_\kappa(\PPP(\TTT))$. Pick $t\in\TTT(\kappa\cap M)$ and set $p=\{\langle t,0\rangle\}$. Then $p$ is a condition in $\PPP(\TTT)$ and there is a reduct $q$ of $p$ into $\PPP(\TTT)\cap M$. Since the conditions $p$ and $q$ are compatible in $\PPP(\TTT)$, we have $q(s)\neq 0$ for all $s\in\dom{q}$ with $s<_\TTT t$. Let $\beta<\kappa$ be minimal with $\dom{q}\subseteq\TTT_{{<}\beta}$. Then $\beta<\kappa\cap M$ and elementarity implies that $\TTT(\beta)\subseteq M$, because $\TTT$ is a $\kappa$-Aronszajn tree. Let $u$ denote the unique element of $\TTT(\beta)$ with $u<_\TTT t$.  Set $r=q\cup\{\langle u,0\rangle\}$. By the above remarks, $r$ is a condition in $\PPP(\TTT)\cap M$ below $q$. This implies that the conditions $p$ and $r$ are compatible in $\PPP(\TTT)$, a contradiction. 
\end{proof}

\begin{remark}
Essentially the same proof shows that the specializing partial order $\PPP(\TTT)$ is not strongly proper with respect to any stationary subset of $\PP_\kappa(\HH{\kappa^+})$.
\end{remark}

\begin{proof}[Proof of Theorem \ref{thm_WeakCompactEquiv}]
 By Theorem \ref{theorem:WeaklyCompactEquivalentChainCondition}, if $\kappa$ is a weakly compact cardinal, then every partial order satisfying the $\kappa$-chain condition is $\kappa$-stationarily layered.

 In the converse direction, let $\kappa$ be an uncountable regular cardinal with the property that every partial order of cardinality at most $\kappa$ satisfying the $\kappa$-chain condition is $\kappa$-stationarily layered. Since ${<}\kappa$-linked partial orders satisfy the $\kappa$-chain condition, Theorem \ref{thm_InaccEquiv} implies that $\kappa$ is inaccessible.  In this situation,  a combination of Lemma \ref{lemma:Baumgartner} and Lemma \ref{lemma:SpecialAronszajnNotLayered} implies that there are no $\kappa$-Aronszajn trees. By a classical result of Erd{\H{o}}s and Tarski, we can conclude that $\kappa$ is weakly compact. 
\end{proof}


\section{Characterizations via the Knaster property}\label{section:Knaster}

In this section, we prove Theorem \ref{theorem:KnasterStationary}. The proof of this  result makes use of the following notions defined by Todor{\v{c}}evi{\'c} in {\cite[Section 4]{MR793235}} and {\cite[Section 1]{MR908147}}.

\begin{definition}[Todor{\v{c}}evi{\'c}]
 Let $\kappa$ be an uncountable regular cardinal, let $S\subseteq\kappa$ and let $\TTT$ be a tree of height $\kappa$. 
 \begin{enumerate}
   \item A map $\map{r}{\TTT\restriction S}{\TTT}$ is \emph{regressive} if $r(t)<_\TTT t$ holds for $t\in\TTT\restriction S$ that is not minimal in $\TTT$. 

  \item We say that \emph{$S$ is nonstationary with respect to $\TTT$} if there is a regressive map $\map{r}{\TTT\restriction S}{\TTT}$ with the property that for every $t\in\TTT$ there is a function $\map{c_t}{r^{{-}1}\{t\}}{\lambda_t}$ such that $\lambda_t<\kappa$ and $c_t$ is injective on $\leq_\TTT$-chains. 

   \item The tree $\TTT$ is \emph{special} if $\kappa$ is nonstationary with respect to $\TTT$. 
 \end{enumerate}
\end{definition}

By {\cite[Theorem 14]{MR793235}}, this notion of special trees generalizes the classical notion of \emph{special $\kappa^+$-trees} (trees of height $\kappa^+$ that are a union of $\kappa$-many antichains) to a concept that also makes sense for limit cardinals. Todor{\v{c}}evi{\'c} used this concept in {\cite[Theorem 1.9]{MR908147}} to provide a new characterization of Mahlo cardinals. His result shows that an inaccessible cardinal $\kappa$ is a Mahlo cardinal if and only if there are no special $\kappa$-Aronszajn trees.

\begin{proposition}\label{proposition:SSpecialNoCofinalBranches}
 Let $\kappa$ be an uncountable regular cardinal and let $\TTT$ be a tree of height $\kappa$. If there is a stationary subset $S$ of $\kappa$ that is nonstationary with respect to $\TTT$, then $\TTT$ has no cofinal branches. 
\end{proposition}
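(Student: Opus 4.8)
The plan is to prove the contrapositive: assuming $\TTT$ has a cofinal branch $B$, I will show that no stationary subset of $\kappa$ can be nonstationary with respect to $\TTT$. So fix a cofinal branch $B$ through $\TTT$ and suppose toward a contradiction that $S\subseteq\kappa$ is stationary and nonstationary with respect to $\TTT$, as witnessed by a regressive map $\map{r}{\TTT\restriction S}{\TTT}$ together with functions $\map{c_t}{r^{{-}1}\{t\}}{\lambda_t}$ with $\lambda_t<\kappa$, each injective on $\leq_\TTT$-chains.

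The key observation is that $B$ is itself a $\leq_\TTT$-chain meeting every level $\TTT(\alpha)$ for $\alpha<\kappa$ (since $B$ has order-type $\height{\TTT}=\kappa$), so $r$ restricts to a regressive map on the large set $B\cap(\TTT\restriction S)$. First I would identify the set of levels $A=\Set{\alpha<\kappa}{B\cap\TTT(\alpha)\subseteq\TTT\restriction S}$; because $B$ meets level $\alpha$ exactly when $\alpha$ is below the branch's height, the set of $\alpha\in S$ with $B\cap\TTT(\alpha)\neq\emptyset$ is essentially all of $S$, so this induced set remains stationary. For each such $\alpha$, let $b_\alpha$ be the unique element of $B$ at level $\alpha$; then $r(b_\alpha)<_\TTT b_\alpha$ lies on the branch $B$ below level $\alpha$, so the assignment $\alpha\mapsto\length{r(b_\alpha)}{\TTT}$ is a genuinely regressive ordinal function on a stationary subset of $\kappa$.

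By Fodor's lemma this function is constant on a stationary set $S'$, which means there is a fixed $t\in B$ with $r(b_\alpha)=t$ for all $\alpha\in S'$; that is, the stationarily many branch elements $\Set{b_\alpha}{\alpha\in S'}$ all lie in $r^{{-}1}\{t\}$. But these $b_\alpha$ form a $\leq_\TTT$-chain (they all lie on $B$) of cardinality $\kappa$, whereas $c_t$ maps $r^{{-}1}\{t\}$ injectively on chains into $\lambda_t<\kappa$. This forces an injection of a set of size $\kappa$ into an ordinal below $\kappa$, the desired contradiction.

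The main obstacle I anticipate is the bookkeeping needed to confirm that the set of relevant levels stays stationary after the various restrictions — one must check that $B$ really does meet cofinally (indeed every) level so that passing from $S$ to $\Set{\alpha\in S}{b_\alpha\text{ exists}}$ loses nothing, and that the regressive ordinal function derived from $r$ genuinely takes values below $\alpha$ rather than merely below $b_\alpha$ in the tree order. Once one notes that $r(b_\alpha)<_\TTT b_\alpha$ implies $\length{r(b_\alpha)}{\TTT}<\alpha$, Fodor's lemma applies cleanly, and the injectivity-on-chains hypothesis delivers the contradiction immediately.
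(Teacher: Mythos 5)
Your proof is correct and follows essentially the same route as the paper: restrict the regressive map $r$ to the branch, press down on the level of $r(b_\alpha)$ via Fodor's Lemma (using downward closure of $B$ to turn a constant level into a constant node $t$), and contradict the injectivity of $c_t$ on chains. The only cosmetic difference is that the paper applies a second pressing-down to find two chain elements with equal $c_s$-value, whereas you conclude by the cardinality clash between $\kappa$ and $\lambda_t<\kappa$; these are interchangeable finishes of the same argument.
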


\begin{proof}
 Let $\map{r}{\TTT\restriction S}{\TTT}$ and $\seq{\map{c_t}{r^{{-}1}\{t\}}{\lambda_t}}{t\in\TTT}$ witness that $S$ is nonstationary with respect to $\TTT$. Assume toward a contradiction that there is a cofinal branch $B$ through $\TTT$. Given $\alpha\in S$, let $t_\alpha$ denote the unique element of $B\cap\TTT(\alpha)$. By Fodor's Lemma, there is $E\subseteq S$ stationary in $\kappa$, $s\in\TTT$ and $\xi<\lambda_s$ such that $r(t_\alpha)=s$ and $c_s(t_\alpha)=\xi$ for all $\alpha\in E$. This is a contradiction, because $c_s$ is injective on $\Set{t_\alpha}{\alpha\in E}$. 
\end{proof}

 We now want to isolate properties of trees $\TTT$ that imply that the partial order $\PPP(\TTT)$ defined in Definition \ref{def_SpecializingForcing} is $\kappa$-Knaster.

\begin{definition}
  We say that a tree $\TTT$ of height $\nu$ \emph{does not split at limit levels} if for all $\lambda\in\nu\cap\Lim$ and all $t_0,t_1\in\TTT(\lambda)$ with $t_0\neq t_1$, we can find $\alpha<\lambda$ and $s_0,s_1\in\TTT(\alpha)$ with $s_0\neq s_1$ and $s_i<_\TTT t_i$ for all $i<2$.  
\end{definition}

The next lemma is a special case of results contained in  \cite{Luecke_Specialize} that study the properties of forcings that specialize higher Aronszajn trees.

\begin{lemma}\label{lemma:SpecialTreeKnaster}
 Let $\kappa$ be an uncountable regular cardinal and let $\TTT$ be a $\kappa$-Aronszajn tree that does not split at limit levels. If there is a stationary subset of $\kappa$ that is nonstationary with respect to $\TTT$, then the partial order $\PPP(\TTT)$  is $\kappa$-Knaster. 
\end{lemma}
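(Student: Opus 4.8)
The plan is to show that any $\kappa$-sized family of conditions in $\PPP(\TTT)$ can be refined to a $\kappa$-sized set of pairwise compatible conditions, using the regressive map witnessing nonstationarity together with a $\Delta$-system argument. First I would fix a sequence $\seq{p_\alpha}{\alpha<\kappa}$ of conditions in $\PPP(\TTT)$. Since each $p_\alpha$ is a finite partial function, by the $\Delta$-system lemma I may pass to a $\kappa$-sized subset on which the domains $\dom{p_\alpha}$ form a $\Delta$-system with some finite root $d$, all $p_\alpha$ agree on $d$, and all $p_\alpha$ have the same ``shape'' (same cardinality $m$ of domain, same pattern of $\omega$-values via the finitely many possible colourings). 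The difficulty is that two such conditions $p_\alpha, p_\beta$ fail to be compatible precisely when some $s\in\dom{p_\alpha}$ and $t\in\dom{p_\beta}$ lie on a common $\leq_\TTT$-chain (i.e.\ are $\leq_\TTT$-comparable) yet receive the same $\omega$-value; so I need to arrange that the ``tails'' $\dom{p_\alpha}\setminus d$ are pairwise $\leq_\TTT$-incomparable in the relevant sense.

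The key idea is to use the stationary set $S\subseteq\kappa$ nonstationary with respect to $\TTT$, together with the regressive map $\map{r}{\TTT\restriction S}{\TTT}$ and the chain-injective colourings $\seq{\map{c_t}{r^{{-}1}\{t\}}{\lambda_t}}{t\in\TTT}$. The plan is to associate to each condition a node high up in the tree — enumerating the nontrivial nodes of $\dom{p_\alpha}$ and choosing, for each, a node at a level in $S$ above it; since $\TTT$ is $\kappa$-Aronszajn and $S$ is stationary this is possible on a club of levels. Applying $r$ regressively to these chosen nodes gives finitely many values below, and by Fodor's lemma (applied coordinatewise, or by a pressing-down argument on the finite tuple) I can thin to a stationary, hence $\kappa$-sized, set $E$ on which all these $r$-values and $c$-values are constant. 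The chain-injectivity of the $c_t$ then forces the relevant chosen nodes for distinct $\alpha,\beta\in E$ to be pairwise $\leq_\TTT$-incomparable, and the hypothesis that $\TTT$ \emph{does not split at limit levels} will be used to propagate this incomparability down to the actual domain nodes, guaranteeing that no comparable pair of domain nodes outside the root receives a clashing value.

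Concretely, the steps in order are: (i) reduce via $\Delta$-system to a uniform family agreeing on a common root $d$ with fixed shape; (ii) attach to each condition a tuple of ``marker'' nodes at levels in $S$ lying above the non-root domain nodes; (iii) apply the regressive map $r$ and press down using Fodor to get a stationary $E$ on which markers share a common $r$-image and common $c$-colour; (iv) use chain-injectivity of $c_s$ to conclude the markers for distinct indices in $E$ are $\leq_\TTT$-incomparable; (v) use ``no splitting at limit levels'' to transfer incomparability to the genuine domain nodes and verify that for $\alpha,\beta\in E$ the conditions $p_\alpha$ and $p_\beta$ have union defining a valid condition of $\PPP(\TTT)$, i.e.\ they are compatible. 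This yields $\kappa$-many pairwise compatible conditions, establishing the $\kappa$-Knaster property.

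I expect the main obstacle to be step (v): carefully controlling how $\leq_\TTT$-comparabilities among domain nodes are ruled out. The subtlety is that compatibility of $p_\alpha,p_\beta$ requires $p_\alpha\cup p_\beta$ to remain injective on every chain, so I must ensure that whenever a node of $\dom{p_\alpha}\setminus d$ is $\leq_\TTT$-comparable to a node of $\dom{p_\beta}\setminus d$, their values differ; the ``does not split at limit levels'' hypothesis is exactly what lets me deduce incomparability of the lower domain nodes from incomparability of the higher markers, but matching up levels and the finitely many cases of which node lies below which will require bookkeeping. A secondary point to handle with care is that the pressing-down in step (iii) is over a tuple of nodes rather than a single regressive value, so I would either iterate Fodor finitely many times or encode the tuple as a single regressive function into $\TTT$, taking advantage of $\betrag{\TTT(\gamma)}<\kappa$ and the regularity of $\kappa$ to keep the fibers small.
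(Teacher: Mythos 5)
Your template (stabilize the conditions, attach auxiliary nodes at levels in $S$, press down through $r$ and the colourings $c_t$, and convert chain-injectivity into incomparability) is the right general shape, but your auxiliary nodes sit on the wrong side of the domain nodes, and this breaks the argument in two places. First, step (ii) has an existence problem: in a $\kappa$-Aronszajn tree a node need not have \emph{any} extension at higher levels, let alone at a level in $S$ --- the Aronszajn property only bounds the size of levels and forbids cofinal branches, and the tree may have dead ends; since the domain nodes are given, you cannot prune this away. Second, and more fundamentally, step (v) asks incomparability to propagate in a direction in which it never does. Step (iv) is correct as far as it goes: markers with a common $r$-image and a common colour must be pairwise incomparable. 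But if $u\in\dom{p_\alpha}$ and $v\in\dom{p_\beta}$ satisfy $u<_\TTT v$, any markers $m_u\geq_\TTT u$ and $m_v\geq_\TTT v$ can perfectly well be incomparable, since both lie above $u$ and the tree may split above $u$; so incomparability of the markers is compatible with exactly the clash $p_\alpha(u)=p_\beta(v)$ you need to exclude, and no contradiction results. The hypothesis that $\TTT$ does not split at limit levels cannot repair this: it says that distinct nodes at a limit level already differ at some earlier level, i.e.\ it separates nodes \emph{downward}; it does not (and nothing can, since the statement is false) transfer incomparability from higher nodes down to lower ones.

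The paper's proof is precisely your argument with the auxiliary nodes placed \emph{below} the domain nodes instead of above them. For $\alpha\in S\cap\Lim$ one takes the traces $t^\alpha_k$: the predecessors at level $\alpha$ of those $u\in\dom{p_\alpha}$ with $\length{u}{\TTT}\geq\alpha$. These always exist, lie in $\TTT\restriction S$ so that $r$ applies to them, and all the relevant data (the low part $\dom{p_\alpha}\cap\TTT_{{<}\alpha}$, the values $r(t^\alpha_k)$ and $c_{r(t^\alpha_k)}(t^\alpha_k)$, and a separating injection at a level $\bar{\alpha}<\alpha$ given by non-splitting at limit levels) lives below level $\alpha$, which is exactly what makes the Fodor pressing-down legitimate --- note that your setup never ties the index of a condition to a tree level, which is a further (fixable) obstacle to applying Fodor at all. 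With traces, comparability transfers the right way: if $u_0<_\TTT u_1$ with $u_0$ in the high part of $p_{\alpha_0}$ and $u_1$ in the high part of $p_{\alpha_1}$, then $t^{\alpha_0}_{k_0}\leq_\TTT u_0<_\TTT u_1$ and $t^{\alpha_1}_{k_1}\leq_\TTT u_1$ are both predecessors of $u_1$, hence comparable and distinct (they lie at different levels); the stabilized separating injection forces $k_0=k_1$ (this is the actual use of non-splitting at limit levels), so the two traces share an $r$-image and a colour, contradicting chain-injectivity. The conditions $p_\alpha$ themselves then play the role of your $\Delta$-system family, with the stabilized low part serving as the root and the high parts kept at disjoint level ranges by thinning the index set.
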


\begin{proof}
 Fix an injective sequence $\seq{p_\alpha}{\alpha<\kappa}$ of conditions in $\PPP(\TTT)$ and functions $\map{r}{\TTT\restriction S}{\TTT}$ and $\seq{\map{c_t}{r^{{-}1}\{t\}}{\lambda_t}}{t\in\TTT}$ witnessing that a stationary subset $S$ of $\kappa$ is nonstationary with respect to $\TTT$.

 Pick $\alpha\in S\cap\Lim$. Then we can find $n_\alpha<\omega$ and a bijective enumeration $\seq{t^\alpha_k}{k<n_\alpha}$ of all  $t\in\TTT(\alpha)$ with $t\leq_\TTT u$ for some $u\in\dom{p_\alpha}$. Since $\TTT$ does not split at limit levels, we can find $\bar{\alpha}<\alpha$ and an injection $\map{\iota_\alpha}{n_\alpha}{\TTT(\bar{\alpha})}$ such that $\iota_\alpha(k)<_\TTT t^\alpha_k$ for all $k<n_\alpha$. Since $\alpha\in\Lim$, there is $\rho_\alpha<\alpha$ such that $\dom{p_\alpha}\cap\TTT_{{<}\alpha}\subseteq\TTT_{{<}\rho_\alpha}$ and  $\{\iota_\alpha(k),r(t^\alpha_k)\}\subseteq\TTT_{{<}\rho_\alpha}$ for all $k<n_\alpha$.

Since $S\cap\Lim$ is stationary in $\kappa$, Fodor's Lemma yields $S_0\subseteq S\cap\Lim$ stationary in $\kappa$, $n_*<\omega$ and $\rho_*<\min(S_0)$ with $n_*=n_\alpha$ and $\rho_*=\rho_\alpha$ for all $\alpha\in S_0$. We have $\betrag{\TTT_{{<}\rho_*}}<\kappa$, because $\TTT$ is a $\kappa$-Aronszajn tree. Another application of Fodor's Lemma yields $S_1\subseteq S_0$ stationary in $\kappa$, $R\subseteq \TTT_{{<}\rho_*}$, a map $\map{d}{R}{\omega}$, an injection $\map{\iota}{n_*}{\TTT}$ and a map $\map{\bar{r}}{n_*}{\TTT}$ such that $R=\dom{p_\alpha}\cap\TTT_{{<}\alpha}$, $d=p_\alpha\restriction R$, $\iota(k)=\iota_\alpha(k)$ and $\bar{r}(k)=r(t^\alpha_k)$ for all $\alpha\in S_1$ and $k<n_*$. Since $\lambda_{\bar{r}(k)}<\kappa$ for all $k<n_*$, a final application of  Fodor's Lemma yields $S_2\subseteq S_1$ stationary in $\kappa$ and a map $\map{c}{n_*}{\kappa}$ such that $c(k)=c_{\bar{r}(k)}(t^\alpha_k)$ for all $\alpha\in S_2$ and $k<n_*$. Pick $U\subseteq S_2$ unbounded in $\kappa$ such that $\dom{p_{\alpha_0}}\subseteq\TTT_{{<}\alpha_1}$ holds for all $\alpha_0,\alpha_1\in U$ with $\alpha_0<\alpha_1$.

 Assume toward a contradiction that there are $\alpha_0,\alpha_1\in U$ such that $\alpha_0<\alpha_1$ and the conditions $p_{\alpha_0}$ and $p_{\alpha_1}$ are incompatible in $\PPP(\TTT)$. Since the above choices ensure that  $\dom{p_{\alpha_0}}\cap\dom{p_{\alpha_1}}=R$ and $p_{\alpha_0}\restriction R=p_{\alpha_1}\restriction R$ hold, we can find $u_0\in\dom{p_{\alpha_0}}\setminus\TTT_{{<}\alpha_0}$ and $u_1\in\dom{p_{\alpha_1}}\setminus\TTT_{{<}\alpha_1}$ with the property that $u_0<_\TTT u_1$ and $p_{\alpha_0}(u_0)=p_{\alpha_1}(u_1)$. Given $i<2$, there is $k_i<n_*$ with $t^{\alpha_i}_{k_i}\leq_\TTT u_i$. Since $\iota(k_0)<_\TTT t^{\alpha_0}_{k_0}\leq_\TTT u_0<_\TTT u_1$, $\iota(k_1)<_\TTT t^{\alpha_1}_{k_1}\leq_\TTT u_1$ and $\ran{\iota}\subseteq\TTT(\beta)$ for some $\beta<\kappa$, the injectivity of $\iota$ implies that $k_0=k_1$. This shows that the nodes $t^{\alpha_0}_{k_0}$ and $t^{\alpha_1}_{k_1}$ are incompatible in $\TTT$, because we have $r(t^{\alpha_0}_{k_0})=\bar{r}(k_0)=\bar{r}(k_1)=r(t^{\alpha_1}_{k_1})$ and $$c_{\bar{r}(k_0)}(t^{\alpha_0}_{k_0}) ~ = ~ c(k_0) ~ = ~ c(k_1) ~ = ~ c_{\bar{r}(k_1)}(t^{\alpha_1}_{k_1}) ~ = ~ c_{\bar{r}(k_0)}(t^{\alpha_1}_{k_1}).$$ But this yields a contradiction, because $t^{\alpha_0}_{k_0}\leq_\TTT u_0<_\TTT u_1$ and $t^{\alpha_1}_{k_1}\leq_\TTT u_1$.

 The above argument shows that the sequence $\seq{p_\alpha}{\alpha\in U}$ consists of pairwise compatible conditions in $\PPP(\TTT)$. 
\end{proof}

In order to prove Theorem \ref{theorem:KnasterStationary}, we introduce more notions defined by Todor{\v{c}}evi{\'c} in {\cite[Section 1]{MR908147}}. These concepts will allow us to combine the above results with the proof of {\cite[Theorem 1.9]{MR908147}} to derive the statement of the theorem.

\begin{definition}[Todor{\v{c}}evi{\'c}]
 Let $\kappa$ be an uncountable regular cardinal.
 \begin{enumerate}
  \item A sequence $\vec{C}=\seq{C_\alpha}{\alpha<\kappa}$ is a \emph{C-sequence of length $\kappa$} if the following statements hold for all $\alpha<\kappa$. 
   \begin{enumerate}
     \item If $\alpha$ is a limit ordinal, then $C_\alpha$ is a closed unbounded subset of $\alpha$. 

     \item If $\alpha=\bar{\alpha}+1$, then $C_\alpha=\{\bar{\alpha}\}$. 
   \end{enumerate}

  \item Let $\vec{C}=\seq{C_\alpha}{\alpha<\kappa}$ be a $C$-sequence. 
   \begin{enumerate}
    \item Given $\alpha\leq\beta<\kappa$, the \emph{walk from $\beta$ to $\alpha$ through $\vec{C}$} is the unique sequence $\langle\gamma_0,\ldots,\gamma_n\rangle$ with  $\gamma_0=\beta$, $\gamma_n=\alpha$ and $\gamma_{i+1}=\min(C_{\gamma_i}\setminus\alpha)$ for all $i<n$. In this situation, we define the \emph{full code of the walk from $\beta$ to $\alpha$ through $\vec{C}$} to be the sequence $$\rho_0^{\vec{C}}(\alpha,\beta) ~ = ~ \langle \otp{C_{\gamma_0}\cap\alpha},\ldots,\otp{C_{\gamma_{n-1}}\cap\alpha}\rangle.$$ 

    \item Given $\beta<\kappa$, we define $$\Map{\rho_0^{\vec{C}}( ~ \cdot ~ ,\beta)}{\beta+1}{{}^{{<}\omega}\kappa }{\alpha}{\rho_0^{\vec{C}}(\alpha,\beta)}.$$ 

     \item We define $\TTT(\rho_0^{\vec{C}})$ to be the tree of height $\kappa$ consisting of all functions of the form $\rho_0^{\vec{C}}( ~ \cdot ~ ,\beta)\restriction\alpha$ with $\alpha\leq\beta<\kappa$ ordered by inclusion. 
   \end{enumerate}
 \end{enumerate}
\end{definition}

Note that trees of the form $\TTT(\rho_0^{\vec{C}})$ for some $C$-sequence $\vec{C}$ do not split at limit levels, because we have $$\length{\rho_0^{\vec{C}}( ~ \cdot ~ ,\beta)\restriction\alpha}{\TTT(\rho_0^{\vec{C}})} ~ = ~ \alpha$$ for all $\alpha\leq\beta<\kappa$.

\begin{proof}[Proof of Part (i) of Theorem \ref{theorem:KnasterStationary}]
 Let $\kappa$ be an uncountable regular cardinal with the property that every $\kappa$-Knaster partial order is $\kappa$-stationarily layered. Since ${<}\kappa$-linked partial orders are $\kappa$-Knaster, Theorem \ref{thm_InaccEquiv} implies that $\kappa$ is inaccessible.  Assume, towards a contradiction, that $\kappa$ is not Mahlo. Then the proof of {\cite[Theorem 1.9]{MR908147}} shows that there is a $C$-sequence $\vec{C}$ of length $\kappa$ such that $\TTT(\rho_0^{\vec{C}})$ is a special $\kappa$-Aronszajn tree. By the above remarks, this implies that there is a special $\kappa$-Aronszajn tree $\TTT$ that does not split at limit levels. Then Lemma \ref{lemma:SpecialTreeKnaster} shows that the partial order $\PPP(\TTT)$ is $\kappa$-Knaster and hence our assumption implies that $\PPP(\TTT)$ is $\kappa$-stationarily layered. But this conclusion contradicts Lemma \ref{lemma:SpecialAronszajnNotLayered}.  
\end{proof}

In the remainder of this section, we prove the second part of Theorem \ref{theorem:KnasterStationary}. We want to construct $\kappa$-Aronszajn trees $\TTT$ with the property that the corresponding partial order $\PPP(\TTT)$ defined in Definition \ref{def_SpecializingForcing} is $\kappa$-Knaster for every inaccessible cardinal $\kappa$ that admits a non-reflecting stationary subset. The following concept will provide us with a method to construct such trees that can also be applied in the case where $\kappa$ is a Mahlo cardinal.

\begin{definition}[{\cite[Definition 8.2.3]{MR2355670}}]
 Given an uncountable regular cardinal $\kappa$, we say that a $C$-sequence $\vec{C}=\seq{C_\alpha}{\alpha<\kappa}$ \emph{avoids $S\subseteq\kappa$} if $C_\alpha\cap S=\emptyset$ holds for every limit ordinal $\alpha$ less than $\kappa$. 
\end{definition}

The proof of the following lemma is similar to the proof of {\cite[Theorem 1.9]{MR908147}}. It relies on several computations made in \cite{MR908147} that deal with the properties of full codes of walks through $C$-sequences. These computations are presented in detail in {\cite[Section 3]{MR3078820}} and we will refer to this presentation in the following proof.

\begin{lemma}\label{lemma:NonstationaryInWalksTree}
 Given an uncountable regular cardinal $\kappa$, if $\vec{C}$ is a $C$-sequence that avoids a subset $S$ of $\kappa$, then $S$ is nonstationary with respect to $\TTT(\rho_0^{\vec{C}})$.  
\end{lemma}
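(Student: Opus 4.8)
The plan is to exhibit an explicit regressive map witnessing that $S$ is nonstationary with respect to $\TTT=\TTT(\rho_0^{\vec{C}})$, built from the last-step function of the walks determined by $\vec{C}$. For $\alpha\leq\beta<\kappa$ with walk $\langle\gamma_0,\ldots,\gamma_n\rangle$ from $\beta$ to $\alpha$ through $\vec{C}$, let $\lambda(\alpha,\beta)=\max_{i<n}\max(C_{\gamma_i}\cap\alpha)$ denote Todor{\v{c}}evi{\'c}'s last-drop function. Given a non-minimal $t\in\TTT(\alpha)$ with $\alpha\in S$, I would pick $\beta\geq\alpha$ with $\rho_0^{\vec{C}}(\,\cdot\,,\beta)\restriction\alpha=t$ and set $r(t)=t\restriction\lambda(\alpha,\beta)$. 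The first thing to check is that $r$ is well-defined and genuinely regressive. Regressiveness is exactly where the hypothesis that $\vec{C}$ avoids $S$ enters: for a limit $\alpha\in S$ and any $\gamma_i$ on the walk, the set $C_{\gamma_i}$ is closed in $\gamma_i$, so if $\alpha$ were a limit point of $C_{\gamma_i}$ it would already lie in $C_{\gamma_i}$; since $\vec{C}$ avoids $S$ and $\alpha\in S$, this is impossible, whence $C_{\gamma_i}\cap\alpha$ is bounded below $\alpha$ and $\lambda(\alpha,\beta)<\alpha$. (Successor levels $\alpha\in S$ are handled trivially by mapping $t$ to its immediate predecessor.) Independence of the auxiliary $\beta$ I would derive from the coherence of $\rho_0^{\vec{C}}$: the full code $\sigma_t:=\rho_0^{\vec{C}}(\alpha,\beta)$ is the eventual common initial segment of the values $t(\xi)=\rho_0^{\vec{C}}(\xi,\beta)$ as $\xi\to\alpha$, and $\lambda(\alpha,\beta)$ is recoverable from $t$ as the point past which this stabilization occurs, so both depend on $t$ alone.

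Next I would define the colouring. For $s\in\TTT(\eta)$ in the range of $r$ and $t\in r^{-1}\{s\}$ lying at a limit level $\alpha\in S$, set $c_s(t)=\sigma_t=\rho_0^{\vec{C}}(\alpha,\beta)$. Two verifications remain, and these are the heart of the argument. First, that $c_s$ takes fewer than $\kappa$ values: since $r(t)=s$ forces $\lambda(\alpha,\beta)=\eta$, every $C_{\gamma_i}$ on the walk satisfies $C_{\gamma_i}\cap\alpha\subseteq\eta+1$, so each entry $\otp{C_{\gamma_i}\cap\alpha}$ of $\sigma_t$ is an ordinal $\leq\eta$. The finite sequences with entries below $\eta+1$ number at most $(\betrag{\eta}+\aleph_0)^{<\omega}<\kappa$, so I may re-index the range of $c_s$ into an ordinal $\lambda_s<\kappa$.

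Second, that $c_s$ is injective on $\leq_\TTT$-chains. Here I would use the coherence computations from {\cite[Section 3]{MR3078820}} in the following form: if $t'_0<_\TTT t'_1$ both lie in $r^{-1}\{s\}$, with levels $\alpha_0<\alpha_1$, then a single $\beta$ witnesses both (the witness for $t'_1$ restricts to witness $t'_0$), and $\lambda(\alpha_1,\beta)=\eta<\alpha_0$. Comparing the walks from $\beta$ to $\alpha_0$ and to $\alpha_1$, the condition $\lambda(\alpha_1,\beta)<\alpha_0$ guarantees that no $C_{\gamma_i}$ on the walk to $\alpha_1$ meets $[\alpha_0,\alpha_1)$; hence the walk from $\beta$ to $\alpha_0$ passes through $\alpha_1$ and the order-type bookkeeping factors cleanly as $\rho_0^{\vec{C}}(\alpha_0,\beta)=\rho_0^{\vec{C}}(\alpha_1,\beta){}^\frown\rho_0^{\vec{C}}(\alpha_0,\alpha_1)$. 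Since $\alpha_0<\alpha_1$ makes $\rho_0^{\vec{C}}(\alpha_0,\alpha_1)$ a nonempty sequence, $\sigma_{t'_1}$ is a proper initial segment of $\sigma_{t'_0}$, so $c_s(t'_0)\neq c_s(t'_1)$. For fibers that also contain successor-level preimages I would assign those a disjoint block of colours (there are fewer than $\kappa$ immediate successors of $s$, and they form an antichain), so that the combined $c_s$ remains injective on chains.

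The main obstacle is not the combinatorics of the reduction, which is forced by the structure above, but the correct invocation of the walk identities: the $\beta$-independence of $\sigma_t$ and $\lambda(\alpha,\beta)$ and, above all, the factorization of $\rho_0^{\vec{C}}(\alpha_0,\beta)$ triggered by $\lambda(\alpha_1,\beta)<\alpha_0$. Rather than reprove these, I would cite the detailed treatment of full codes of walks in {\cite[Section 3]{MR3078820}} (following {\cite[Section 1]{MR908147}}), extracting exactly the coherence and last-step lemmas needed and applying them to the two nodes of a chain. Together with the regressiveness secured by the avoidance hypothesis, these yield a regressive $r$ on $\TTT\restriction S$ whose fibres carry chain-injective colourings into ordinals below $\kappa$, which is precisely the assertion that $S$ is nonstationary with respect to $\TTT(\rho_0^{\vec{C}})$.
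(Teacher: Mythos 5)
Your proposal is correct, and it rests on the same core walk fact as the paper's proof --- comparable nodes whose walks have last drop below both levels have full codes related by the proper end-extension $\rho_0^{\vec{C}}(\alpha_0,\beta)=\rho_0^{\vec{C}}(\alpha_1,\beta){}^\frown\rho_0^{\vec{C}}(\alpha_0,\alpha_1)$ --- but the division of labour between the regressive map and the colourings is genuinely different. The paper fixes a bijection $\map{b}{\kappa}{{}^{{<}\omega}\kappa}$, restricts to the club $D$ of its closure points, takes $\beta$ to be the \emph{minimal} witness, and defines $r(t)=t\restriction f(\alpha,\beta)$ where $b(f(\alpha,\beta))=\rho_0^{\vec{C}}(\alpha,\beta)$; the full code is thereby encoded into the level of $r(t)$, the fibres of $r$ become antichains in $\TTT$ (so the colourings $c_t$ are trivial), and the reduction from $S$ to $D\cap S$ is justified by citing that the sets nonstationary with respect to $\TTT$ form a normal ideal ({\cite[Theorem 13]{MR793235}}). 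You instead keep the regressive map canonical, $r(t)=t\restriction\lambda(\alpha,\beta)$, and let the colourings carry the code. This trades the bijection trick and the normal-ideal reduction for two ingredients the paper never needs: the witness-independence of $\sigma_t$ and $\lambda(\alpha,\beta)$ (true, by exactly the stabilization argument you sketch), and the count that codes with entries bounded by the level of $s$ number fewer than $\kappa$. Both check out, and your single-case chain-injectivity argument is arguably cleaner than the paper's contradiction, which splits into the cases $\alpha_0=\beta_0$ and $\alpha_0<\beta_0$ precisely because minimal witnesses for the two nodes need not coincide, and which invokes {\cite[Lemma 3.4]{MR3078820}} and {\cite[Proposition 3.6]{MR3078820}} separately.

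One small repair: your parenthetical claim that $s$ has fewer than $\kappa$ immediate successors is not justified for an arbitrary uncountable regular $\kappa$ --- the levels of $\TTT(\rho_0^{\vec{C}})$ are provably of size less than $\kappa$ only under inaccessibility (cf. {\cite[Proposition 4.2]{MR3078820}}), and the present lemma is stated for all uncountable regular $\kappa$. But the claim is also unnecessary: the successor-level nodes in a fibre all lie on the single level $\eta+1$ and hence form an antichain, so injectivity on $\leq_\TTT$-chains is preserved by giving that entire block one single colour, which keeps $\lambda_s<\kappa$ without any counting.
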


\begin{proof}
 Fix a bijection $\map{b}{\kappa}{{}^{{<}\omega}\kappa}$ and let $D$ be the club of all limit ordinals $\alpha<\kappa$ with $b[\alpha]={}^{{<}\omega}\alpha$. Let $\vec{C}=\seq{C_\alpha}{\alpha<\kappa}$ be a $C$-sequence that avoids a subset $S$ of $\kappa$ and let $\TTT$ denote $\TTT(\rho_0^{\vec{C}})$. Since {\cite[Theorem 13]{MR793235}} shows that the collection of subsets of $\kappa$ that are nonstationary with respect to $\TTT$ is a normal ideal on $\kappa$, it suffices to show that the set $D\cap S$ is nonstationary with respect to $\TTT$ to derive the statement of the lemma.

 Pick $\alpha\in D\cap S$ and $\alpha\leq\beta<\kappa$. Let $\langle\gamma_0,\ldots,\gamma_n\rangle$ denote the walk from $\beta$ to $\alpha$ through $\vec{C}$. Since $\vec{C}$ avoids $S$ and $\alpha\in S$ is a limit ordinal, we can conclude that for all $i<n$, the set  $C_{\gamma_i}\cap\alpha$ is bounded in $\alpha$ and $\otp{C_{\gamma_i}\cap\alpha}<\alpha$ holds. This shows that $\rho_0^{\vec{C}}(\alpha,\beta)\in{}^{{<}\omega}\alpha$ and, by our assumptions on $\alpha$, there is $f(\alpha,\beta)<\alpha$ with $b(f(\alpha,\beta))=\rho_0^{\vec{C}}(\alpha,\beta)$.

Now, pick a node $t$ in $\TTT\restriction(D\cap S)$. Let $\alpha=\length{t}{\TTT}\in D\cap S$ and let $\alpha\leq\beta<\kappa$ be minimal with $t=\rho_0^{\vec{C}}( ~ \cdot ~ ,\beta)\restriction\alpha$. In this situation, we define $$r(t) ~ = ~ \rho_0^{\vec{C}}( ~ \cdot ~ ,\beta)\restriction f(\alpha,\beta) ~ <_{\TTT} ~ t.$$ This yields a regressive function $\map{r}{\TTT\restriction(D\cap S)}{\TTT}$.  Assume toward a contradiction that there are $t_0,t_1\in\TTT\restriction(D\cap S)$ with $t_0<_\TTT t_1$ and $r(t_0)=r(t_1)$. Given $i<2$, let $\alpha_i=\length{t_i}{\TTT}$ and let $\alpha_i\leq\beta_i<\kappa$ be minimal with $t_i=\rho_0^{\vec{C}}( ~ \cdot ~ ,\beta_i)\restriction\alpha_i$. Then $\alpha_0<\alpha_1$, $\rho_0^{\vec{C}}(\alpha_0,\beta_0)=\rho_0^{\vec{C}}(\alpha_1,\beta_1)$ and $\rho_0^{\vec{C}}(\xi,\beta_0)=\rho_0^{\vec{C}}(\xi,\beta_1)$ for all $\xi<\alpha_0$.

First, assume that $\alpha_0=\beta_0$. Then $\emptyset=\rho_0^{\vec{C}}(\alpha_0,\beta_0)=\rho_0^{\vec{C}}(\alpha_1,\beta_1)$ and we have $\alpha_1=\beta_1$. Since $\vec{C}$ avoids $S$, we have $\max(C_{\alpha_1}\cap\alpha_0)<\alpha_0$ and there is $\xi\in C_{\alpha_0}$ with $\max(C_{\alpha_1}\cap\alpha_0)<\xi<\alpha_0$. By our assumptions, we have $$\langle\otp{C_{\alpha_0}\cap\xi}\rangle ~ = ~ \rho_0^{\vec{C}}(\xi,\alpha_0) ~ = ~ \rho_0^{\vec{C}}(\xi,\alpha_1).$$ and the sequence $\langle\alpha_1,\xi\rangle$ is the walk from $\alpha_1$ to $\xi$ through $\vec{C}$. But this implies that $\xi\in C_{\alpha_1}$, a contradiction.

 Now, assume that $\alpha_0<\beta_0$. Given $i<2$, let $\langle\gamma^i_0,\ldots,\gamma^i_{n_i}\rangle$ denote the walk from $\beta_i$ to $\alpha_i$ through $\vec{C}$. Then $\rho_0^{\vec{C}}(\alpha_0,\beta_0)=\rho_0^{\vec{C}}(\alpha_1,\beta_1)$ implies that $n_0=n_1>0$ and $\alpha_1<\beta_1$. Since $\vec{C}$ avoids $S$, we have  $\sup(C_{\gamma^i_{n_i-1}}\cap \alpha_0)<\alpha_0$ for all $i<2$. Together with our assumptions, this allows us to use {\cite[Proposition 3.6]{MR3078820}} to conclude that $$\rho_0^{\vec{C}}(\alpha_0,\beta_0) ~ = ~ \rho_0^{\vec{C}}(\alpha_0,\beta_1)$$ holds. Moreover, since $\alpha_0<\alpha_1<\beta_1$, we can apply {\cite[Lemma 3.4]{MR3078820}} to see that $$\rho_0^{\vec{C}}(\alpha_0,\beta_1) ~ \neq ~ \rho_0^{\vec{C}}(\alpha_1,\beta_1)$$ holds. Together with our assumptions, this yields $$\rho_0^{\vec{C}}(\alpha_0,\beta_0) ~ = ~ \rho_0^{\vec{C}}(\alpha_0,\beta_1)~ \neq ~ \rho_0^{\vec{C}}(\alpha_1,\beta_1) ~ = ~ \rho_0^{\vec{C}}(\alpha_0,\beta_0),$$ a contradiction.

By the above computations, the regressive function $\map{r}{\TTT\restriction(D\cap S)}{\TTT}$ witnesses that the set $D\cap S$ is nonstationary with respect to $\TTT$. 
\end{proof}

By the above lemma, it is possible to construct trees with the desired  properties from a $C$-sequences $\vec{C}$ with the properties that $\TTT(\rho_0^{\vec{C}})$ is a $\kappa$-Aronszajn tree and $\vec{C}$ avoids a stationary subset of $\kappa$. The following proof shows that the sequence induced by a non-reflecting subset has these properties.

\begin{proof}[Proof of Part (ii) of Theorem \ref{theorem:KnasterStationary}]
 Let $\kappa$ be an uncountable regular cardinal with the property that every $\kappa$-Knaster partial order of cardinality at most $\kappa$ is $\kappa$-stationarily layered. Then Theorem \ref{thm_InaccEquiv} implies that $\kappa$ is inaccessible.  Assume  toward a contradiction that there is a stationary subset $S$ of $\kappa$ that does not reflect. Then we may assume that $S$ consists of limit ordinals and, by picking counterexamples to reflection at every limit ordinal less than $\kappa$, we can construct a $C$-sequence $\vec{C}=\seq{C_\alpha}{\alpha<\kappa}$ that avoids $S$. Set $\TTT=\TTT(\rho_0^{\vec{C}})$. Then Lemma \ref{lemma:NonstationaryInWalksTree} shows that there is a stationary subset of $\kappa$ that is nonstationary with respect to $\TTT$. By Proposition \ref{proposition:SSpecialNoCofinalBranches}, this implies that $\TTT$ has no cofinal branches. Since $\kappa$ is inaccessible, we can use {\cite[Proposition 4.2]{MR3078820}} to conclude that $\betrag{\TTT(\alpha)}<\kappa$ holds for all $\alpha<\kappa$. This shows that $\TTT$ is a $\kappa$-Aronszajn tree. By the above computations, Lemma \ref{lemma:SpecialTreeKnaster} implies that the partial order $\PPP(\TTT)$ is $\kappa$-Knaster. But Lemma \ref{lemma:SpecialAronszajnNotLayered} shows that $\PPP(\TTT)$ is not $\kappa$-stationarily layered, a contradiction.  
\end{proof}


\section{Questions and concluding remarks}\label{sec_Questions}

We conclude with questions raised by the results of this paper.

\begin{question}
 Assume that $\kappa$ is an inaccessible cardinal with the property that every $\kappa$-Knaster partial order is $\kappa$-stationarily layered. Is $\kappa$ weakly compact in $\LL$?  
\end{question}

Theorem \ref{theorem:KnasterStationary} shows that cardinals with this property are \emph{reflection cardinals} and, by {\cite[Theorem 3]{MR1029909}}, they are reflection cardinals in $\LL$. Moreover,  Corollary \ref{corollary:KnasterLayeredExtenderModels} shows that in $\LL$, the above assumption is equivalent to weak compactness. In contrast, Theorem \ref{theorem:KunenModelNonWeaklyCompactKnasterLayered} shows that the assumption does not imply weak compactness in $\VV$. The following question mentions the obvious strategy to derive a positive answer to the above question.

\begin{question}\label{q_KnasterSquare}
 Let $\kappa$ be an inaccessible cardinal. Does the existence of a  $\square(\kappa)$-sequence (see {\cite[Section 1]{MR908147}}) imply the existence of a $\kappa$-Knaster partial order that is not $\kappa$-stationarily layered? 
\end{question}

The results of this paper show that inaccessiblity and weak compactness are provably equivalent to statements about the stationary layeredness of certain partial orders. It is natural to ask whether other small large cardinal properties can be described in this way.

\begin{question}
Are there other natural instances of pairs $(\Phi,\Gamma)$ with 
\begin{itemize}
 \item $\Phi(\kappa)$ is a large cardinal property weaker than weak compactness of $\kappa$, and 

 \item $\Gamma(\kappa)$ is a class of partial orders satisfying the $\kappa$-chain condition. 
\end{itemize}
so that $\ZFC$ proves that for every inaccessible cardinal $\kappa$, the statement $\Phi(\kappa)$ is equivalent to the statement that every partial order in $\Gamma(\kappa)$ is $\kappa$-stationarily layered.  

In particular, is there a class of partial orders satisfying the $\kappa$-chain condition that corresponds to Mahlo cardinals in this way? 
\end{question}

The above questions deal with characterizations of consequences of weak compactness. In the light of Theorem \ref{theorem:SupercompactULayered}, it is natural to ask whether stronger large cardinal properties can be described by similar results.

\begin{question}\label{q_IsKappaMeasurable}
 Let $\kappa$ be an inaccessible cardinal such that there is a normal filter $\calF$ on $\PP_\kappa(\kappa^+)$ with the property that every partial order of cardinality $\kappa^+$ that satisfies the $\kappa$-chain condition is $\calF$-layered.  Must $\kappa$ be a measurable cardinal?
\end{question}

In the setting of the above question, we have $\kappa^+=(\kappa^+)^{{<}\kappa}$ and, by Proposition \ref{proposition:REgularClosureChainCondition} and Lemma \ref{lemma:FLayeredCompleteFLayered}, every partial order satisfying the $\kappa$-chain condition is completely $\calF$-layered and therefore $\kappa$-stationarily layered. In particular, Theorem \ref{thm_WeakCompactEquiv} shows that the above assumptions on $\kappa$ imply that $\kappa$ is weakly compact.

The characterization of weakly compact and Mahlo cardinals through the non-existence of certain trees is crucial for the above proofs. Since stronger large cardinal properties can be described by the non-existence of certain $\PP_\kappa(\lambda)$-trees (see \cite{MR0325397} and \cite{MR0327518}), we may ask whether similar arguments could work for these properties. This approach leads to the question whether analogs of the specialization forcing $\PPP(\TTT)$ exist for $\PP_\kappa(\lambda)$-trees.

\begin{question}
 Given an inaccessible cardinal $\kappa$, a cardinal $\lambda>\kappa$ and a normal filter $\calF$ on $\PP_\kappa(\lambda)$, 
 does the existence of a $\PP_\kappa(\lambda)$-tree without cofinal branches imply the existence of a partial order of cardinality $\lambda$ that satisfies the $\kappa$-chain condition and is not $\calF$-layered? 
\end{question}

\begin{figure}[ht]\label{fig_Implications}
 \begin{displaymath}
  \xymatrix{
     & \kappa\text{-c.c. is productive} \ar@{-->}[dl]|{ \textcircled{\tiny ?} }   \ar[dr] & \\
    \txt{$\kappa$ is weakly \\ compact} \ar@{<->}[r] & \exists \mathcal{F} ~ \left[\text{$\kappa$-c.c.} \subseteq \text{$\mathcal{F}$-layered}\right] \ar[u] \ar@{<->}[d] & \neg \square(\kappa) \ar[d] \\ 
    & \txt{$\kappa$-c.c. \\ $\subseteq$ $\kappa$-stat.-layered} \ar[d] & \txt{$\kappa$ is weakly  \\ compact in $\LL$} \\ 
   \txt{$\kappa$ is Mahlo and \\ stat. sets reflect} \ar@{<->}[uu]|{\text{If $\VV=\LL[E]$}} \ar[d] &  \txt{$\kappa$-Knaster \\ $\subseteq$ $\kappa$-stat.-layered} \ar@{-->}[uur]|{\textcircled{\tiny ?}} \ar[d] \ar[l]  \ar[uul]|{\textcircled{\small \sf{x}}}  \ar@{-->}[ur]|{\textcircled{\tiny ?}} & \\
\txt{$\kappa$ is strongly \\ inaccessible} \ar@{<->}[r] & \txt{${<}\kappa$-linked \\ $\subseteq$ $\kappa$-stat.-layered} & \\
 }
\end{displaymath}
\caption{Summary of results and open questions.}
\label{figure:ResultsQuestions}
\end{figure}

Figure \ref{figure:ResultsQuestions} summarizes some of the main theorems and open questions related to the topics of this paper.  An arrow in the diagram represents an implication; a crossed-out arrow represents a non-implication; and a dashed arrow with a question mark represents an open problem.  The motivating problem -- whether productivity of the $\kappa$-chain condition implies the weak compactness of $\kappa$ -- appeared in Todor{\v{c}}evi{\'c}~\cite{MR2355670}, the failure of $\square(\kappa)$ from the productivity of the $\kappa$-chain condition is due to Rinot~\cite{MR3271280}, the implication from a failure of $\square(\kappa)$ to the weak compactness of $\kappa$ in $\LL$ is a consequence of results of Jensen~\cite{MR0309729} and the equivalence of stationary reflection and weak compactness in canonical inner models $\LL[E]$ is due to Jensen~\cite{MR0309729} and Zeman~\cite{MR2563821}. All other implications and non-implications in the diagram are either trivial, or are proved in the current paper.  Note that the 3-way equivalence between ``$\kappa$ is weakly compact", ``$\exists \mathcal{F} ~ \left[\text{$\kappa$-c.c.} \subseteq \text{$\mathcal{F}$-layered}\right]$", and ``$\kappa$-c.c. $\subseteq \kappa$-stat.-layered" follows from Theorems \ref{theorem:WeaklyCompactEquivalentChainCondition} and \ref{thm_WeakCompactEquiv}.


\bibliographystyle{plain}
\bibliography{references}

\end{document}